

%
%
\documentclass[preprint,3p,times]{elsarticle}


\usepackage[english]{babel}
\usepackage[utf8]{inputenc}
\usepackage{pdflscape} 
\usepackage{amssymb,amsmath}
\usepackage{amsthm}
\usepackage{mathtools}
\usepackage[normalem]{ulem}
\usepackage{enumitem}
\usepackage[dvipsnames,svgnames]{xcolor}
\usepackage{array}
\usepackage{multirow}

\usepackage{afterpage} 

\usepackage[nodots]{numcompress} 

\usepackage{graphicx}
\usepackage[FIGBOTCAP,TABTOPCAP]{subfigure} 
\usepackage{tikz}
\usetikzlibrary{calc,shapes,arrows,backgrounds,patterns}
\tikzstyle{every picture}+=[remember picture]
\usepackage{pgfplots} 

\usepackage{epstopdf} 
\epstopdfsetup{suffix=} 
%

\usepackage{hyperref}
\urlstyle{same}
\hypersetup{colorlinks=true,linkcolor=blue,citecolor=blue}
\newcommand{\myhy}[2]{\hyperref[#1]{\color{blue}\setulcolor{blue}\ul{#2}}}

%
\usepackage[all]{hypcap}

\mathtoolsset{showonlyrefs} 
\allowdisplaybreaks 
\biboptions{sort&compress,numbers}


\usepackage{xspace}
\makeatletter
\DeclareRobustCommand\onedot{\futurelet\@let@token\@onedot}
\newcommand{\@onedot}{\ifx\@let@token.\else.\null\fi\xspace}
\newcommand{\eg}{{e.g}\onedot} 
\newcommand{\ie}{{i.e}\onedot}

\makeatother

\newtheorem{prop}{Proposition}
\newtheorem{rem}{Remark}

\newtheorem{lem}{Lemma}

\theoremstyle{definition}


\newcommand{\RR}{\mathbb{R}}

\newcommand{\vectbi}[1]{\boldsymbol{#1}} 
\newcommand{\matri}[1]{#1}               
\newcommand{\vect}[1]{\vectbi{#1}}
\newcommand{\matr}[1]{\matri{#1}}

\newcommand{\norm}[1]{\left\lVert#1\right\rVert}


\renewcommand{\leq}{\leqslant}
\renewcommand{\geq}{\geqslant}

\newcommand{\dr}[1]{\frac{\partial^r}{\partial {#1}^r}}

\newcommand{\dq}[1]{\frac{\partial^q}{\partial {#1}^q}}
\newcommand{\drq}[1]{\frac{\partial^{r-q}}{\partial {#1}^{r-q}}}


\newcommand{\bF}{\vect{F}}
\newcommand{\bH}{\vect{H}}
\newcommand{\bP}{\vect{P}}
\newcommand{\bS}{\vect{S}}
\newcommand{\bT}{\vect{T}}

\newcommand{\bd}{\vect{d}}
\newcommand{\be}{\vect{e}}
\newcommand{\bff}{\vect{f}}

\newcommand{\bn}{\vect{n}}
\newcommand{\bp}{\vect{p}}
\newcommand{\bq}{\vect{q}}
\newcommand{\br}{\vect{r}}

\newcommand{\bw}{\vect{w}}
\newcommand{\bgamma}{\vect{\gamma}}

\newcommand{\bTau}{\vect{T}}

\newcommand{\bxi}{\vect{\xi}}

\newcommand{\btau}{\vect{\tau}}
\newcommand{\bchi}{\vect{\chi}}

\newcommand{\blambda}{\vect{\lambda}}


\newcommand{\qtext}[1]{``#1''}

\definecolor{colspline}{rgb}{0.32,0.66,1.0}
\definecolor{colcd}{rgb}{0.86,0.08,0.23}
\definecolor{colcd2}{rgb}{0.19,0.8,0.19}

\newlength{\casesvsep}
\setlength{\casesvsep}{2ex}


\newcommand{\figpath}{figure}
\graphicspath{{\figpath/}}


\makeatletter
\newcommand*{\shifttext}[2]{%
\settowidth{\@tempdima}{#2}%
\makebox[\@tempdima]{\hspace*{#1}#2}%
}
\makeatother





\hyphenation{pa-ra-me-tri-za-tion piece-wise pol-y-no-mi-al}

\newcommand{\apptext}{}


\makeatletter
\providecommand{\doi}[1]{%
  \begingroup
    \let\bibinfo\@secondoftwo
    \urlstyle{rm}%
    \href{http://dx.doi.org/#1}{%
      doi:\discretionary{}{}{}%
      \nolinkurl{#1}%
    }%
  \endgroup
}
\makeatother

\newcommand{\class}{$D^gC^kP^mS^w$}

\makeatletter
\def\ps@pprintTitle{%
  \let\@oddhead\@empty
  \let\@evenhead\@empty
  \def\@oddfoot{\reset@font\hfil\thepage\hfil}
  \let\@evenfoot\@oddfoot
}
\makeatother

\begin{document}

\newcommand{\abstracttext}{
In CAGD the design of a surface that interpolates an arbitrary quadrilateral mesh is definitely a challenging task.
The basic requirement is to satisfy both criteria concerning the regularity of the surface and aesthetic concepts.

With regard to the aesthetic quality, it is well known that interpolatory methods often produce shape artifacts when the data points are unevenly spaced. In the univariate setting, this problem can be overcome, or at least mitigated, by exploiting a proper non-uniform parametrization, that accounts for the geometry of the data. Moreover, recently, the same principle has been generalized and proven to be effective in the context of bivariate interpolatory subdivision schemes.

In this paper, we propose a construction for parametric surfaces of good aesthetic quality and high smoothness that interpolate quadrilateral meshes of arbitrary topology.

In the classical tensor product setting the same parameter interval must be shared by an entire row or column of mesh edges. Conversely, in this paper, we assign a different parameter interval to each edge of the mesh.
This particular structure, which we call an \emph{augmented parametrization}, allows us to interpolate each section polyline of the mesh at parameters values that prevent wiggling of the resulting curve or other interpolation artifacts. This yields high quality interpolatory surfaces.

The proposed surfaces are a generalization of the local univariate spline interpolants introduced in Beccari et al. (2013) and  Antonelli et al. (2014), that can have arbitrary continuity and arbitrary order of polynomial reproduction.
In particular, these surfaces retain the same smoothness of the underlying class of univariate splines in the regular regions of the mesh (where, locally, all vertices have valence 4). Moreover, in mesh regions containing vertices of valence other than 4, we suitably define $G^1$- or $G^2$-continuous surface patches that join the neighboring regular ones.
}

\newcommand{\titletext}{
High quality local interpolation by composite parametric surfaces
}
\newcommand{\titlerunningtext}{\dots}

\begin{frontmatter}

\title{\titletext}

\author[label1]{Michele Antonelli}
\ead{antonelm@math.unipd.it}
\author[label2]{Carolina Vittoria Beccari}
\ead{carolina.beccari2@unibo.it}
\author[label2]{Giulio Casciola}
\ead{giulio.casciola@unibo.it}


\address[label1]{Department of Mathematics, University of Padova,
Via Trieste 63, 35121 Padova, Italy}
\address[label2]{Department of Mathematics, University of Bologna,
Piazza di Porta San Donato 5, 40126 Bologna, Italy}

\begin{abstract}
\abstracttext
\end{abstract}

\begin{keyword}
Quadrilateral mesh \sep Local interpolation \sep Non-uniform parametrization \sep Surface \sep Arbitrary topology \sep Curve network

\MSC[2010] 65D05 \sep 65D07 \sep 65D17
\end{keyword}

\end{frontmatter}


\section{Introduction}
\label{sec:intro}
A central topic in computer-aided geometric design is the construction of parametric curves and surfaces that interpolate a given set of points. In 2D, these points are the vertices of a control polygon, whereas in 3D they are the vertices of a control polyhedron (also called a mesh).
In this context, a \qtext{good} interpolant is one that faithfully mimics the shape suggested by the input data: this means, \eg,  that it does not present self intersections, nor it bends to much or it is too tight with respect to the given control polygon or polyhedron.

Concerning univariate interpolation, it is well known that a proper choice of parametrization is crucial to obtain good quality curves that interpolate unevenly spaced data (basic references are \cite{Farin2002a,Farin2002b}).
In particular, a uniform parametrization may give rise to noticeable interpolation artifacts, since it does not account for the geometry of the data. These artifacts often disappear, or are greatly mitigated, when a suitable non-uniform parametrization is used (Figure \ref{fig:cfr_param} is an example).

Although there is probably no \qtext{best} parametrization, since any method can be defeated by a suitably chosen data set, some techniques produce good results in the majority of critical cases.
One of the most effective \cite{Lee1989,Floater2008,Yuksel2011}, and probably the most used, is the centripetal parametrization. Alongside this, it is worth mentioning the Nielson-Foley parametization \cite{Foley-Nielson1989} and the recent method proposed in \cite{Fang2013}, which, in some cases, shows better performance than either of the previous.
\begin{figure}[t]
\centering
\subfigure[Uniform parametrization]{\includegraphics[width=0.3\textwidth]{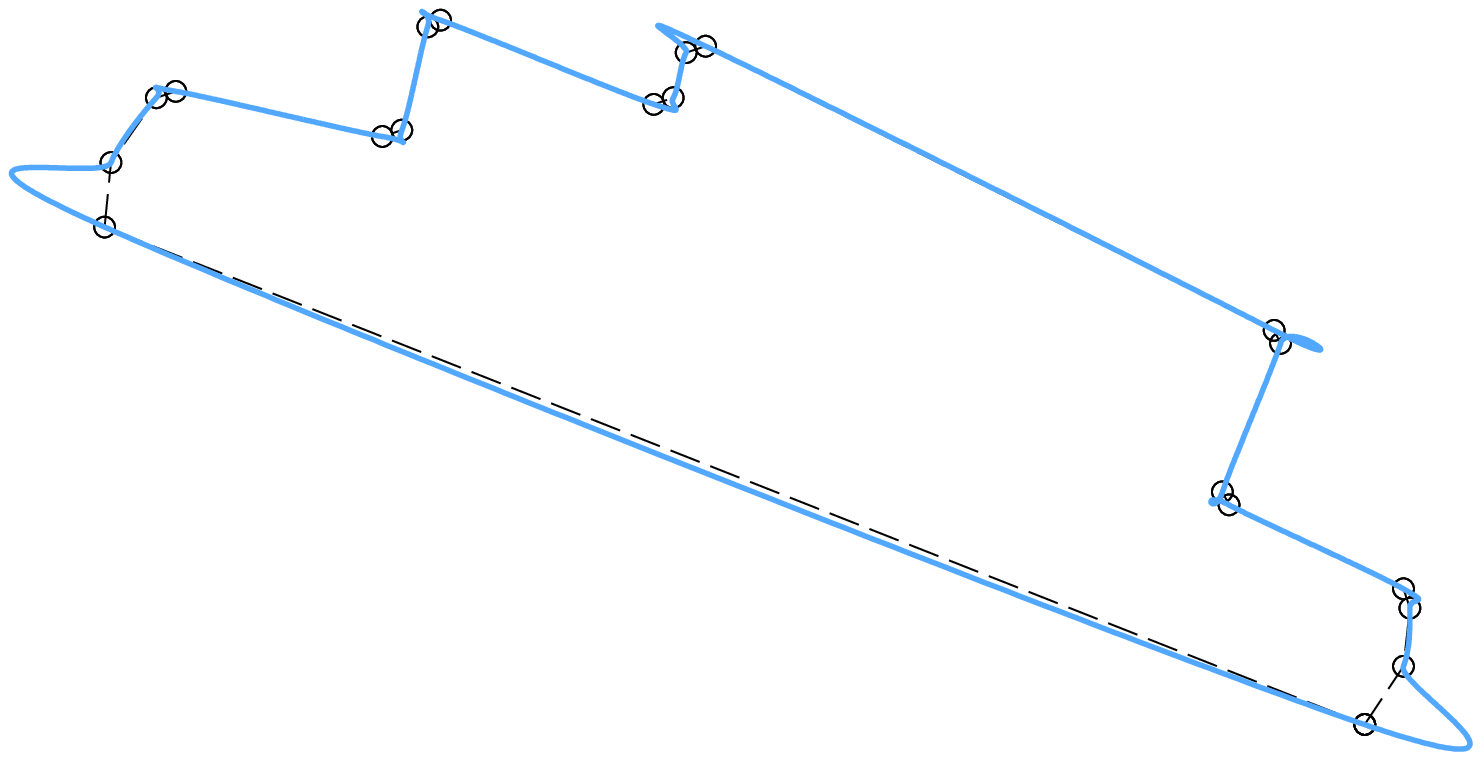}}
\subfigure[Centripetal parametrization]{\includegraphics[width=0.3\textwidth]{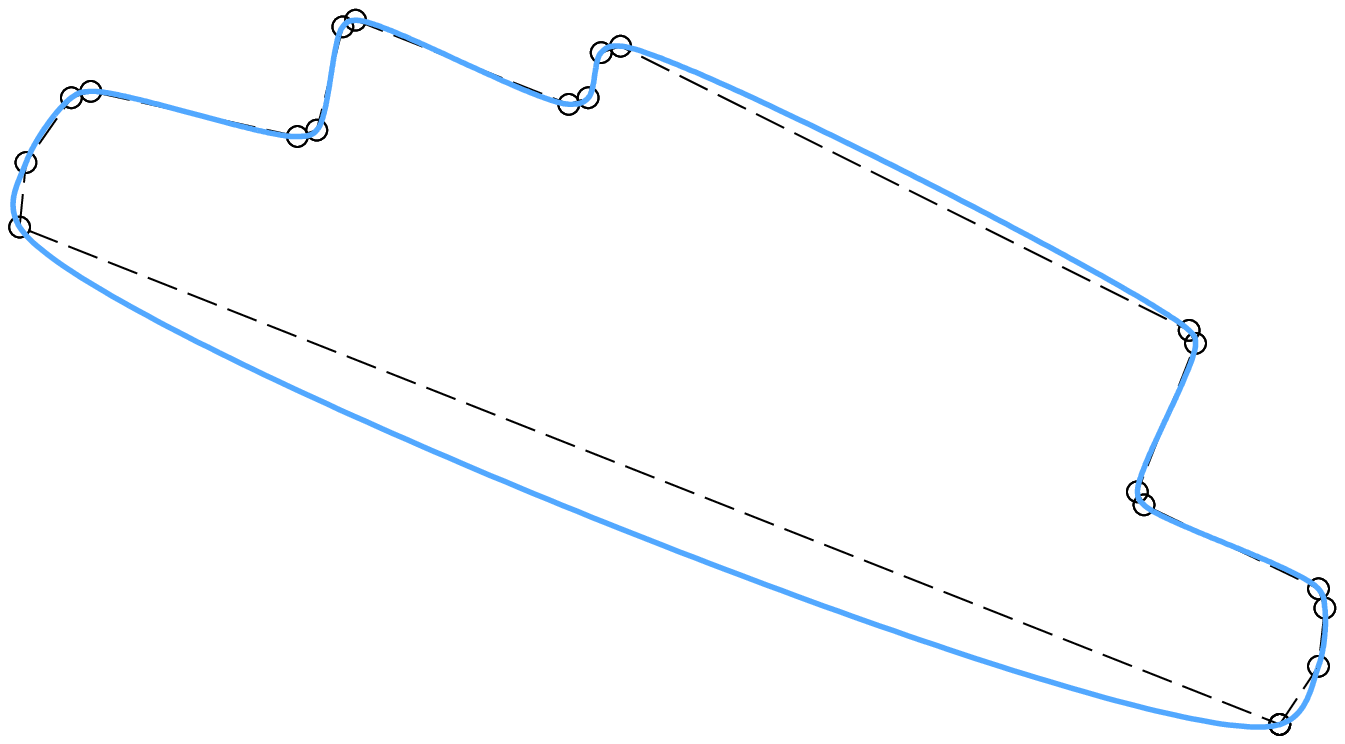}}
\caption{Cubic (global) spline interpolation of unevenly spaced data. The uniform parametrization fails whereas a proper non-uniform parametrization yields a good interpolant.}
\label{fig:cfr_param}
\end{figure}

The possibility to manage a non-uniform parametrization is even more crucial when using local interpolation methods.
The latter are more efficient than global approaches, because they do not
involve solving systems of linear equations, which can readily be of large dimension when complex surfaces have to be generated. Moreover, local modification of the data can be handled by local updating of the interpolation model and does not require recomputing the entire model from scratch.
However, local interpolation techniques are more prone to generating interpolation artifacts than global ones and therefore they suffer even more from a poor choice of parametrization.

The construction of local spline interpolants with specific properties and non-uniform parametrization has long been an open question.
This is probably one of the reasons why, so far, local interpolation methods have had low uptake in computer design applications.
In the univariate setting, two recent works have developed a general approach for solving the problem \cite{BCR13a,ABC13}.
The technique proposed in those papers allows us to choose an arbitrary support width and, correspondingly, construct various classes of local spline interpolants that differ one from another in their degree, continuity and approximation order.
In this way, one can pick the interpolating spline whose properties best suit the context of application.
Moreover, the popular Catmull-Rom splines \cite{Catmull-Rom1974}, as well as other types of local interpolants previously appeared (a non-exhaustive list includes \cite{CdV96,BTU03,BS03,UTO07,Han2011}) are special instances of the construction.

In this paper we generalize the univariate local interpolatory splines in \cite{BCR13a,ABC13} to the bivariate setting. In particular, by introducing an appropriate new parametrization technique, we will obtain parametric surfaces of good aesthetic quality and high smoothness.

We shall start by considering the case of a regular mesh, which is one where each vertex has exactly 4 incident edges. The main hurdle is represented by the need of defining a non-uniform parametrization that preserves the quality of the local interpolants, when generalizing from the univariate to the bivariate setting.
The standard approach is to construct a tensor product surface from the univariate splines that one wishes to use.
Being $uv$ the parametric domain,
the tensor product requires the generation of one set of parameter values for
all isoparametric curves in the $u$-direction; the same holds for the $v$-direction.
For finding such a parametrization, the most reasonable way of proceeding is to create a good parametrization for each isoparametric curve using a suitable
method, such as, \eg the centripetal parametrization, and then average each of these parameterizations to yield one.
This approach will only produce acceptable results if all the isoparametric curves essentially
yield the same parametrization.
Conversely, when the data points significantly deviate from a regular grid, it will result in a poor choice in
parameters. The isoparametric curves will unnaturally wiggle, and this defect will
be reflected in the surface (see, \eg \cite[\S 7.5.1]{Farin2002b}).

The above discussion suggests that the classical idea of averaging the parametrization should be avoided.
Hence we will take another avenue. In particular we assign to each mesh edge a parameter interval, without requiring that
the same parameter interval is shared by an entire row or column of mesh edges.
We call such a parametrization an \emph{augmented parametrization}.

A similar parametrization concept has previously been exploited in the context of subdivision schemes.
It was initially proposed in connection with Catmull-Clark surfaces \cite{Sederberg1998,Cashman-Augsdorfer-Dodgson-Sabin2009,Muller2006,Muller2010}, to enrich the method with greater control over the shape of free-form objects and local shape effects such as crease edges.
In fact, the term \qtext{augmented} was firstly introduced in \cite{Muller2006} with a similar meaning to the one we use here.
More recently, an interpolatory subdivision scheme with augmented parametrization has been discussed in \cite{BCR13b}.
The scheme allows for interpolating each section polyline of the mesh at independent parameter values and therefore the most appropriate parametrization is used to construct each section curve of the limit surface.
This approach yields surfaces of far superior aesthetic quality compared to their uniform or tensor-product counterparts.
Unfortunately, it is subject to the typical issues of interpolatory subdivision schemes. In particular the resulting surfaces cannot be evaluated at arbitrary parameters and have smoothness limited to $C^1$.

The construction of the local interpolatory surfaces proposed in this paper proceeds as follows. We start by assuming that a good parametrization has been computed independently for each section polyline of the mesh using a suitable method. Accordingly, a different parameter interval is assigned to each mesh edge, which gives rise to an augmented parametrization. Next, we choose a class of local univariate spline interpolants based on the desired smoothness, order of polynomial reproduction or support width.
We then suitably construct a composite surface that contains all the interpolatory curves of this class generated from the individual section polylines of the mesh with the associated parameterizations.
In this way, each section polyline of the mesh is interpolated at the parameter values that allow the best quality of the resulting section curve. Moreover, the section curves are \qtext{feature curves} of the surface, which, in turn, is a good quality interpolant to the input mesh. We also prove that, if the underlying univariate splines are $C^k$ continuous, then the composite surface is $G^k$ (meaning that derivatives agree after suitable reparametrization \cite{PetersHandbook02}). 
As a consequence, the composite surface retains the good properties of the corresponding univariate splines.

Oppositely to global techniques, the considered approach does not require to solve large systems of linear equations and allows for local editing of the generated surfaces.
Moreover, other local methods typically require to supply additional input data, such as cross-boundary derivatives and twist vectors (an example is given by the well-known Coons patches \cite{Farin2002a}).
A major drawback is that these quantities need to be heuristically estimated and
this is likely to be one of the reasons why, to the best of our knowledge, only $G^1$ or $G^2$ Coons patches have been considered so far. Conversely, the surfaces proposed in this paper can have higher continuity.

An important consequence of the locality of the proposed construction is that the method can be extended to handle meshes containing extraordinary vertices (namely vertices with valence different than 4).
Any such mesh can be partitioned into regular and extraordinary regions.
Loosely speaking, regular regions are those where all vertices are regular and therefore the local interpolation scheme can be applied.
Patching the regular regions will generate a surface with \qtext{holes} corresponding to the extraordinary regions.
Across the boundary of such holes, the tangent field (and higher order derivative fields) are determined by the surrounding regular patches and, as a consequence of the augmented parametrization, they change at every boundary point in a peculiar way.
The missing part of the surface shall be defined in such a way to interpolate the existing derivative fields, up to a reasonable order of continuity.

The general problem of how to fill the hole around an extraordinary vertex is an active research topic even in the case of the uniform parametrization. Recent developments include \cite{VaradyRockwoodSalvi2011,SalviVaradyRockwood2014,SalviVarady2014a},
where transfinite multi-sided patches are generated by interpolating derivative ribbons.
Our main point here is not to provide a one-stop solution, but to demonstrate how to obtain good quality local interpolating surfaces by exploiting, in synergy, the augmented parametrization, the regular patches described above and a local patching scheme for extraordinary vertices.
We will address this issue by mans of either $G^1$ Coons-Gregory patches \cite{Gregory1974,Hoschek93,Farin2002a} and their $G^2$ version \cite{MW91,Hermann1996}.
In both cases, we will see how to suitably tweak the definition of the patches, in order to interpolate the desired boundary information. This will allow us to generate augmented surfaces that interpolate a mesh containing extraordinary vertices, have arbitrary smoothness in the regular regions, and are $G^1$ or $G^2$ continuous in the extraordinary regions.\\

We would like to remark that the research reported in this paper was carried out under the European Eurostars project NIIT4CAD, aimed at the development of new technologies, mainly based on subdivision methods, for modeling arbitrary topology surfaces within a CAD system.
One of the advanced objectives of the project was to construct local interpolatory surfaces suitable for integration in a CAD system and this paper is a part of such a study.\\

The remainder of the paper is organized as follows. In Section \ref{sec:fundamental} we recall the univariate local spline interpolants presented in \cite{BCR13a,ABC13}.
Based on these, in Section \ref{sec:regular_case} we introduce our local method to generate interpolatory surfaces of good quality and high order of continuity, given as input a regular mesh. We also discuss all the relevant properties of the constructed surfaces.
Section \ref{sec:examples_regular} presents some application examples to demonstrate the effectiveness of the method and its superior performance with respect to the classical tensor product approach.
In Section \ref{sec:augmented_extraordinary} we address the problem of patching the extraordinary regions of the mesh by $G^1$ and $G^2$ augmented Coons-Gregory patches.
The latter patches are constructed to interpolate boundary curves and cross-boundary derivative fields. When this information is not available, it needs to be suitably generated and Section \ref{sec:network} illustrates a possible approach to handle this issue.
Section \ref{sec:examples_extraordinary} presents examples of augmented surfaces containing extraordinary vertices and finally
Section \ref{sec:conclusion} summarizes the results achieved and suggests some topics for future research.
\section{Local, non-uniform, univariate spline interpolation}
\label{sec:fundamental}
Suppose we are given a sequence of points $\bp_0, \bp_1,\dots,\bp_N$ in $\RR^n$, $n\geq2$, where $\bp_j$ and $\bp_{j+1}$ are distinct and $\bp_N=\bp_{0}$ and an associated sequence of parameter values $x_0<x_1<\dots<x_N$.
We consider the periodic spline curve $\bF:[x_0,x_N]\rightarrow \RR^n$ defined as
\begin{equation}\label{eq:spline}
\bF(x)=\sum_i \bp_{i} \, \psi_i(x),
\end{equation}
where $\psi_i:[x_0,x_N]\rightarrow \RR$ are \emph{fundamental spline functions}, namely they are piecewise functions on the partition $\{x_j\}$ such that $\psi_i(x_j)=\delta_{i,j}$.
Since we are interested in local interpolation, we require that
every $\psi_i$ has compact support, namely that it is nonzero in a finite number of parametric intervals.
In this way, when the data points are in $\RR^n$, $n=2,3$, formula \eqref{eq:spline} represents a parametric curve that locally interpolates the given data. It is evident that this approach allows us to overcome the need for solving linear systems of equations that arise when global spline interpolation is used.

In the remainder of the paper, we suppose that the fundamental functions $\psi_i$ have even support width $w$, namely that
\begin{equation}\label{eq:support}
\psi_i(x)=0,\qquad x\notin [x_{i-\frac{w}{2}},x_{i+\frac{w}{2}}].
\end{equation}
This assumption guarantees that an interpolating spline \eqref{eq:spline} preserves possible symmetries in the data and allows us to consistently simplify the notation that will be introduced later on.

If we limit ourselves to considering polynomial splines, the salient properties that characterize the fundamental functions are degree, continuity, maximum degree of polynomials that can be reproduced (this is equal to the approximation order minus one) and support width. In this respect, it is easily seen that the parametric interpolant $\bF$ in \eqref{eq:spline} has the same properties of the fundamental functions $\psi_i$.

A general method for constructing local interpolatory splines with specific properties and non-uniform parametrization  has been developed in the two recent papers \cite{BCR13a,ABC13}.
In that framework, we can choose an arbitrary support width and, correspondingly, construct various classes of local spline interpolants of the form \eqref{eq:spline} that differ one from another in their degree, continuity and approximation order.
Special instances of such splines are Catmull-Rom splines \cite{Catmull-Rom1974}, as well as other types of local interpolants including those in \cite{CdV96,BTU03,BS03,UTO07,Han2011}.

Adopting the notation in \cite{BCR13a,ABC13}, we indicate a class of splines having Degree $g$, Continuity order $k$, Polynomial reproduction degree $m$ and Support width $w$ by the shorthand notation $D^gC^kP^mS^w$.
Tables 1, 2, and 3 in \cite{BCR13a} and Table 2 in \cite{ABC13} summarize the different classes that we can construct for the most usual choices of support width, namely 4, 6 and 8.
Some examples with $w=4$ include the families $D^3C^0P^3S^4$, $D^3C^1P^2S^4$, $D^4C^2P^1S^4$, $D^5C^2P^2S^4$ from \cite{BCR13a} and $D^2C^1P^2S^4$ or $D^3C^2P^2S^4$ from \cite{ABC13}. Even more classes of splines can be found widening the support width to $6$ or $8$.

\begin{rem}
The local spline interpolants in \cite{BCR13a} and \cite{ABC13} differ in the following way.
In \cite{BCR13a} a fundamental function is formed by one polynomial segment in each interval $[x_i,x_{i+1}]$.
The work  \cite{ABC13} generalizes the construction in such a way that, in a parametric interval, a fundamental function can be composed of more than one polynomial segment, with proper continuity at the join between one segment and another. Such splines are called B$r$-splines, where $r$ denotes the number of different polynomial pieces contained in each parameter interval. Compared to B$1$ splines, B$r$ splines can have lower degree, at equal support width, continuity and order of approximation. For example, the classes $D^2C^1P^2S^4$ or $D^3C^2P^2S^4$ mentioned above are respectively of B$2$ and B$3$ type.
\end{rem}

We now introduce a local notation for the restriction of a spline interpolant $\bF$ to a parametric interval $[x_s,x_{s+1}]$. This will allow us to emphasize the local dependence on data and parameters and will be useful to generalize the above framework to the bivariate case.
To this aim, we shall observe that in each parametric interval there are exactly $w$ nonzero fundamental functions and each of them depends on a sequence of $w-1$ parameter intervals of the form
\begin{equation}\label{eq:di}
d_i = x_{i+1}-x_{i}.
\end{equation}
Therefore, we can conveniently define the vector
\begin{equation}\label{eq:dell}
\bd = \left(d_{s-\frac{w}{2}+1},\dots,d_s,\dots,d_{s+\frac{w}{2}-1}\right),
\end{equation}
which contains all the parameter intervals necessary to evaluate $\bF$ in $[x_s,x_{s+1}]$. We call such a vector the
\emph{local parameter vector} relative to $[x_s,x_{s+1}]$ (or, equivalently, relative to $\overline{\bp_s \bp_{s+1}}$).
If we now map the interval $[x_s,x_{s+1}]$ to $[0,d_s]$,
we can write the segment of $\bF$ bounded by $\bp_s$ and $\bp_{s+1}$
as
\begin{equation}\label{eq:spline_di}
\bF(x)\big|_{[0,d_s]}=\sum_{i=s-\frac{w}{2}+1}^{s+\frac{w}{2}} \bp_{i} \, \psi_{i}(x,\bd),
\end{equation}
where $\psi_{i}(x,\bd)$ is the restriction of $\psi_{i}(x)$ to $[x_s,x_{s+1}]$ expressed in terms of the sequence of parameter intervals \eqref{eq:dell}.
%
\begin{figure}[t]
\centering
\subfigure[]{\includegraphics[width=0.35\textwidth]{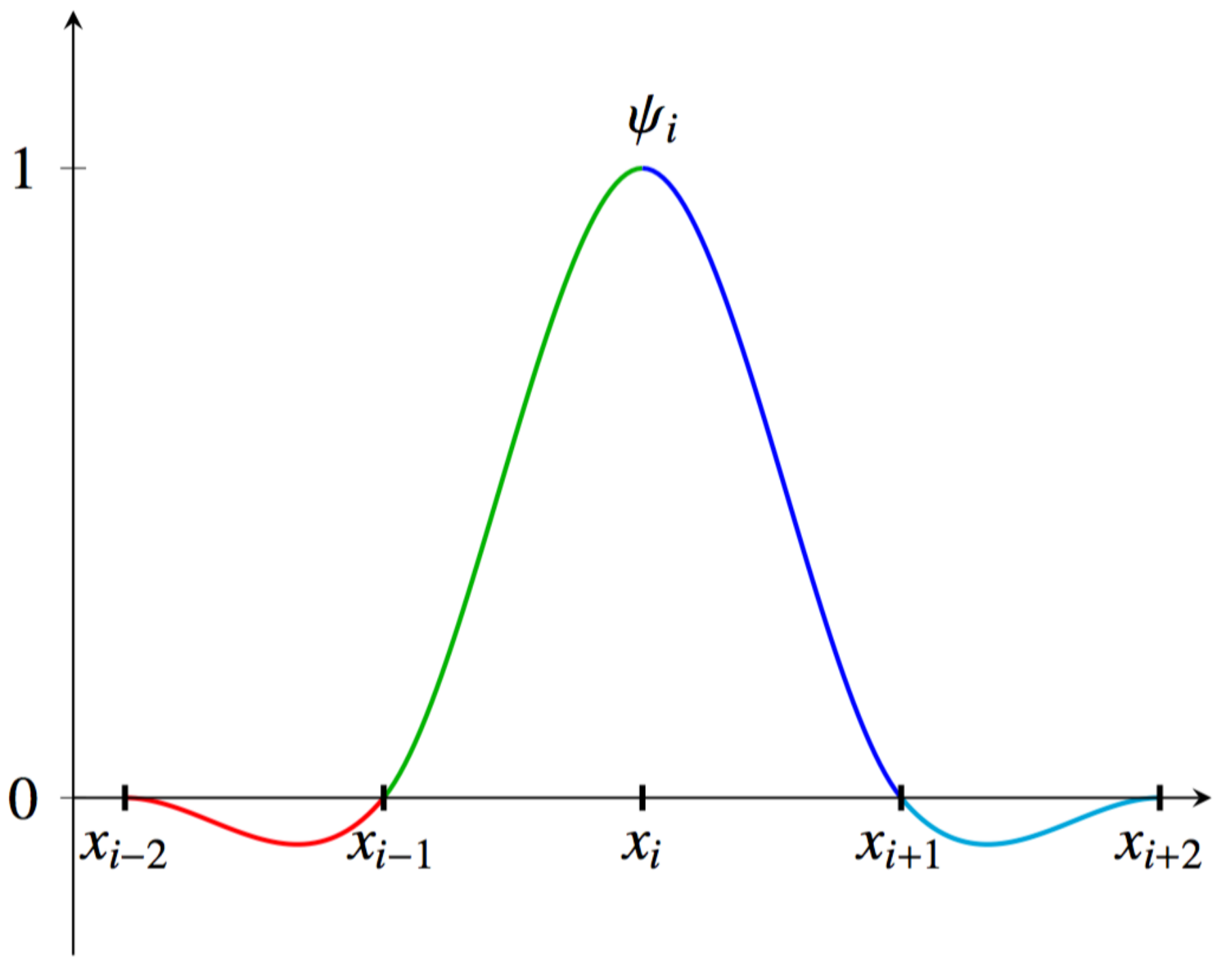}\label{fig:fund_D3C1P2S4}}
\hspace{1cm}
\subfigure[]{\includegraphics[width=0.35\textwidth]{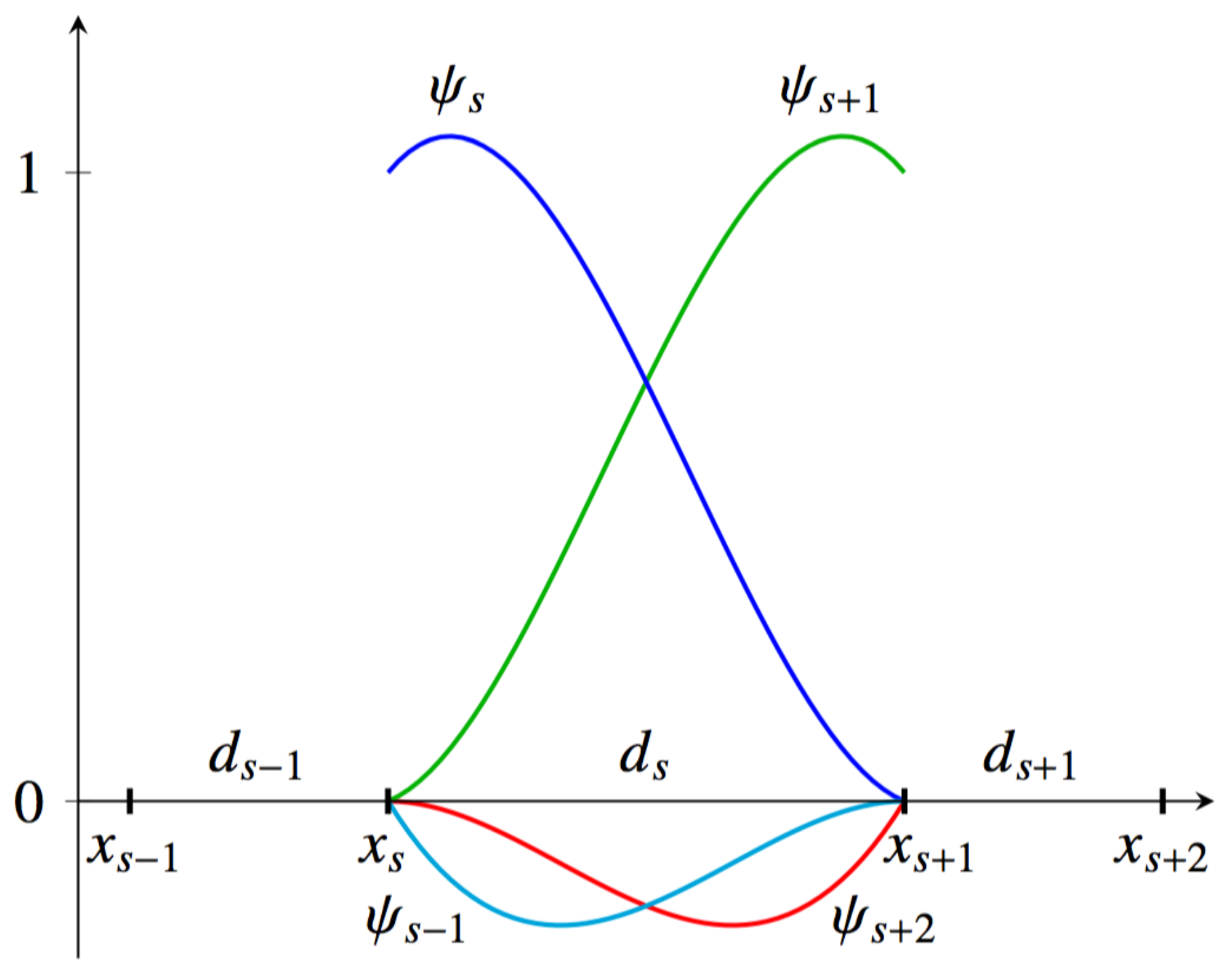}\label{fig:fund_D3C1P2S4_pieces}}
\caption{\subref{fig:fund_D3C1P2S4}
Fundamental function $\psi_i$ of the class $D^3C^1P^2S^4$, with the different pieces represented in different colors.
\subref{fig:fund_D3C1P2S4_pieces}
The fundamental functions of the class $D^3C^1P^2S^4$ that are nonzero in the interval $[x_s,x_{s+1}]$.
}
\label{fig:fund}
\end{figure}

As an example of our setting and notation, we provide in \apptext \ref{app:fund_eqs} the expressions for
the well-known Catmull-Rom splines \cite{Catmull-Rom1974}, corresponding to the class $D^3C^1P^2S^4$ in \cite{BCR13a}.
The fundamental functions of this class are also
illustrated in Figure \ref{fig:fund}.

Moreover, to facilitate the reader in reproducing the examples proposed in the following sections,
we also list in appendix the fundamental functions $D^5C^2P^2S^4$. The latter are an interesting application of the approach in \cite{BCR13a}, since they have high continuity, in spite of a very limited support.
Moreover we remark that, given our interest in interpolating 3-dimensional data, it may be also useful to consider splines with continuity $C^3$, or higher, and some graphical examples will be presented in the forthcoming sections.
As it can reasonably be expected, in this case the explicit expression of the fundamental functions is more complicated. Nevertheless, the evaluation of the spline interpolant can still be performed in a computationally efficient way as discussed in \cite{BCR13a,ABC13}.

As recalled in the previous section, the choice of the parameter sequence $\{x_j\}$ has a large influence on
the shape of an interpolating spline curve and in this respect various effective methods can be considered, such as  \cite{ANW1967,Lee1989,Foley-Nielson1989,Fang2013}.
Since the focus of this paper is not on comparing different techniques, we will take as a running example the centripetal parametrization, which is acknowledged to produce good results for the majority of critical data sets.
According to the centripetal parametrization, fixed $x_0$, the parameter values $x_j$, $j=1,\dots,N$ are computed through
\begin{equation}\label{eq:xi}
x_{i+1}=x_{i}+\norm{\bp_{i+1}-\bp_i}_2^\alpha,
\end{equation}
with  $\alpha=\frac{1}{2}$ and $\norm{\cdot}_2$ denoting the Euclidean norm.
It is therefore clear that the parameter values depend on the geometry of the interpolation points.
In addition, from \eqref{eq:xi}, the chordal and uniform parameterizations can be derived, setting respectively $\alpha=1$ and $\alpha=0$ \cite{Farin2002a,Farin2002b}.

In the following section we generalize to the bivariate setting the considered local, non-uniform, univariate spline interpolants.
Before proceeding, we need to remark that our assumption to work with periodic data has the sole purpose of simplifying the presentation.
Open curves can similarly be defined, provided that \qtext{special} fundamental functions are used in order to evaluate \eqref{eq:spline} in the boundary intervals.
These fundamental functions are defined so as to interpolate the derivatives of $\bF$ up to suitable order at $x_0$ and $x_N$ and we refer the reader to \cite{ABC13} for more details. The surface construction developed in the following section can be adapted to handle open data sets, along the same lines of the univariate case.

%
%
%
\section{Local interpolation of regular meshes by high quality surfaces of arbitrary continuity}
\label{sec:regular_case}
In this section we assume that a regular mesh of points in $\RR^3$ is given and
we develop a local method to generate an interpolatory surface of good quality and high order of continuity, based on the univariate fundamental functions introduced in the previous section and on a proper technique of parametrization.
By \qtext{good quality} we mean that the interpolating surface will fit the shape suggested by the input data in a faithful manner and will not present undesired interpolation artifacts.
Moreover, we will prove that the surface has the same smoothness of the underlying class of fundamental functions, which can have arbitrary continuity in the general framework \cite{BCR13a,ABC13}.

A regular mesh is one where every vertex belongs to four edges (and faces)
and can thus be seen as a rectangular grid of 3D points, whose edges are associated with two independent domain directions.
We say that two edges are \emph{opposite} when they have no vertex in common and they belong to the same face,
whereas we say that two edges are \emph{adjacent} when they have
one vertex in common and do not belong to the same face.
We call an \emph{edge ribbon} any ordered sequence of pairwise opposite edges and a
\emph{section polyline} a polyline formed by a sequence of pairwise adjacent edges. Moreover, we call a \emph{section curve} any curve that interpolates the vertices of a section polyline.
For simplicity, the discussion will be limited to meshes without boundary and therefore we can assume that all section polylines and curves are closed.

In the introduction to this work, we have drawn the reader's attention the well-known fact that a tensor-product surface is not, in general, an interpolant of good quality.
This is because all isocurves of such a surface must have the same parametrization.
As recalled, the latter requirement may result in an unnatural wiggling of the section curves that is very likely to happen when the data points are unevenly spaced and is even more evident when local interpolation methods are used.

In contrast to the tensor product technique, we wish to construct a surface where the vertices of each section polyline are interpolated at the parameter values that allow for the best quality of the resulting section curve.

To this aim, for every section polyline, we derive a non-uniform parameter sequence by exploiting an appropriate data-dependent technique of parametrization. Then we assign to the section polyline edges the resulting parameter intervals.
For instance, labeled by $\bp_{i,j}$ the mesh points, the centripetal parametrization applied to each section polyline will yield the edge parameter intervals

\begin{equation}\label{eq:d_e}
d_{i,j} \coloneqq \norm{\bp_{i+1,j}-\bp_{i,j}}_2^\alpha  \qquad \text{and} \qquad
e_{i,j} \coloneqq \norm{\bp_{i,j+1}-\bp_{i,j}}_2^\alpha, \qquad \alpha=\frac{1}{2}.
\end{equation}
This can be easily seen comparing the above expressions with formulae \eqref{eq:di}--\eqref{eq:xi}.
\begin{figure}[t]
\centering

\begin{tikzpicture}

\tikzstyle{knot} = [shape=circle,draw=black,fill=white,minimum size=4pt,inner sep=0pt]
\tikzstyle{cp} = [anchor=west,inner sep=0pt,xshift=0.5ex,yshift=-1.5ex]
\tikzstyle{tagd} = [anchor=south,inner sep=1pt,xshift=0ex,yshift=0ex]
\tikzstyle{tage} = [anchor=east,inner sep=1pt,xshift=0ex,yshift=0ex]
\tikzstyle{ext} = [draw=black]

\newcommand{\GridScale}{2}

\path ($\GridScale*(-2,-0.6)$) coordinate (Pm10); \path ($\GridScale*(-1,-0.6)$) coordinate (P00); \path ($\GridScale*(0,-0.6)$) coordinate (P10); \path ($\GridScale*(1,-0.6)$) coordinate (P20); \path ($\GridScale*(2,-0.6)$) coordinate (P30);
\path ($\GridScale*(-2,0)$) coordinate (Pm11); \path ($\GridScale*(-1,0)$) coordinate (P01); \path ($\GridScale*(0,0)$) coordinate (P11); \path ($\GridScale*(1,0)$) coordinate (P21); \path ($\GridScale*(2,0)$) coordinate (P31);
\path ($\GridScale*(-2,1)$) coordinate (Pm12); \path ($\GridScale*(-1,1)$) coordinate (P02); \path ($\GridScale*(0,1)$) coordinate (P12); \path ($\GridScale*(1,1)$) coordinate (P22); \path ($\GridScale*(2,1)$) coordinate (P32);
\path ($\GridScale*(-2,1.6)$) coordinate (Pm13); \path ($\GridScale*(-1,1.6)$) coordinate (P03); \path ($\GridScale*(0,1.6)$) coordinate (P13); \path ($\GridScale*(1,1.6)$) coordinate (P23); \path ($\GridScale*(2,1.6)$) coordinate (P33);
\path ($\GridScale*(0.25,0)$) coordinate (dx);
\path ($\GridScale*(0,0.25)$) coordinate (dy);
\path ($(Pm10)-(dy)$) coordinate (Pm10ym); \path ($(P00)-(dy)$) coordinate (P00ym); \path ($(P10)-(dy)$) coordinate (P10ym); \path ($(P20)-(dy)$) coordinate (P20ym); \path ($(P30)-(dy)$) coordinate (P30ym);
\path ($(Pm13)+(dy)$) coordinate (Pm13yp); \path ($(P03)+(dy)$) coordinate (P03yp); \path ($(P13)+(dy)$) coordinate (P13yp); \path ($(P23)+(dy)$) coordinate (P23yp); \path ($(P33)+(dy)$) coordinate (P33yp);
\path ($(Pm10)-(dx)$) coordinate (Pm10xm); \path ($(Pm11)-(dx)$) coordinate (Pm11xm); \path ($(Pm12)-(dx)$) coordinate (Pm12xm); \path ($(Pm13)-(dx)$) coordinate (Pm13xm);
\path ($(P30)+(dx)$) coordinate (P30xp); \path ($(P31)+(dx)$) coordinate (P31xp); \path ($(P32)+(dx)$) coordinate (P32xp); \path ($(P33)+(dx)$) coordinate (P33xp);
\path[ext] (Pm10)--(Pm10ym); \path[ext] (P00)--(P00ym); \path[ext] (Pm10)--(Pm10ym); \path[ext] (P10)--(P10ym); \path[ext] (P20)--(P20ym); \path[ext] (P30)--(P30ym);
\path[ext] (Pm13)--(Pm13yp); \path[ext] (P03)--(P03yp); \path[ext] (Pm13)--(Pm13yp); \path[ext] (P13)--(P13yp); \path[ext] (P23)--(P23yp); \path[ext] (P33)--(P33yp);
\path[ext] (Pm10)--(Pm10xm); \path[ext] (Pm11)--(Pm11xm); \path[ext] (Pm12)--(Pm12xm); \path[ext] (Pm13)--(Pm13xm);
\path[ext] (P30)--(P30xp); \path[ext] (P31)--(P31xp); \path[ext] (P32)--(P32xp); \path[ext] (P33)--(P33xp);

\path[fill=black!20,opacity=0.8] (P11)--(P21)--(P22)--(P12)--cycle;
\node[] at ($0.25*(P11)+0.25*(P21)+0.25*(P22)+0.25*(P12)$) {$\bS$};
\path[fill=black!10,opacity=0.7] (P01)--(P11)--(P12)--(P02)--cycle;
\node[] at ($0.25*(P01)+0.25*(P11)+0.25*(P12)+0.25*(P02)$) {$\tilde{\bS}$};

\path ($\GridScale*(0.17,0)$) coordinate (du);
\path ($\GridScale*(0,0.17)$) coordinate (dv);
\path ($(P11)+(0.12,0.12)$) coordinate (S00);
\draw[-stealth] (S00) -- ($(S00)+(du)$);
\node[anchor=west,inner sep=1pt] at ($(S00)+(du)$) {\footnotesize$u$};
\draw[-stealth] (S00) -- ($(S00)+(dv)$);
\node[anchor=south,inner sep=1pt] at ($(S00)+(dv)$) {\footnotesize$v$};
\path ($(P01)+(0.12,0.12)$) coordinate (St00);
\draw[-stealth] (St00) -- ($(St00)+(du)$);
\node[anchor=west,inner sep=1pt] at ($(St00)+(du)$) {\footnotesize$u$};
\draw[-stealth] (St00) -- ($(St00)+(dv)$);
\node[anchor=south,inner sep=1pt] at ($(St00)+(dv)$) {\footnotesize$v$};

\node[cp] at (Pm10) {$\bp_{-2,-1}$}; \node[cp] at (P00) {$\bp_{-1,-1}$}; \node[cp] at (P10) {$\bp_{0,-1}$}; \node[cp] at (P20) {$\bp_{1,-1}$}; \node[cp] at (P30) {$\bp_{2,-1}$};
\node[cp] at (Pm11) {$\bp_{-2,0}$}; \node[cp] at (P01) {$\bp_{-1,0}$}; \node[cp] at (P11) {$\bp_{0,0}$}; \node[cp] at (P21) {$\bp_{1,0}$}; \node[cp] at (P31) {$\bp_{2,0}$};
\node[cp] at (Pm12) {$\bp_{-2,1}$}; \node[cp] at (P02) {$\bp_{-1,1}$}; \node[cp] at (P12) {$\bp_{0,1}$}; \node[cp] at (P22) {$\bp_{1,1}$}; \node[cp] at (P32) {$\bp_{2,1}$};
\node[cp] at (Pm13) {$\bp_{-2,2}$}; \node[cp] at (P03) {$\bp_{-1,2}$}; \node[cp] at (P13) {$\bp_{0,2}$}; \node[cp] at (P23) {$\bp_{1,2}$}; \node[cp] at (P33) {$\bp_{2,2}$};

\draw (Pm10)--(P00)--(P10)--(P20)--(P30);
\draw (Pm11)--(P01)--(P11)--(P21)--(P31);
\draw (Pm12)--(P02)--(P12)--(P22)--(P32);
\draw (Pm13)--(P03)--(P13)--(P23)--(P33);
\draw (Pm10)--(Pm11)--(Pm12)--(Pm13);
\draw (P00)--(P01)--(P02)--(P03);
\draw (P10)--(P11)--(P12)--(P13);
\draw (P20)--(P21)--(P22)--(P23);
\draw (P30)--(P31)--(P32)--(P33);

\foreach \coord in {(Pm10),(P00),(P10),(P20),(P30), (Pm11),(P01),(P11),(P21),(P31), (Pm12),(P02),(P12),(P22),(P32), (Pm13),(P03),(P13),(P23),(P33)}
{ \node[knot] at \coord {}; }

\node[tagd] at ($0.5*(Pm10)+0.5*(P00)$) {$d_{-2,-1}$}; \node[tagd] at ($0.5*(Pm11)+0.5*(P01)$) {$d_{-2,0}$}; \node[tagd] at ($0.5*(Pm12)+0.5*(P02)$) {$d_{-2,1}$}; \node[tagd] at ($0.5*(Pm13)+0.5*(P03)$) {$d_{-2,2}$};
\node[tagd] at ($0.5*(P00)+0.5*(P10)$) {$d_{-1,-1}$}; \node[tagd] at ($0.5*(P01)+0.5*(P11)$) {$d_{-1,0}$}; \node[tagd] at ($0.5*(P02)+0.5*(P12)$) {$d_{-1,1}$}; \node[tagd] at ($0.5*(P03)+0.5*(P13)$) {$d_{-1,2}$};
\node[tagd] at ($0.5*(P10)+0.5*(P20)$) {$d_{0,-1}$}; \node[tagd] at ($0.5*(P11)+0.5*(P21)$) {$d_{0,0}$}; \node[tagd] at ($0.5*(P12)+0.5*(P22)$) {$d_{0,1}$}; \node[tagd] at ($0.5*(P13)+0.5*(P23)$) {$d_{0,2}$};
\node[tagd] at ($0.5*(P20)+0.5*(P30)$) {$d_{1,-1}$}; \node[tagd] at ($0.5*(P21)+0.5*(P31)$) {$d_{1,0}$}; \node[tagd] at ($0.5*(P22)+0.5*(P32)$) {$d_{1,1}$}; \node[tagd] at ($0.5*(P23)+0.5*(P33)$) {$d_{1,2}$};
\node[tage] at ($0.5*(Pm10)+0.5*(Pm11)$) {$e_{-2,-1}$}; \node[tage] at ($0.5*(P00)+0.5*(P01)$) {$e_{-1,-1}$}; \node[tage] at ($0.5*(P10)+0.5*(P11)$) {$e_{0,-1}$}; \node[tage] at ($0.5*(P20)+0.5*(P21)$) {$e_{1,-1}$}; \node[tage] at ($0.5*(P30)+0.5*(P31)$) {$e_{2,-1}$};
\node[tage] at ($0.5*(Pm11)+0.5*(Pm12)$) {$e_{-2,0}$}; \node[tage] at ($0.5*(P01)+0.5*(P02)$) {$e_{-1,0}$}; \node[tage] at ($0.5*(P11)+0.5*(P12)$) {$e_{0,0}$}; \node[tage] at ($0.5*(P21)+0.5*(P22)$) {$e_{1,0}$}; \node[tage] at ($0.5*(P31)+0.5*(P32)$) {$e_{2,0}$};
\node[tage] at ($0.5*(Pm12)+0.5*(Pm13)$) {$e_{-2,1}$}; \node[tage] at ($0.5*(P02)+0.5*(P03)$) {$e_{-1,1}$}; \node[tage] at ($0.5*(P12)+0.5*(P13)$) {$e_{0,1}$}; \node[tage] at ($0.5*(P22)+0.5*(P23)$) {$e_{1,1}$}; \node[tage] at ($0.5*(P32)+0.5*(P33)$) {$e_{2,1}$};

\end{tikzpicture}
\caption{
Labeling of vertices and parameter intervals for the construction of the augmented surface patch $\bS$ having the expression \protect\eqref{eq:S}.
}
\label{fig:grid}
\end{figure}
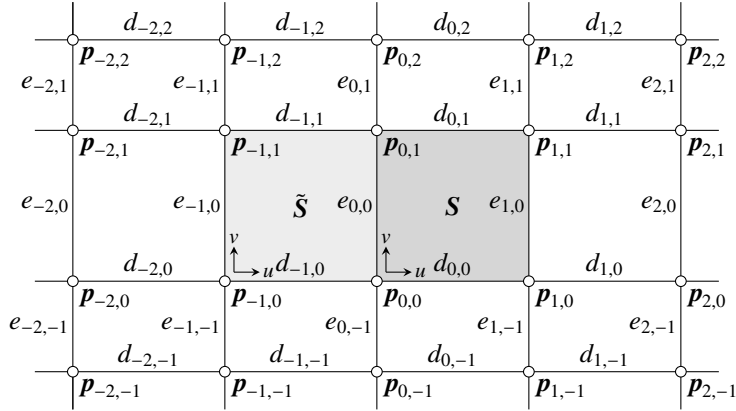

We call the resulting configuration of parameters an \emph{augmented parametrization}. The augmented parametrization is
featured by the fact that the parameter intervals allocated to the edges of one mesh face will not form, in general, a rectangle. 
The terminology was firstly used in \cite{Muller2006}, dealing with non-uniform subdivision schemes, where augmented faces of a mesh are defined with a similar meaning.
Conversely, in a tensor product surface, the parameter intervals must be equal for all edges belonging in the same edge ribbon, or, equivalently, the intervals allocated to the edges of every mesh face must form a rectangle.

We now aim to construct a composite surface, which we call an \emph{augmented surface}, where every section curve is a local, univariate, spline of class $D^gC^kP^mS^w$ and has an independent parametrization determined by the edge parameter intervals of the corresponding section polyline.
This means that the section curve piece bounded by $\bp_{i,j}$ and $\bp_{i+1,j}$ should have an associated parametric interval of length equal to $d_{i,j}$.
Analogously, the parametric interval between any two vertices, $\bp_{i,j}$ and $\bp_{i,j+1}$ of the corresponding section curve should have length $e_{i,j}$.
In this way, each section polyline of the mesh can be interpolated at the parameter values that allow the best quality of the resulting section curve.
Moreover, such curves serve as a guide for the shape of the surface, which will be in turn a good quality interpolant of the input mesh.

It is sufficient to illustrate the construction for a generic surface patch interpolating the points
$\bp_{s,t}, \bp_{s+1,t}, \bp_{s,t+1}, \allowbreak \bp_{s+1,t+1}$ and in particular, without loss of generality, we can consider the patch which interpolates $\bp_{0,0}, \bp_{1,0}, \bp_{0,1}, \bp_{1,1}$, depicted in Figure \ref{fig:grid}.
(As of now, the neighboring patch $\tilde{\bS}$ in Figure \ref{fig:grid} shall be overlooked. It will be used later in the proof of Proposition \ref{prop:Gamma}.)
Any other patch can be analogously derived after proper index shift.

Let $[0,1]^2$ be the parametric domain associated with the considered patch.
The first stage of the construction is to choose a class of local, univariate, spline interpolants $D^gC^kP^mS^w$, based on the properties that we seek in the final surface.
Hence, in view of the $C^k$ continuity of the fundamental functions, we consider the polynomials $\delta_{i,j}(v)$
$i=-\frac{w}{2}+1,\dots,\frac{w}{2}-1, j=0$ and $\epsilon_{i,j}(u)$ $i=0, j=-\frac{w}{2}+1,\dots,\frac{w}{2}-1$,
of degree $2k+1$, uniquely determined by the following conditions:
\begin{equation}\stepcounter{equation}\tag{{\theequation}a}\label{eq:dh}
\begin{alignedat}{5}
& \delta_{i,j}:[0,1]\rightarrow [d_{i,j}, d_{i,j+1}],\\[1ex]
& \delta_{i,j}(0)= d_{i,j}, \qquad \text{and} \qquad \delta_{i,j}(1)=d_{i,j+1},\\[1ex]
& d_{i,j}^{(r)}(0) = d_{i,j}^{(r)}(1)=0, \qquad r=1,\dots,k,\quad
\end{alignedat}
\end{equation}
and
\begin{equation}\tag{{\theequation}b}\label{eq:eh}
\begin{alignedat}{5}
& \epsilon_{i,j}:[0,1]\rightarrow [e_{i,j}, e_{i+1,j}], \\[1ex]
& \epsilon_{i,j}(0)=e_{i,j}, \qquad \text{and} \qquad \epsilon_{i,j}(1)=e_{i+1,j},\\[1ex]
& \epsilon_{i,j}^{(r)}(0) = \epsilon_{i,j}^{(r)}(1)=0, \quad r=1,\dots,k.\quad
\end{alignedat}
\end{equation}
The above polynomials, which we call the \emph{local parametrization functions}, interpolate the two parameter intervals corresponding to opposite edges of a mesh face and therefore have the effect of \qtext{blending} such intervals.  Moreover, the requirement that  $\delta_{i,j}(v)$ and $\epsilon_{i,j}(u)$ have vanishing derivatives up to order $k$ at $0$ and $1$ guarantees that they are monotonic functions with positive first derivative.

We use the local parametrization functions to associate with any couple of coordinates $(u,v)\in [0,1]^2$
two local parameter vectors $\bd$ and $\be$ as follows:
\begin{equation}\stepcounter{equation}\tag{{\theequation}a}\label{eq:poly_d}
\bd = \bd(v)= \left(\delta_{-\frac{w}{2}+1,0}(v),\dots, \delta_{0,0}(v),\dots \delta_{\frac{w}{2}-1,0}(v)\right),
\end{equation}
and
\begin{equation}\tag{{\theequation}b}\label{eq:poly_e}
\be = \be(u) = \left(\epsilon_{0,-\frac{w}{2}+1}(u),\dots,\epsilon_{0,0}(u),\dots,\epsilon_{0,\frac{w}{2}-1}(u)\right).
\end{equation}
In this way, at the patch boundary described by $(u,0)$, $u\in[0,1]$, $\bd= \left(d_{-w/2+1,0},\dots,d_{0,0},\dots,d_{w/2-1,0}\right)$ is the local parameter vector relative to $\overline{\bp_{0,0}\bp_{1,0}}$ (see definition \eqref{eq:dell}).
In addition, at the opposite boundary $(u,1)$, $\bd$ is the local parameter vector relative to $\overline{\bp_{0,1}\bp_{1,1}}$.
Analogously, for any point $(0,v)$ or $(1,v)$, $v\in[0,1]$, $\be$ is the local parameter vector associated respectively with $\overline{\bp_{0,0}\bp_{0,1}}$ or $\overline{\bp_{1,0}\bp_{1,1}}$.

We also consider the two \emph{local variables}
\begin{equation}\label{eq:xy}
\begin{aligned}
x & = u\,\delta_{0,0}(v), \qquad \text{and} \qquad
y & = v\,\epsilon_{0,0}(u),
\end{aligned}
\end{equation}
such that $x$ and $y$ span the intervals $[0,\delta_{0,0}(v)]$ and $[0,\epsilon_{0,0}(u)]$ while respectively $u$ and $v$ vary in $[0,1]$.

Finally, for any $(u,v)\in[0,1]^2$, we define the augmented surface patch $\bS$ by
\begin{equation}\label{eq:S}
\bS(u,v)= \sum_{i=-\frac{w}{2}+1}^{\frac{w}{2}} \sum_{j=-\frac{w}{2}+1}^{\frac{w}{2}}
\bp_{i,j} \Psi_{i,j}\Big(x,y,\bd,\be\Big),
\end{equation}
where
\begin{equation}\label{eq:biv_fund}
\Psi_{i,j}\Big(x,y,\bd,\be\Big) = \psi_i\Big(x,\bd\Big) \, \psi_j\Big(y,\be \Big),
\end{equation}
with local parameter vectors $\bd,\be$ given by \eqref{eq:poly_d}--\eqref{eq:poly_e} and $x,y$ computed through \eqref{eq:xy}.

Note that $\bS$ depends on a local grid of mesh vertices of size equal to $w\times w$, where $w$ is the support width of the underlying functions $\psi_i$ and $\psi_j$. For brevity, we will sometimes say that the patch has support $w$.

\begin{rem}
For simplicity of presentation we have assumed that the functions $\psi_i(x,\bd)$ and $\psi_j(y,\be)$ in \eqref{eq:biv_fund} belong to the same family $D^gC^kP^mS^w$.
However, it is nor difficult to see that the construction of an augmented patch can be adapted for the case where the fundamental functions belong to two different classes $D^gC^kP^mS^w$.
\end{rem}

We remark that the augmented patch $\bS$ is not a tensor product surface. More precisely, we can interpret the above construction as follows.
For a given $(\bar{u},\bar{v})\in [0,1]^2$, let $\bar{\bd}$ and $\bar{\be}$ be the vectors
\begin{equation}
\bar{\bd} = \bd(\bar{v}), \quad \bar{\be} = \be(\bar{u}),
\end{equation}
computed through \eqref{eq:poly_d}--\eqref{eq:poly_e} and let $\bar{d}=\delta_{0,0}(\bar{v})$ and $\bar{e}=\epsilon_{0,0}(\bar{u})$.
Then the value of $\bS$ at $(\bar{u},\bar{v})$ is equal to the value at $(\bar{x},\bar{y})$ of the tensor product patch
\begin{equation}\label{eq:T}
\bT_{(\bar{u},\bar{v})}(x,y) = \sum_{i=-\frac{w}{2}+1}^{\frac{w}{2}} \sum_{j=-\frac{w}{2}+1}^{\frac{w}{2}}
\bp_{i,j} \psi_i\Big(x,\bar{\bd}\Big) \, \psi_j\Big(y,\bar{\be}\Big),
\qquad (x,y)\in[0,\bar{d}]\times [0,\bar{e}].
\end{equation}
\ie $\bS(\bar{u},\bar{v})=\bT_{(\bar{u},\bar{v})}(\bar{x},\bar{y})$.
In this view, a different tensor-product patch $\bT_{(\bar{u},\bar{v})}$ determines the value of $\bS$ at each domain point $(\bar{u},\bar{v})\in [0,1]^2$.


The construction of the augmented surface patch $\bS$ is schematized in
Figure \ref{fig:augm} for a class of fundamental functions having support width $w=4$ and, more generally, can be summarized as follows. Every mesh face gives rise to a surface patch, parameterized over the domain $[0,1]^2$.
For a given $(\bar{u},\bar{v})\in [0,1]^2$ we locally interpolate the parameter intervals, separately in the $u$ and $v$ direction, in order to generate two local parameter vectors $\bar{\bd}=\bd(\bar{v})$ and $\bar{\be}=\be(\bar{u})$.
We also map $(\bar{u},\bar{v})$ into a couple of values $(\bar{x},\bar{y})$. This mapping is such that, at the boundaries $(u,0)$ and $(u,1)$, $u\in[0,1]$, $x$ spans respectively the entire intervals $[0,d_{0,0}]$ and $[0,d_{0,1}]$. Analogously, at the boundaries $(0,v)$ and $(1,v)$, $v\in[0,1]$, $y$ spans respectively the intervals $[0,e_{0,0}]$ and $[0,e_{1,0}]$. These are precisely the parameter intervals allocated to the edges of the given mesh face.
Finally, we consider the surface $\bT_{(\bar{u},\bar{v})}(x,y)$ defined as tensor product of the local univariate fundamental functions of class $D^gC^kP^mS^w$ on $\bar{\bd}$ and $\bar{\be}$ and we set
$\bS(\bar{u},\bar{v})=\bT_{(\bar{u},\bar{v})}(\bar{x},\bar{y})$.

\begin{figure}\label{fig:augm}
\centering
\begin{tikzpicture}
\tikzset{myptr/.style={decoration={markings,mark=at position 1 with %
    {\arrow[scale=6,>=stealth]{>}}},postaction={decorate}}}
\node[anchor=north west] at (0.2,0.4) {\begin{tikzpicture}[scale=1]
\definecolor{cobalt}{rgb}{0.0, 0.28, 0.67}
\definecolor{forestgreenweb}{rgb}{0.13, 0.55, 0.13}
\definecolor{cssgreen}{rgb}{0.0, 0.5, 0.0}


\tikzstyle{patch} = [pattern color=black!30]
\tikzstyle{knot} = [shape=circle,draw=black,fill=white,minimum size=4pt,inner sep=0pt,yshift=1.5,xshift=-1.5]

\pgfmathsetmacro{\GridScale}{1.35}
\pgfmathsetmacro{\dim}{1}
\pgfmathsetmacro{\di}{1}
\pgfmathsetmacro{\dip}{1}
\pgfmathsetmacro{\eim}{1}
\pgfmathsetmacro{\ei}{1}
\pgfmathsetmacro{\eip}{1}
\pgfmathsetmacro{\ubar}{0.2}
\pgfmathsetmacro{\vbar}{0.3}
\pgfmathsetmacro{\tickL}{0.07}

\path ($\GridScale*(-\dim,-\eim)$) coordinate (P00); \path ($\GridScale*(0,-\eim)$) coordinate (P10); \path ($\GridScale*(\di,-\eim)$) coordinate (P20); \path ($\GridScale*(\di+\dip,-\eim)$) coordinate (P30);
\path ($\GridScale*(-\dim,0)$) coordinate (P01); \path ($\GridScale*(0,0)$) coordinate (P11); \path ($\GridScale*(\di,0)$) coordinate (P21); \path ($\GridScale*(\di+\dip,0)$) coordinate (P31);
\path ($\GridScale*(-\dim,\ei)$) coordinate (P02); \path ($\GridScale*(0,\ei)$) coordinate (P12); \path ($\GridScale*(\di,\ei)$) coordinate (P22); \path ($\GridScale*(\di+\dip,\ei)$) coordinate (P32);
\path ($\GridScale*(-\dim,\ei+\eip)$) coordinate (P03); \path ($\GridScale*(0,\ei+\eip)$) coordinate (P13); \path ($\GridScale*(\di,\ei+\eip)$) coordinate (P23); \path ($\GridScale*(\di+\dip,\ei+\eip)$) coordinate (P33);

\path ($(P01)+\GridScale*(0,\vbar*\ei)$) coordinate (D0); \path ($(P11)+\GridScale*(0,\vbar*\ei)$) coordinate (D1); \path ($(P21)+\GridScale*(0,\vbar*\ei)$) coordinate (D2); \path ($(P31)+\GridScale*(0,\vbar*\ei)$) coordinate (D3);
\path ($(P10)+\GridScale*(\ubar*\di,0)$) coordinate (E0); \path ($(P11)+\GridScale*(\ubar*\di,0)$) coordinate (E1); \path ($(P12)+\GridScale*(\ubar*\di,0)$) coordinate (E2); \path ($(P13)+\GridScale*(\ubar*\di,0)$) coordinate (E3);

\path[patch] (P11)--(P21)--(P22)--(P12)--cycle;
\node[] at ($0.25*(P11)+0.25*(P21)+0.25*(P22)+0.25*(P12)$) {};

\path[fill=black!40,fill opacity=0.3] (P11)--(P21)--(P22)--(P12)--cycle;
\draw (P00)--(P10)--(P20)--(P30);
\draw (P01)--(P11)--(P21)--(P31);
\draw (P02)--(P12)--(P22)--(P32);
\draw (P03)--(P13)--(P23)--(P33);
\draw (P00)--(P01)--(P02)--(P03);
\draw (P10)--(P11)--(P12)--(P13);
\draw (P20)--(P21)--(P22)--(P23);
\draw (P30)--(P31)--(P32)--(P33);
\draw[cssgreen,line width=1.2pt] (D0)--(D1)--(D2)--(D3);
\draw[cobalt,line width=1.2pt] (E0)--(E1)--(E2)--(E3);


\foreach \coord in {(P00),(P10),(P20),(P30),(P01),(P11),(P21),(P31),(P02),(P12),(P22),(P32),(P03),(P13),(P23),(P33)}
{ \node[knot] at \coord {}; }

\path (P01)--(P11) node[midway,below,inner sep=1pt] {$d_{-1,0}$};
\path (P11)--(P21) node[midway,below,inner sep=1pt] {$d_{0,0}$};
\path (P21)--(P31) node[midway,below,inner sep=1pt] {$d_{1,0}$};
\path (P02)--(P12) node[midway,above,inner sep=1pt] {${d}_{-1,1}$};
\path (P12)--(P22) node[midway,above,inner sep=1pt] {${d}_{0,1}$};
\path (P22)--(P32) node[midway,above,inner sep=1pt] {${d}_{1,1}$};
\path (P10)--(P11) node[midway,left,inner sep=1pt,yshift=-4pt] {$e_{0,-1}$};
\path (P11)--(P12) node[midway,left,inner sep=1pt,yshift=8pt] {$e_{0,0}$};
\path (P12)--(P13) node[midway,left,inner sep=1pt,yshift=4pt] {$e_{0,1}$};
\path (P20)--(P21) node[midway,right,inner sep=1pt,yshift=-4pt] {${e}_{1,-1}$};
\path (P21)--(P22) node[midway,right,inner sep=1pt,yshift=8pt] {${e}_{1,0}$};
\path (P22)--(P23) node[midway,right,inner sep=1pt,yshift=4pt] {${e}_{1,1}$};

\path (D0)--(D1) node[midway,above,inner sep=1pt,xshift=-4pt,text=cssgreen] {$\delta_{-1,0}(\bar{v})$};
\path (D1)--(D2) node[midway,above,inner sep=1pt,xshift=2pt,text=cssgreen] {$\delta_{0,0}(\bar{v})$};
\path (D2)--(D3) node[midway,above,inner sep=1pt,xshift=4pt,text=cssgreen] {$\delta_{1,0}(\bar{v})$};
\path (E0)--(E1) node[midway,right,inner sep=1pt,yshift=-4pt,text=cobalt] {$\epsilon_{0,-1}(\bar{u})$};
\path (E1)--(E2) node[midway,right,inner sep=1pt,yshift=12pt,text=cobalt] {$\epsilon_{0,0}(\bar{u})$};
\path (E2)--(E3) node[midway,right,inner sep=1pt,yshift=4pt,text=cobalt] {$\epsilon_{0,1}(\bar{u})$};

\end{tikzpicture}};
\node[anchor=north west] at (12.6,0.8) {
%
\begin{tikzpicture}[scale=1.2]

\tikzstyle{every node}=[font=\small]

\tikzstyle{pt} = [shape=circle,anchor=center,draw=black,fill=red,minimum size=4pt,inner sep=0pt]
\tikzstyle{edge} = [draw=black,dashed]
\tikzstyle{crv} = [draw=black,thick]
\tikzstyle{knot} = [shape=circle,draw=black,fill=white,minimum size=4pt,inner sep=0pt]

\pgfmathsetmacro{\ScaleFactor}{1.5}
\pgfmathsetmacro{\ubar}{0.2}
\pgfmathsetmacro{\vbar}{0.3}

\path ($\ScaleFactor*($(0,0)+(0,0)$)$) coordinate (p0);
\path ($\ScaleFactor*($(1,0)+(0.25,0)$)$) coordinate (p1);
\path ($\ScaleFactor*($(1,1)+(0.75,0)$)$) coordinate (p2);
\path ($\ScaleFactor*($(0,1)+(0.5,0)$)$) coordinate (p3);

\draw[edge] (p0)--(p1) node[midway,below] {$d_{0,0}$};
\draw[edge] (p1)--(p2) node[midway,right,xshift=3pt,yshift=7pt] {$e_{1,0}$};
\draw[edge] (p3)--(p2) node[midway,below] {$d_{0,1}$};
\draw[edge] (p0)--(p3) node[midway,right,xshift=2.5pt,yshift=7pt] {$e_{0,0}$};

\node[knot,xshift=-1.5pt,yshift=1.5pt] at (p0) {};
\node[knot,xshift=-1.5pt,yshift=1.5pt] at (p1) {};
\node[knot,xshift=-1.5pt,yshift=1.5pt] at (p2) {};
\node[knot,xshift=-1.5pt,yshift=1.5pt] at (p3) {};

\path [fill=black!40,fill opacity=0.3]
      (p0) to [crv,out=45,in=135,looseness=0.5] (p1)
       to [crv,out=90,in=135,looseness=0.5] (p2)
       to [crv,out=135,in=45,looseness=0.5] (p3)
       to [crv,out=135,in=90,looseness=0.5] (p0);

\draw[crv,out=45,in=135,looseness=0.5] (p0) to (p1);
\draw[crv,out=90,in=135,looseness=0.5] (p1) to (p2);
\draw[crv,out=45,in=135,looseness=0.5] (p3) to (p2);
\draw[crv,out=90,in=135,looseness=0.5] (p0) to (p3);

\path ($(p0)+\ScaleFactor*(1.3*\ubar,\vbar)$) coordinate (uv);
\node[pt] at (uv) {};
\node[anchor=west] at (uv) {$\bS(\bar{u},\bar{v})$};

\end{tikzpicture}};
\node[anchor=north west] at (6.6,-0.45) {
%
\begin{tikzpicture}[scale=1.2]
\definecolor{cobalt}{rgb}{0.0, 0.28, 0.67}
\definecolor{forestgreenweb}{rgb}{0.13, 0.55, 0.13}
\definecolor{cssgreen}{rgb}{0.0, 0.5, 0.0}

\tikzstyle{every node}=[font=\small]

\tikzstyle{pt} = [shape=circle,anchor=center,draw=black,fill=red,minimum size=4pt,inner sep=0pt]
\tikzstyle{crv} = [draw=black,thick,dashed]
\tikzstyle{augm} = [draw=cssgreen,line width=1.2pt]
\tikzstyle{augm2} = [draw=cobalt,line width=1.2pt]

\pgfmathsetmacro{\ScaleFactor}{1.5}
\pgfmathsetmacro{\Radius}{sqrt(5)/2}
\pgfmathsetmacro{\Angle}{acos(1/sqrt(5))}
\pgfmathsetmacro{\dim}{0.5}
\pgfmathsetmacro{\di}{1.25}
\pgfmathsetmacro{\dip}{0.5}
\pgfmathsetmacro{\eim}{0.4}
\pgfmathsetmacro{\ei}{1}
\pgfmathsetmacro{\eip}{0.4}
\pgfmathsetmacro{\ubar}{0.2}
\pgfmathsetmacro{\vbar}{0.3}
\pgfmathsetmacro{\de}{-0.3}
\pgfmathsetmacro{\tickL}{0.07}

\path ($\ScaleFactor*($(0,0)+(0,0)$)$) coordinate (p0);
\path ($\ScaleFactor*($(1,0)+(0.25,0)$)$) coordinate (p1);
\path ($\ScaleFactor*($(1,1)+(0.75,0)$)$) coordinate (p2);
\path ($\ScaleFactor*($(0,1)+(0.5,0)$)$) coordinate (p3);

\path ($(p0)-\ScaleFactor*\eim*\Radius*({cos(\Angle)},{sin(\Angle)})+\ScaleFactor*(-\dim,0)$) coordinate (tm1);
\path ($(tm1)+\ScaleFactor*\eim*\Radius*({cos(\Angle)},{sin(\Angle)})$) coordinate (t0);
\path ($(t0)+\ScaleFactor*\ei*\Radius*({cos(\Angle)},{sin(\Angle)})$) coordinate (t1);
\path ($(t1)+\ScaleFactor*\eip*\Radius*({cos(\Angle)},{sin(\Angle)})$) coordinate (t2);
\path ($(t0)+\ScaleFactor*\vbar*\ei*\Radius*({cos(\Angle)},{sin(\Angle)})$) coordinate (tbar);

\path ($(p0)-\ScaleFactor*\eim*\Radius*({cos(\Angle)},{sin(\Angle)})+\ScaleFactor*(-\dim,0)$) coordinate (sm1);
\path ($(sm1)+\ScaleFactor*\dim*(1,0)$) coordinate (s0);
\path ($(s0)+\ScaleFactor*\di*(1,0)$) coordinate (s1);
\path ($(s1)+\ScaleFactor*\dip*(1,0)$) coordinate (s2);
\path ($(s0)+\ScaleFactor*\ubar*\di*(1,0)$) coordinate (sbar);

\draw[crv,out=45,in=135,looseness=0.5] (p0) to (p1);
\draw[crv,out=90,in=135,looseness=0.5] (p1) to (p2);
\draw[crv,out=45,in=135,looseness=0.5] (p3) to (p2);
\draw[crv,out=90,in=135,looseness=0.5] (p0) to (p3);
\path ($(p0)+\ScaleFactor*(1.3*\ubar,\vbar)$) coordinate (uv);
\node[pt] at (uv) {};
\node[anchor=west] at (uv) {$\bT(\bar{x},\bar{y})=\bS(\bar{u},\bar{v})$};

\draw[augm2] (tm1)--(t0)--(t1)--(t2);
\draw[augm2] ($(tm1)-(\tickL,0)$)--($(tm1)+(\tickL,0)$);
\draw[augm2] ($(t0)-(\tickL,0)$)--($(t0)+(\tickL,0)$);
\draw[augm2] ($(t1)-(\tickL,0)$)--($(t1)+(\tickL,0)$);
\draw[augm2] ($(t2)-(\tickL,0)$)--($(t2)+(\tickL,0)$);
\draw ($(tbar)-(\tickL,0)$)--($(tbar)+(\tickL,0)$);
\path (tm1)--(t0) node[midway,left,inner sep=1pt,text=cobalt] {$\epsilon_{0,-1}(\bar{u})$};
\path (t0)--(t1) node[midway,left,inner sep=1pt,xshift=4pt,yshift=8pt,text=cobalt] {$\epsilon_{0,0}(\bar{u})$};
\path (t1)--(t2) node[midway,left,inner sep=1pt,text=cobalt] {$\epsilon_{0,1}(\bar{u})$};
\node[anchor=east] at (tbar) {$\bar{y}$};

\draw[augm] (sm1)--(s0)--(s1)--(s2);
\draw[augm] ($(sm1)-(0,\tickL)$)--($(sm1)+(0,\tickL)$);
\draw[augm] ($(s0)-(0,\tickL)$)--($(s0)+(0,\tickL)$);
\draw[augm] ($(s1)-(0,\tickL)$)--($(s1)+(0,\tickL)$);
\draw[augm] ($(s2)-(0,\tickL)$)--($(s2)+(0,\tickL)$);
\draw ($(sbar)-(0,\tickL)$)--($(sbar)+(0,\tickL)$);
\path (sm1)--(s0) node[midway,below,inner sep=2pt,text=cssgreen] {$\delta_{-1,0}(\bar{v})$};
\path (s0)--(s1) node[midway,below,inner sep=2pt,xshift=6pt,text=cssgreen] {$\delta_{0,0}(\bar{v})$};
\path (s1)--(s2) node[midway,below,inner sep=2pt,text=cssgreen] {$\delta_{1,0}(\bar{v})$};
\node[anchor=north] at (sbar) {$\bar{x}$};

\end{tikzpicture}};
\node[anchor=north west] at (5.5,1.4) {
%
\begin{tikzpicture}[scale=1.2]

\tikzstyle{pt} = [shape=circle,anchor=center,draw=black,fill=red,minimum size=4pt,inner sep=0pt]

\pgfmathsetmacro{\ScaleFactor}{1}
\pgfmathsetmacro{\ubar}{0.2}
\pgfmathsetmacro{\vbar}{0.3}
\pgfmathsetmacro{\di}{0.7}
\pgfmathsetmacro{\dti}{1.5}
\pgfmathsetmacro{\deltai}{\dti*(3*\vbar*\vbar-2*\vbar*\vbar*\vbar)+\di*(1-3*\vbar*\vbar+2*\vbar*\vbar*\vbar)}
\pgfmathsetmacro{\AxesL}{1.2}
\pgfmathsetmacro{\AxesUnit}{1}
\pgfmathsetmacro{\ArrowL}{1}
\pgfmathsetmacro{\FrameSpacing}{0.2}

\path (0,0) coordinate (Ouv);
\draw[-stealth] (Ouv)--($(Ouv)+\ScaleFactor*(\AxesL,0)$) node[anchor=west,inner sep=1pt] {\footnotesize$u$};
\draw[-stealth] (Ouv)--($(Ouv)+\ScaleFactor*(0,\AxesL)$) node[anchor=south,inner sep=1pt] {\footnotesize$v$};
\draw[black,fill=black!40,fill opacity=0.3] (Ouv) rectangle ($(Ouv)+\ScaleFactor*(\AxesUnit,\AxesUnit)$);

\node[anchor=north east,inner sep=1pt] at (Ouv) {\footnotesize$0$};
\node[anchor=north,inner sep=1pt] at ($(Ouv)+\ScaleFactor*(\AxesUnit,0)$) {\footnotesize$1$};
\node[anchor=east,inner sep=1pt] at ($(Ouv)+\ScaleFactor*(0,\AxesUnit)$) {\footnotesize$1$};
\path ($(Ouv)+\ScaleFactor*(\ubar*\AxesUnit,\vbar*\AxesUnit)$) coordinate (uv);
\node[pt] at (uv) {};
\node[anchor=west,inner sep=2pt,xshift=2pt] at (uv) {\footnotesize$(\bar{u},\bar{v})$};

\end{tikzpicture}};
\draw (6.05,-0.5) edge[out=-90,in=0,->,>=stealth',shorten >=1pt] (5,-2.1);
\draw (6.1,-0.5) edge[out=-90,in=180,->,>=stealth] (7.5,-2.45);
\draw (5,-2.65) edge[out=0,in=180,->,>=stealth] (7.5,-2.65);
\draw (11.5,-2.6) edge[out=0,in=-90,->,>=stealth] (13.2,-1.7);
\end{tikzpicture}
\caption{Construction of an augmented patch in the case $w=4$.}
\end{figure}
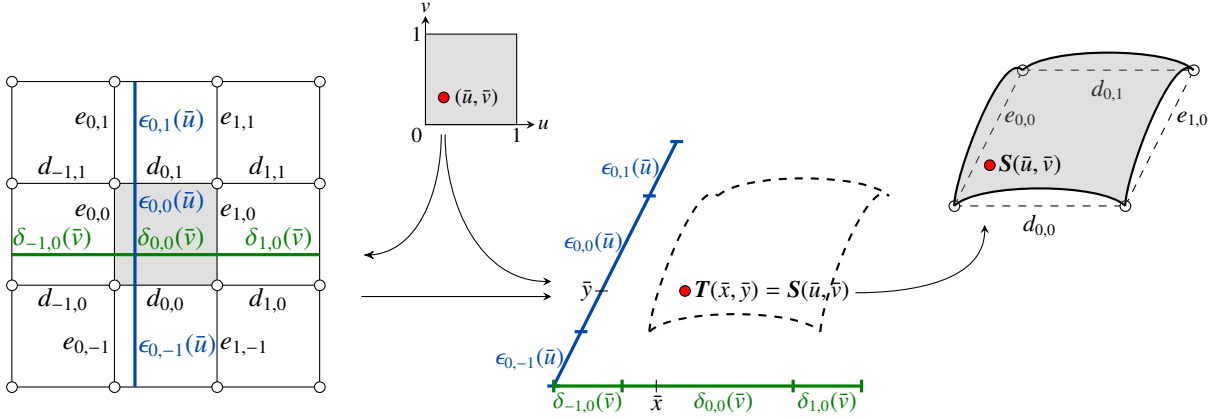

The remainder of this section is devoted to showing that the resulting composite surface has all the sought properties. We start by proving that the surface section curves form a network of univariate spline interpolants of class \class, where each curve has an independent non-uniform parametrization determined by the edge parameter intervals of the corresponding section polyline. 
In other words, if a network of section curves of class \class was independently determined before constructing the surface, then the surface would precisely be the transfinite interpolant of these curves.

\begin{prop}\label{prop:C0}
The section curves of an augmented surface based on a class of fundamental functions $D^gC^kP^mS^w$ are local univariate splines in the same class.
\end{prop}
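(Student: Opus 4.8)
\noindent\emph{Proof proposal.} The plan is to observe that the section curves of an augmented surface are nothing but the boundary curves of the patches \eqref{eq:S}: along $v\in\{0,1\}$ one traces (a piece of) the section curve through a mesh row, and along $u\in\{0,1\}$ through a mesh column. So it suffices to evaluate $\bS$ on one such boundary, recognize it as a segment of a univariate spline of class $D^gC^kP^mS^w$ in the form \eqref{eq:spline_di}, and then concatenate the contributions of all patches incident to a given section polyline.

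Concretely, I would fix the patch of Figure~\ref{fig:grid} and set $v=0$. By the defining conditions \eqref{eq:dh} of the local parametrization functions, $\delta_{i,0}(0)=d_{i,0}$ for every $i$, so \eqref{eq:poly_d} makes $\bd(0)=\bigl(d_{-w/2+1,0},\dots,d_{0,0},\dots,d_{w/2-1,0}\bigr)$ coincide with the local parameter vector relative to $\overline{\bp_{0,0}\bp_{1,0}}$ in the sense of \eqref{eq:dell}; moreover \eqref{eq:xy} gives $x=u\,\delta_{0,0}(0)=u\,d_{0,0}$, which sweeps $[0,d_{0,0}]$ as $u$ runs over $[0,1]$, and $y=v\,\epsilon_{0,0}(u)=0$. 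In the local notation $y=0$ is the left endpoint of the current parameter interval in the $v$-direction, and the fundamental functions satisfy $\psi_j(x_t)=\delta_{j,t}$; hence $\psi_j(0,\be)=\delta_{j,0}$ and only the term $j=0$ survives in \eqref{eq:S}--\eqref{eq:biv_fund}, leaving
\[
\bS(u,0)=\sum_{i=-\frac{w}{2}+1}^{\frac{w}{2}}\bp_{i,0}\,\psi_i\bigl(u\,d_{0,0},\,\bd(0)\bigr),
\]
which is verbatim the segment \eqref{eq:spline_di} of the univariate spline \eqref{eq:spline} through the mesh row $\{\bp_{i,0}\}_i$ with the parametrization induced by $\{d_{i,0}\}$, restricted to $\overline{\bp_{0,0}\bp_{1,0}}$. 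The computation at $v=1$ is identical with $\delta_{i,0}(1)=d_{i,1}$ and $y=\epsilon_{0,0}(u)$ now the right endpoint of the interval, so the surviving index is $j=1$ and one obtains the segment over $\overline{\bp_{0,1}\bp_{1,1}}$ of the univariate spline on the row $\{\bp_{i,1}\}_i$; the cases $u\in\{0,1\}$ are symmetric, with the roles of $d$ and $e$ exchanged. Gluing the boundary pieces produced by all patches sharing a fixed section polyline, which are consecutive segments of one and the same univariate spline each written in the form \eqref{eq:spline_di}, then shows the entire section curve to be a local univariate spline of class $D^gC^kP^mS^w$ (as a curve; as a per-patch map it differs only by the affine change $u\mapsto u\,d_{0,0}$ on each interval, which preserves degree, continuity and polynomial reproduction).

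Most of this is routine substitution. The point requiring care, which I would spell out, is the local index bookkeeping: one must check that the \qtext{current interval} convention built into the arguments $\psi_i(x,\bd)$ and $\psi_j(y,\be)$ of \eqref{eq:biv_fund} is normalized so that the surviving index is indeed $j=0$ at $v=0$ and $j=1$ at $v=1$, that the summation range in \eqref{eq:S} matches that of \eqref{eq:spline_di} after the shift $s=0$, and that two patches meeting along a section polyline yield the same boundary curve, the latter being immediate since each side evaluates to the same segment of the same univariate spline. No inequalities or limiting arguments are involved, so there is no genuine obstacle beyond this consistency check.
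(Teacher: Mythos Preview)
Your proposal is correct and follows essentially the same route as the paper: evaluate the patch on a boundary, use the interpolation property $\psi_j(x_t)=\delta_{j,t}$ of the fundamental functions to collapse the double sum in \eqref{eq:S}--\eqref{eq:biv_fund} to a single sum, and identify the result with the segment \eqref{eq:spline_di} of the appropriate univariate spline. The paper carries out the computation only on the boundary $v=1$ (your second case), but the argument is identical to yours and your additional remarks on the other boundaries and on gluing are consistent with, and slightly more explicit than, the paper's ``the statement can be extended to the entire network of section curves by locally applying the same argument''.
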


\begin{proof}
It suffices to show that the section curve segment bounded by $\bp_{0,1}$ and $\bp_{1,1}$ (see Figure \ref{fig:grid}) belongs to the unique local, non-uniform spline curve $\bF$ of class \class that interpolates the section polyline with vertices $\dots,\bp_{-1,1},\bp_{0,1},\bp_{1,1},\bp_{2,1}\dots$ and parameter intervals $\dots,d_{-1,1},d_{0,1},d_{1,1},d_{2,1}\dots$.
Hence the statement can be extended to the entire network of section curves by locally applying the same argument.

According to the local representation \eqref{eq:spline_di}, we have
\begin{equation}\label{eq:F_1}
\bF(x)=\sum_{i=-\frac{w}{2}+1}^{\frac{w}{2}} \bp_{i,1} \, \psi_{i}(x,\bar{\bd}), \quad x\in [0,d_{0,1}],
\end{equation}
with local parameter vector
\begin{equation}
\bar{\bd} = \left(d_{-\frac{w}{2}+1,1},\dots,d_{0,1},\dots,d_{\frac{w}{2}-1,1}\right).
\end{equation}

The patch boundary with endpoints $\bp_{0,1}$ and $\bp_{1,1}$ is described by \eqref{eq:S} for
$u\in[0,1]$, $v=1$.
At any such point $(u,v)$, the fundamental functions $\psi_j$ of the class \class, $j = -w/2+1,\dots,w/2-1$, have the values
$\psi_1(y,\be)=1$ and $\psi_j(y,\be)=0$, for any $j\neq 1$, and, by \eqref{eq:xy},
$x = u d_{0,1}$. 
Substituting in  \eqref{eq:S} we get
\begin{equation}
\bS(u,1) = \sum_{i=-\frac{w}{2}+1}^{\frac{w}{2}}\bp_{i,1}\psi_i(x,\bd).
\end{equation}
Hence, the statement follows by observing that
the local parameter vectors $\bd$ and $\bar{\bd}$ are equal and thus the curve segments represented by the above formula and by \eqref{eq:F_1} are identical.
\end{proof}


We shall now study the continuity of an augmented composite surface.
Preliminarily, we observe that each surface patch is defined as a composition of infinitely differentiable functions (we assume that the edge parameter intervals are nonzero, when the mesh vertices are distinct).
Moreover, Proposition \ref{prop:C0} guarantees the continuity of the constructed surface and entails that two neighboring augmented patches have $C^k$ continuous derivatives in the direction of their common boundary.
The following proposition shows that the same smoothness holds in the cross-boundary direction and in particular allows us to conclude that the constructed surface is globally $G^k$ continuous \footnote{$G^k$ continuity refers to agreement of derivatives agree after suitable reparametrization \cite{PetersHandbook02}.}. 
For ease of notation, we formulate the statement for the two patches $\bS$ and $\tilde{\bS}$ represented in Figure \ref{fig:grid}. It is immediate to see that the result holds when considering any two neighboring patches and their common boundary, provided appropriate adjustment of notation.

\begin{prop}\label{prop:Gamma}
Let $\bS$ and $\tilde{\bS}$ be the two adjacent augmented surface patches depicted in Figure \ref{fig:grid}, based on fundamental functions of class \class.
Then the derivatives across their common boundary satisfy the relation
\begin{equation}\label{eq:cross-der}
\dr{u}\bS(u,v)|_{(0,v)} = \Delta^r(v) \dr{u}\tilde{\bS}(u,v)|_{(1,v)}, \qquad r=0,\dots,k,
\end{equation}
where
\begin{equation}\label{eq:Gammav}
\Delta(v) = \frac{\delta_{0,0}(v)}{\delta_{-1,0}(v)},
\end{equation}
and $\delta_{0,0}$ and $\delta_{-1,0}$ are the local parametrization functions defined in \eqref{eq:dh}.
\end{prop}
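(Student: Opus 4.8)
The plan is to reduce the cross-boundary continuity statement to a one-parameter family of tensor-product identities, using the interpretation of $\bS$ and $\tilde{\bS}$ given just before the proposition, and then to exploit the univariate local structure of the fundamental functions. Fix $v \in [0,1]$ and let $\bar{\bd} = \bd(v)$, as in \eqref{eq:poly_d}, be the local parameter vector attached to the $u$-direction along the horizontal line at height $v$; note the crucial point that this same vector $\bar{\bd}$ is shared by both patches $\bS$ and $\tilde{\bS}$, since $\tilde{\bS}$ is obtained from $\bS$ by a shift of one unit in the first index and the relevant window of parameter intervals overlaps accordingly (compare the labeling in Figure \ref{fig:grid}). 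Along $(0,v)$ for $\bS$ we have, by \eqref{eq:xy}, $x = u\,\delta_{0,0}(v)$, so $x$ ranges over $[0,\delta_{0,0}(v)]$ as $u$ ranges over $[0,1]$; along $(1,v)$ for $\tilde{\bS}$ we have $x = u\,\delta_{-1,0}(v)$ ranging over $[0,\delta_{-1,0}(v)]$. In both cases the $v$-factors $\psi_j(y,\be)$ and the vector $\be$ are identical functions of $v$ evaluated at the common boundary, so they contribute a common multiplicative factor that cancels from the ratio; the entire content of the proposition therefore lives in the $u$-direction for each fixed $v$.

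So I would first reduce to the following univariate claim: if $\bG(x) = \sum_i \bp_i\,\psi_i(x,\bar{\bd})$ is a local spline of class \class\ on the parameter sequence with intervals $\bar{\bd}$, and if $\bp_0$ is one of its interpolation nodes sitting at parameter value $0$ with the segment to its left having length $\delta_{-1,0}(v)$ and the segment to its right having length $\delta_{0,0}(v)$, then writing $\bS(u,v)$ locally as $\bG(u\,\delta_{0,0}(v))$ (times the common $v$-factor) and $\tilde{\bS}(u,v)$ locally as $\bG(u\,\delta_{-1,0}(v))$ (same factor), the chain rule gives, for $r = 0,\dots,k$,
\begin{equation}
\dr{u}\bS(u,v)\big|_{(0,v)} = \big(\delta_{0,0}(v)\big)^r\,\bG^{(r)}(0^+), \qquad \dr{u}\tilde{\bS}(u,v)\big|_{(1,v)} = \big(\delta_{-1,0}(v)\big)^r\,\bG^{(r)}(0^-),
\end{equation}
up to the same common factor. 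Here $\bG^{(r)}(0^+)$ and $\bG^{(r)}(0^-)$ are the one-sided $x$-derivatives of the spline at the node. Since $\bG$ is $C^k$ by the defining property of the class \class, these one-sided derivatives coincide: $\bG^{(r)}(0^-) = \bG^{(r)}(0^+)$ for $r = 0,\dots,k$. Substituting, the ratio of the two expressions is exactly $\big(\delta_{0,0}(v)/\delta_{-1,0}(v)\big)^r = \Delta^r(v)$, which is \eqref{eq:cross-der}.

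The two steps that need genuine care, and which I expect to be the main obstacle, are: (i) verifying that the local parameter vector $\bar{\bd}$ really is the same for $\bS$ and $\tilde{\bS}$ — i.e.\ that the window \eqref{eq:dell} of $w-1$ consecutive intervals seen by $\bS$ along its left boundary is literally the window seen by $\tilde{\bS}$ along its right boundary — and that the index bookkeeping in the double sum \eqref{eq:S} matches after the shift $i \mapsto i-1$; this is where one must be scrupulous with the notation of Figure \ref{fig:grid}, and it is essentially the same observation that closed Proposition \ref{prop:C0}. And (ii) the chain-rule computation for $\partial_u^r$: because $x = u\,\delta_{0,0}(v)$ is linear in $u$ for fixed $v$, the $r$-th $u$-derivative is simply $\big(\delta_{0,0}(v)\big)^r$ times the $r$-th $x$-derivative, with no lower-order Faà di Bruno terms — so this step is clean, but one should state it explicitly and note that it is precisely the linearity of \eqref{eq:xy} in $u$ (with $v$ frozen) that makes the reparametrization factor a pure power $\Delta^r(v)$ rather than something with extra terms. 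Once (i) is in place, the $C^k$ continuity of the underlying univariate spline does the rest, and global $G^k$ continuity of the composite surface follows by combining \eqref{eq:cross-der} with Proposition \ref{prop:C0} and the smoothness of each individual patch.
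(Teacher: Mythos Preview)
Your high-level strategy---reduce the cross-boundary relation to a univariate $C^k$ statement and then invoke the chain rule---is the paper's strategy as well, and the endgame (matching one-sided $x$-derivatives via $C^k$ continuity of the underlying spline, then scaling by the ratio of the $\delta$'s) is correct. But the reduction itself has a genuine gap, and one lesser oversight.

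The gap is in treating the factors $\psi_j(y,\be)$ as common multiplicative constants under $\partial_u^r$. They are not: by \eqref{eq:xy} and \eqref{eq:poly_e}, both $y = v\,\epsilon_{0,0}(u)$ and $\be = \be(u)$ depend on $u$, so $\psi_j\big(v\,\epsilon_{0,0}(u),\be(u)\big)$ is a function of $u$ and contributes terms to $\partial_u^r \bS$. The Leibniz expansion therefore contains cross terms $\partial_u^q \psi_i \cdot \partial_u^{r-q}\psi_j$ with $q<r$, and these do not cancel ``from the ratio'' a priori. What makes them vanish \emph{at the boundary} $u=0$ (resp.\ $u=1$) is precisely the defining condition $\epsilon_{0,j}^{(s)}(0) = \epsilon_{0,j}^{(s)}(1) = 0$ for $s=1,\dots,k$ in \eqref{eq:eh}: a Fa\`a di Bruno argument then gives $\partial_u^{r-q}\psi_j\big|_{u=0,1} = 0$ for all $q<r\leq k$. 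This is the structural reason the vanishing-derivative conditions were imposed on the local parametrization functions in the first place, and the paper remarks explicitly (after Lemma~\ref{prop:der_scaling}) that without them the $G^k$ join would fail. Your argument skips this entirely; as written, it would appear to go through even if the $\epsilon_{0,j}$ were merely interpolating polynomials, which it should not.

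The lesser point is your claim (i) that the window of $w-1$ intervals is literally the same for $\bS$ and $\tilde{\bS}$. It is not: the two vectors $\bd(v)$ and $\tilde{\bd}(v)$ in \eqref{eq:beS}--\eqref{eq:beStilde} are shifted by one, overlapping in $w-2$ entries but differing at the outer ends ($\delta_{w/2-1,0}$ versus $\delta_{-w/2,0}$). Your ``global spline $\bG$'' picture is morally right---one can embed both in a longer parameter sequence---but to close the argument in the local representation you must invoke the support property of the $\psi_i$ to show that the non-overlapping entry on each side does not affect values or derivatives at the common node; the paper does this explicitly (see Figure~\ref{fig:prop_Gamma}).
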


The above result can be shown by direct verification, and we postpone the proof to the end of this section. For now it is important to observe that relation \eqref{eq:Gammav} provides the scaling $\Delta(v)$ that relates the cross-boundary derivatives of $\bS$ and $\tilde{\bS}$. In general, $\Delta(v)$  is different at each boundary point, but varies smoothly along the boundary, being defined as the ratio of two positive polynomials.
As a consequence of Proposition \ref{prop:Gamma} we obtain the following result.


\begin{prop}\label{prop:Ck}
Two augmented surface patches built upon a class of fundamental functions \class join along their common boundary with $G^k$-continuity.
\end{prop}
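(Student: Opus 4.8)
The plan is to exhibit an explicit, admissible reparametrization of $\tilde{\bS}$ after which the two patches meet with ordinary $C^k$ contact along the edge $\overline{\bp_{0,0}\bp_{0,1}}$ (meaning: all partial derivatives of total order $\le k$ agree there); this is exactly what $G^k$-continuity asks for. Before that I would collect the two ingredients already available. First, applying Proposition~\ref{prop:C0} to the section polylines that run in the $v$-direction shows that the common boundary is literally the same parametrized spline segment when read off $\bS$ at $(0,v)$ and off $\tilde{\bS}$ at $(1,v)$; hence $\bS(0,v)=\tilde{\bS}(1,v)$ identically in $v\in[0,1]$, and therefore $\partial_v^{s}\bS(0,v)=\partial_v^{s}\tilde{\bS}(1,v)$ for every $s$. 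Second, Proposition~\ref{prop:Gamma} gives $\partial_u^{r}\bS(0,v)=\Delta^{r}(v)\,\partial_u^{r}\tilde{\bS}(1,v)$ for $r=0,\dots,k$, where $\Delta(v)=\delta_{0,0}(v)/\delta_{-1,0}(v)$ is smooth and strictly positive, being the quotient of two positive polynomials (cf.\ the monotonicity observation right after \eqref{eq:eh}).

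Next I would introduce the change of variables $\phi(u,v)=\big(\Phi(u,v),\,v\big)$ with $\Phi(u,v):=1+\Delta(v)\,u$, on a neighborhood of $\{u=0\}$. Its Jacobian determinant equals $\Delta(v)>0$, so $\phi$ is a regular reparametrization that fixes the boundary and sends $(0,v)$ to $(1,v)$; moreover $\Phi$ is affine in $u$, so $\partial_u^{j}\Phi\equiv 0$ for $j\ge 2$. I would then differentiate $\tilde{\bS}\circ\phi$ in $u$ at $u=0$: by the Fa\`a di Bruno formula all terms involving a second or higher $u$-derivative of $\Phi$ drop out, leaving $\partial_u^{r}\big[\tilde{\bS}(\Phi(u,v),v)\big]\big|_{u=0}=\big(\partial_u\Phi\big)^{r}\,\partial_{u}^{r}\tilde{\bS}(u,v)\big|_{(1,v)}=\Delta^{r}(v)\,\partial_u^{r}\tilde{\bS}(1,v)$. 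By Proposition~\ref{prop:Gamma} this coincides with $\partial_u^{r}\bS(0,v)$ for every $r=0,\dots,k$. (Here one reads the $u$-derivatives of $\tilde{\bS}$ at $\tilde u=1$ as one-sided derivatives, equivalently uses the obvious smooth extension of the locally polynomial map $\tilde{\bS}$ past its parameter square; no genuine off-domain evaluation is required.)

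Finally I would pass from these pure cross-boundary identities to all partials of total order $\le k$. Each equality $\partial_u^{r}\bS(0,v)=\partial_u^{r}\big[\tilde{\bS}\circ\phi\big](0,v)$, $r=0,\dots,k$, holds as an identity between ($C^k$) functions of $v$; differentiating it $s$ times in $v$ and exchanging the order of differentiation yields $\partial_v^{s}\partial_u^{r}\bS(0,v)=\partial_v^{s}\partial_u^{r}\big[\tilde{\bS}\circ\phi\big](0,v)$ for all $r\le k$ and all $s$, in particular for $r+s\le k$; the case $r=0$ is the first boundary fact above. Thus $\bS$ and $\tilde{\bS}\circ\phi$ have $C^k$ contact along the common edge (including its endpoints, since $\Delta$ is positive there too), i.e.\ $\bS$ and $\tilde{\bS}$ join with $G^k$-continuity.

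The step I expect to demand the most care is this last upgrade: one must verify that differentiating the Proposition~\ref{prop:Gamma} relations in $v$ is legitimate — which it is, since both sides are at least $C^k$ in $v$ — and that the resulting mixed relations really are the ones prescribed by the chain rule for $\tilde{\bS}\circ\phi$, rather than merely consistent with it; using the affine form of $\Phi$ keeps this bookkeeping short. A lesser technical point, worth a line, is the smoothness accounting for $\tilde{\bS}$ near $\tilde u=1$ — it is a composition of functions that are $C^\infty$ away from finitely many internal knot lines and $C^k$ across them, so all derivatives of order $\le k$ used above are available — together with the one-sided reading of boundary derivatives already mentioned.
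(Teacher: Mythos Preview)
Your proof is correct and follows essentially the same approach as the paper: you introduce the very same reparametrization $\phi(u,v)=(1+\Delta(v)u,\,v)$ that the paper calls $\vect{\rho}$, and conclude $G^k$-continuity from Proposition~\ref{prop:Gamma}. Your version is in fact more detailed---you spell out the Fa\`a di Bruno step (which the paper leaves as ``immediate to verify''), explicitly upgrade to mixed partials by differentiating in $v$, and flag the one-sided/regularity issues---whereas the paper's proof is a terse three-line invocation of the reparametrization and \eqref{eq:cross-der}.
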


\begin{proof}
Without loss of generality, we can take the two patches  $\bS$ and $\tilde{\bS}$ in Proposition \ref{prop:Gamma} and consider the map
\begin{equation}
\vect{\rho}(u,v) =
\begin{pmatrix}
\Delta(v) u+1\\
v
\end{pmatrix},
\end{equation}
where $\Delta(v)$ is given by \eqref{eq:Gammav}. 
The above function $\vect{\rho}$ is a $C^k$ reparametrization between the two domains of $\bS$ and $\tilde{\bS}$.
Moreover, exploiting relation \eqref{eq:cross-der}, it is immediate to verify that
\begin{equation}
\dr{u}\bS(u,v)|_{(0,v)} = \dr{u}\left(\tilde{\bS}\circ \vect{\rho}\right)(u,v)|_{(0,v)}, \qquad r=0,\dots,k.
\end{equation}
The above observations entail that the two considered patches join along their common boundary with $G^k$-continuity.
\end{proof}

To conclude this section we provide a proof of Proposition \ref{prop:Gamma}.
Preliminarily, the following Lemma is stated as an independent result, since it will be later recalled in Section \ref{sec:augmented_extraordinary}.

\begin{lem}\label{prop:der_scaling}
Let $\bS$ be an augmented surface patch of the form \eqref{eq:S}. Then its cross-boundary derivatives satisfy the following relations
\begin{equation}\label{eq:der_scaling}
\begin{aligned}
\dr{u} \bS(u,v) |_{(\bar{u},v)}&= \dr{x} \bS\Big(x,y,\bd,\be\Big)|_{(\bar{u},v)} \delta_{0,0}^r(v),
\,&\bar{u}=0,1,\\[1ex]
\dr{v} \bS(u,v)|_{(u,\bar{v})} &= \dr{y} \bS\Big(x,y,\bd,\be\Big)|_{(u,\bar{v})} \epsilon_{0,0}^r(u),
\, &\bar{v}=0,1,
\end{aligned}
\end{equation}
where $\delta_{0,0}$ and $\epsilon_{0,0}$ are the local parametrization functions defined in \eqref{eq:dh}--\eqref{eq:eh}.
\end{lem}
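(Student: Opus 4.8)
The plan is to treat the augmented patch \eqref{eq:S}--\eqref{eq:biv_fund} as a composition and to track exactly how the variable $u$ enters it. Fix $v$ and put $\phi(u):=\bS(u,v)$. By \eqref{eq:poly_d}, \eqref{eq:poly_e} and \eqref{eq:xy}, the patch \eqref{eq:S} can be written as $\phi(u)=H\big(x(u),y(u),\be(u)\big)$, where $H$ is what \eqref{eq:der_scaling} denotes by $\bS(x,y,\bd,\be)$, namely the right-hand side of \eqref{eq:S} regarded as a function of its four displayed arguments, with $\bd=\bd(v)$ held fixed since it does not depend on $u$; here $x(u)=u\,\delta_{0,0}(v)$, $y(u)=v\,\epsilon_{0,0}(u)$ and $\be(u)=\big(\epsilon_{0,-w/2+1}(u),\dots,\epsilon_{0,w/2-1}(u)\big)$. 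Thus $u$ enters $\phi$ through the three ``channels'' $x$, $y$ and $\be$, and the first line of \eqref{eq:der_scaling} says precisely that, when we differentiate at a boundary value $\bar u\in\{0,1\}$, only the $x$-channel survives and it contributes exactly one factor $\delta_{0,0}(v)$ per derivative.

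I would then record the two structural facts that make this happen. \emph{(i)} With $v$ fixed, $u\mapsto x(u)=u\,\delta_{0,0}(v)$ is affine with slope $\delta_{0,0}(v)$, so $x'(u)\equiv\delta_{0,0}(v)$ and $\ds{u}x\equiv0$ for $s\geq2$. \emph{(ii)} By the vanishing-derivative conditions \eqref{eq:eh}, every local parametrization function $\epsilon_{0,\ell}$ satisfies $\epsilon_{0,\ell}^{(s)}(0)=\epsilon_{0,\ell}^{(s)}(1)=0$ for $s=1,\dots,k$; hence $\ds{u}y(\bar u)=v\,\epsilon_{0,0}^{(s)}(\bar u)=0$ and $\ds{u}\be(\bar u)=0$ for all $s=1,\dots,k$ and $\bar u\in\{0,1\}$. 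Now I would apply the multivariate Fa\`a di Bruno formula to $\dr{u}\phi=\dr{u}H\big(x(u),y(u),\be(u)\big)$ for $0\le r\le k$: each term of the expansion is a partial derivative of $H$ times a product of inner derivative factors $\ds{u}x$, $\ds{u}y$, $\ds{u}\be_\ell$ of orders $s\geq1$ summing to $r$, the order of the $H$-derivative equalling the number of such factors. Evaluating at $\bar u\in\{0,1\}$ and invoking \emph{(i)}--\emph{(ii)}, every term containing a factor in the $y$- or $\be$-channel vanishes, and every $x$-factor of order $\geq2$ vanishes; the only survivor is the term with exactly $r$ factors, all equal to $x'(\bar u)=\delta_{0,0}(v)$, with $H$ differentiated $r$ times in its first slot. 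Its coefficient is $1$ — it is simply the single-variable chain rule carried out through the $x$-channel alone, which is legitimate because $x'$ is constant in $u$. This gives $\dr{u}\bS(u,v)|_{(\bar u,v)}=\dr{x}\bS\big(x,y,\bd,\be\big)|_{(\bar u,v)}\,\delta_{0,0}^r(v)$, i.e.\ the first relation; the case $r=0$ is the trivial identity $\bS(\bar u,v)=H$ evaluated at the corresponding point.

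The second line of \eqref{eq:der_scaling} then follows by the completely symmetric argument, exchanging the roles of $u$ and $v$, of $x$ and $y$, and of $\delta$ and $\epsilon$: now $y(u,v)=v\,\epsilon_{0,0}(u)$ is affine in $v$ with slope $\epsilon_{0,0}(u)$, while $x=u\,\delta_{0,0}(v)$ and $\bd=\bd(v)$ contribute only $v$-derivatives of the functions $\delta_{0,\ell}$, which vanish up to order $k$ at $v=0,1$ by \eqref{eq:dh}. I expect the main (and essentially only) obstacle to be organizational: presenting the Fa\`a di Bruno bookkeeping transparently enough that the ``all non-$x$ channels vanish up to order $k$'' reduction is clearly visible, rather than expanding $\dr{u}\bS$ brute-force inside the $w\times w$ double sum. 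It is also worth stating explicitly the restriction $r\le k$: conditions \eqref{eq:dh}--\eqref{eq:eh} annihilate derivatives of the local parametrization functions only up to order $k$ (and, correspondingly, the fundamental functions $\psi_i$ are only $C^k$), so the statement and its proof are confined to $r=0,1,\dots,k$ — exactly the range required by Proposition~\ref{prop:Gamma}.
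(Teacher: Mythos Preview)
Your argument is correct and follows the same approach as the paper: the chain rule (you invoke Fa\`a di Bruno explicitly), the affine relation $x=u\,\delta_{0,0}(v)$ from \eqref{eq:xy}, and the vanishing of $\epsilon_{0,j}^{(s)}$ (resp.\ $\delta_{i,0}^{(s)}$) at $0,1$ for $s=1,\dots,k$ from \eqref{eq:dh}--\eqref{eq:eh}. The paper's own proof is a one-line sketch listing precisely these three ingredients; your version simply unpacks the bookkeeping.
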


\begin{proof}
The result immediately follows by using the chain rule, the relation \eqref{eq:xy}
and the fact that
$\delta_{i,0}^{(r)}(v)=\epsilon_{0,j}^{(r)}(u)=0$, $r=1,\dots,k$, $u,v=0,1$ and $i,j=-\frac{w}{2}+1,\dots,\frac{w}{2}-1$.
\end{proof}

We observe that the above Lemma is based on the fact that the local parametrization functions \eqref{eq:dh}--\eqref{eq:eh} have vanishing derivatives at the endpoints of their interval of definition.
As we will see shortly, this property also plays a prominent role in proving Proposition \ref{prop:Gamma}.
An interesting consequence is that, in order to prescribe a correct set of local parametrization functions, it is not sufficient to require that these functions interpolate the edge parameter intervals. In fact, although this strategy may still allow us to interpolate a network of independently parameterized curves, without the aforementioned constraint on the derivatives, we would not be able to ensure that the composite surface is globally $G^k$ continuous.

\begin{proof}[Proof of Proposition \ref{prop:Gamma}]
Without loss of generality, we can assume that the local reference system $uv$ of the two patches is oriented as illustrated in Figure \ref{fig:grid}. Therefore, the common boundary of $\bS$ and $\tilde{\bS}$ corresponds to $(0,v)$ for $\bS$ and $(1,v)$ for $\tilde{\bS}$. We also refer to the figure for the labeling of all the relevant quantities involved.
To prove the statement, we shall verify that \eqref{eq:cross-der}--\eqref{eq:Gammav}
hold for any arbitrary $v\in [0,1]$.

By differentiating formulae \eqref{eq:S}--\eqref{eq:biv_fund} in the cross-boundary direction $u$, we obtain
\begin{equation}\label{eq:derS_r}
\dr{u}\bS(u,v) =
\sum_{i=-\frac{w}{2}+1}^{\frac{w}{2}} \sum_{j=-\frac{w}{2}+1}^{\frac{w}{2}} \bp_{i,j}
\sum_{q=0}^r \binom{r}{q}
\, \dq{u}\psi_i\Big(u \delta_{0,0}(v),\bd(v)\Big)
\, \drq{u} \psi_j\Big(v \epsilon_{0,0}(u),\be(u)\Big) ,
\end{equation}
where
\begin{equation}\label{eq:beS}
\bd(v)=\left(\delta_{-\frac{w}{2}+1,0}(v),\dots,\delta_{0,0}(u),\dots,\delta_{\frac{w}{2}-1,0}(v)\right)\qquad \text{and} \qquad \be(u)=\left(\epsilon_{0,-\frac{w}{2}+1}(u),\dots,\epsilon_{0,0}(u),\dots,\epsilon_{0,\frac{w}{2}-1}(u)\right).
\end{equation}
From the definition of the local parametrization functions in \eqref{eq:dh}--\eqref{eq:eh}, we have
$\epsilon_{h,0}'(0)=0$, $h = -\frac{w}{2}+1,\dots,\frac{w}{2}-1$ and therefore
\begin{equation}
\left.\frac{\partial}{\partial u} \psi_j\Big(v \epsilon_{0,0}(u),\be(u)\Big) \right|_{(0,v)}= \left.\sum_{h=-\frac{w}{2}+1}^{\frac{w}{2}-1} \frac{\partial \psi_j}{\partial \epsilon_{h,0}} \frac{\partial \epsilon_{h,0}}{\partial u}\right|_{(0,v)} = 0.
\end{equation}
Moreover, since $\epsilon_{h,0}^{(r)}(u)$ vanishes at $u=0$, for all $r=1,\dots,k$, by iterating the differentiation process (cf.\ Fa\`{a} di Bruno's law) it can be easily verified that
%
\begin{equation}
\left.\drq{u} \psi_j\Big(v \epsilon_{0,0}(u),\be(u)\Big) \right|_{(0,v)}= 0, \qquad q=0,\dots,r-1, \qquad r=1,\dots,k.
\end{equation}
%


In addition, recalling that the fundamental functions \class have continuity $C^k$ and support width $w$, there holds
\begin{equation}
\left.\dr{u}\psi_{\frac{w}{2}}\Big(u \delta_{0,0}(v),\bd(v)\Big)\right|_{(0,v)}=0, \qquad r=1,\dots,k.
\end{equation}

Using the last two identities above, at any boundary point equation \eqref{eq:derS_r} reduces to
\begin{equation}\stepcounter{equation}\label{eq:S_bound0}
\left.\dr{u}\bS(u,v)\right|_{(0,v)} =
\sum_{i=-\frac{w}{2}+1}^{\frac{w}{2}-1} \sum_{j=-\frac{w}{2}+1}^{\frac{w}{2}} \bp_{i,j}\,
\psi_j\Big(v e_{0,0},\be(0)\Big) \, \left.\dr{u} \psi_i\Big(u \delta_{0,0}(v),\bd(v)\Big)\right|_{(0,v)}.
\end{equation}

We now turn to considering the neighboring surface patch $\tilde{\bS}$.
Denoted $\tilde{\bd}(v)$ and $\tilde{\be}(u)$ the local parameter vectors for $\tilde{\bS}$, we have
\begin{equation}\label{eq:beStilde}
\tilde{\bd}(v)=\left(\delta_{-\frac{w}{2},0}(v),\dots,\delta_{-1,0}(u),\dots,\delta_{\frac{w}{2}-2,0}(v)\right), \qquad \tilde{\be}(u)=\left(\epsilon_{-1,-\frac{w}{2}+1}(u),\dots,\epsilon_{-1,0}(u),\dots,\epsilon_{-1,\frac{w}{2}-1}(u)\right)
\end{equation}
and thus

\begin{equation}\label{eq:dertildeS_r}
\dr{u}\tilde{\bS}(u,v) =
\sum_{i=-\frac{w}{2}}^{\frac{w}{2}-1} \sum_{j=-\frac{w}{2}+1}^{\frac{w}{2}} \bp_{i,j}
\sum_{q=0}^r {r \choose q}
\, \dq{u}\psi_i\Big(u \delta_{-1,0}(v),\tilde{\bd}(v)\Big)
\, \drq{u} \psi_j\Big(v \epsilon_{-1,0}(u),\tilde{\be}(u)\Big).
\end{equation}
As before, it can be easily verified that
\begin{equation}
\left.\drq{u} \psi_j\Big(v \epsilon_{-1,0}(u),\tilde{\be}(u)\Big)\right|_{(1,v)}=0, \qquad q=0,\dots,r-1, \qquad r=1,\dots,k.
\end{equation}

Moreover, from the compact support of the fundamental functions follows that
\begin{equation}
\left.\dr{u}\psi_{-\frac{w}{2}}\Big(u \delta_{-1,0}(v),\tilde{\bd}(v)\Big)\right|_{(1,v)}=0,\qquad r=1,\dots,k,
\end{equation}
and, observing that $\be(0)=\tilde{\be}(1)$, we obtain
\begin{equation}\label{eq:S_bound1}
\left.\dr{u}\tilde{\bS}(u,v)\right|_{(1,v)}=
\sum_{i=-\frac{w}{2}+1}^{\frac{w}{2}-1} \sum_{j=-\frac{w}{2}+1}^{\frac{w}{2}} \bp_{i,j}
\, \psi_j\Big(v e_{0,0},\be(0)\Big)\, \left.\dr{u} \psi_i\Big(u \delta_{-1,0}(v),\tilde{\bd}(v)\Big)\right|_{(1,v)}.
\end{equation}
Now, in view of \eqref{eq:S_bound0} and \eqref{eq:S_bound1}, it only remains to show that the derivatives of order $r=1,\dots,k$ of the functions $\psi_i$ agree after the scaling $\Delta(v)$ in \eqref{eq:Gammav}.

Comparing the expressions in \eqref{eq:beS} and \eqref{eq:beStilde} we can see that the two vectors $\bd(v)$ and $\tilde{\bd}(v)$ are one a \qtext{shifted} version of the other and therefore \qtext{overlap} almost everywhere, with the exception of the first element in $\tilde{\bd}$ and the last one in $\bd$ (Figure \ref{fig:prop_Gamma_a} schematizes the situation for a class of fundamental functions having support width $w=4$).
\begin{figure}\label{fig:prop_Gamma}
\centering
\subfigure[]{\includegraphics[width=0.4\textwidth]{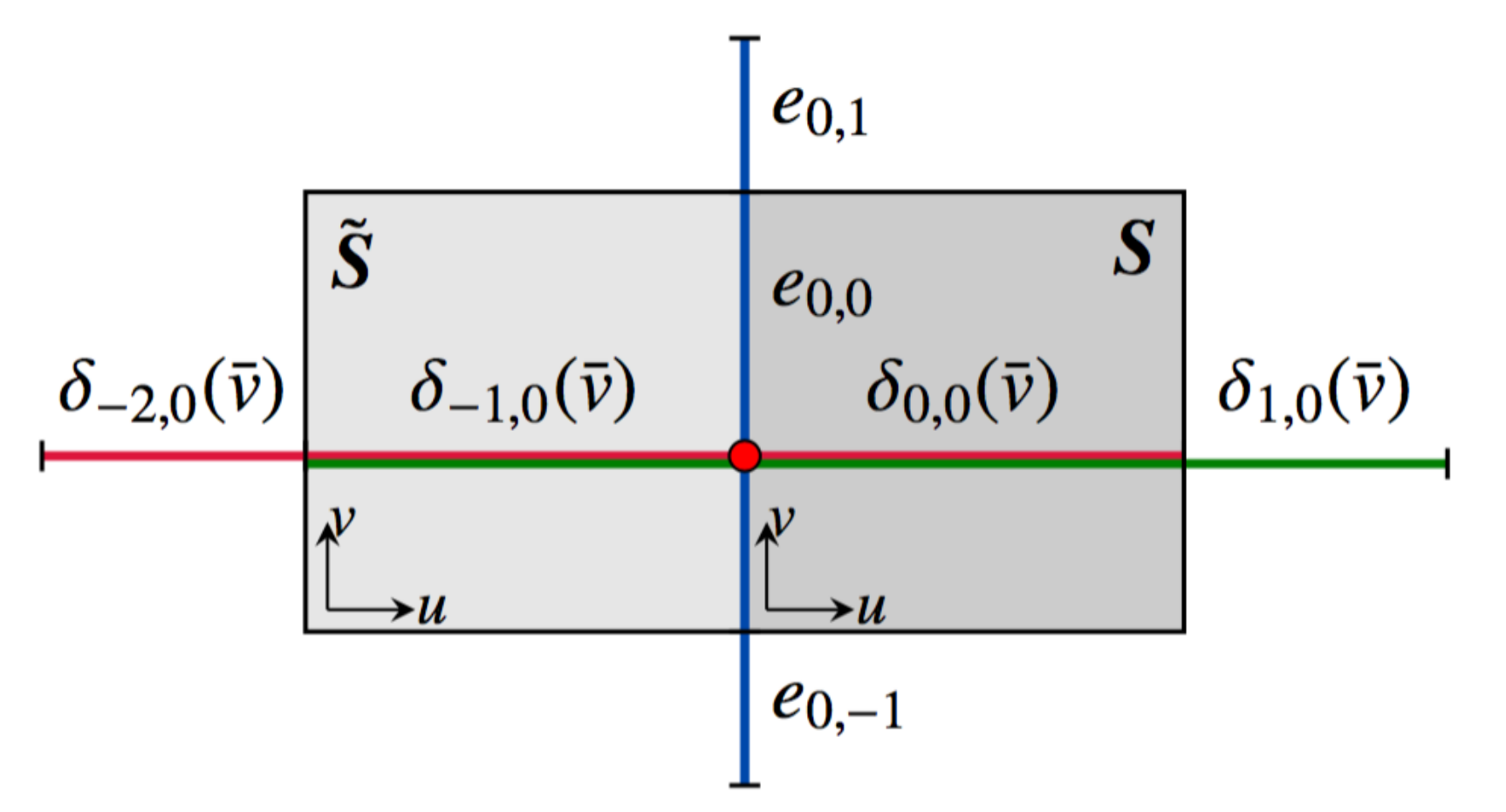}\label{fig:prop_Gamma_a}}
\hspace{1cm}
\subfigure[]{\includegraphics[width=0.45\textwidth]{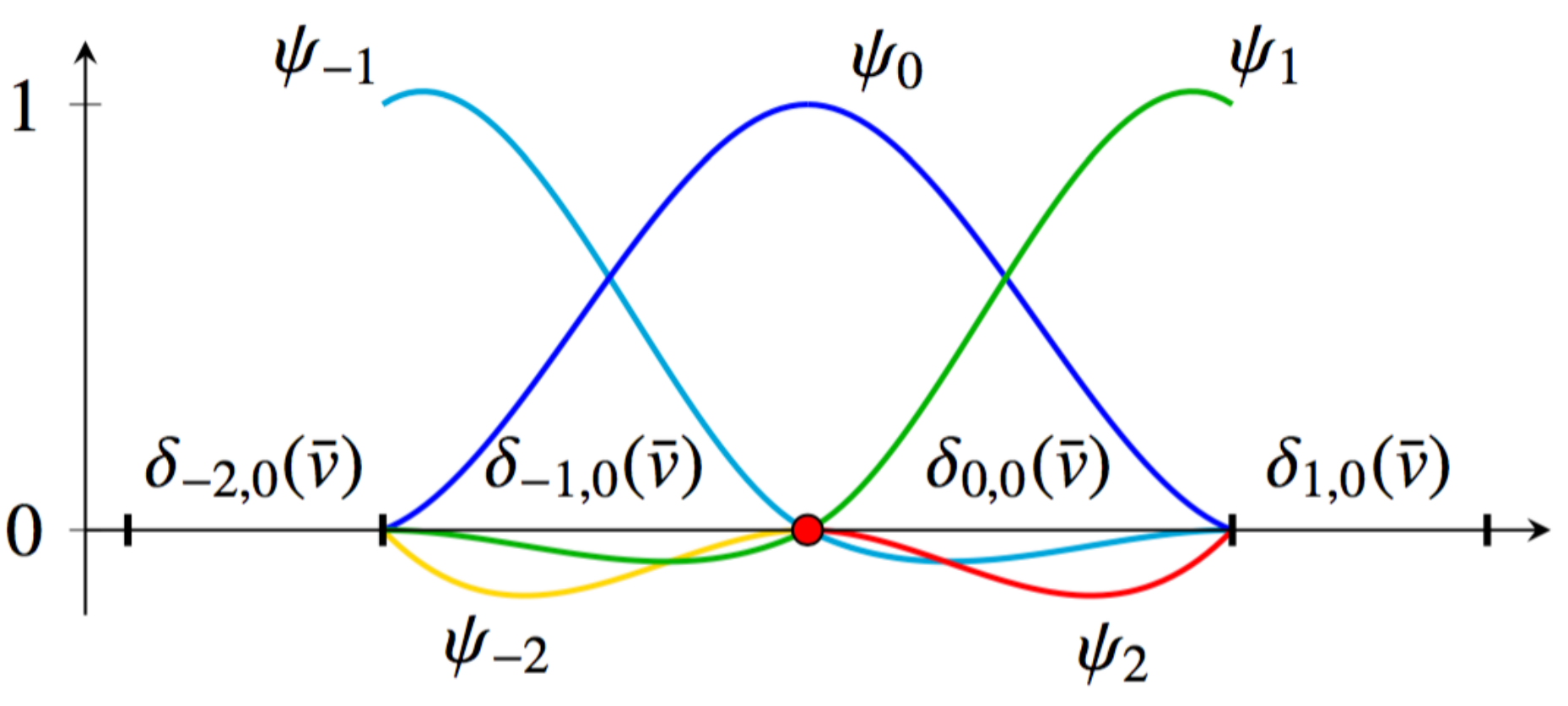}\label{fig:prop_Gamma_b}}
\caption{A class of fundamental functions with $w=4$.
\subref{fig:prop_Gamma_a} Local parameter vectors at the boundary point $(0,\bar{v})$ and $(1,\bar{v})$ respectively for the patches $\bS$ and $\tilde{\bS}$ involved in the proof of Proposition \ref{prop:Gamma}; \subref{fig:prop_Gamma_b} Fundamental functions defined on the corresponding parameter intervals in the cross-boundary direction.}
\end{figure}
These two different intervals are uninfluential to the value and derivatives of the
fundamental functions at a boundary point (see Figure \ref{fig:prop_Gamma_b}).
As a consequence, at such a point, the fundamental functions defined on $\bd(v)$ and $\tilde{\bd}(v)$ agree together with their derivatives up to order $k$, namely
\begin{equation}
\left.\dr{x}\psi_i(x,\bd(v))\right|_{x=0} = \left.\dr{x}\psi_i(x,\tilde{\bd}(v))\right|_{x=\delta_{-1,0}(v)}, \qquad i=-\frac{w}{2}-1,\dots,\frac{w}{2}+1, \qquad r=0,\dots,k.
\end{equation}
The last relation and \eqref{eq:der_scaling} entail that
\begin{equation}\label{eq:Gk}
\left.\dr{u}\bS(u,v)\right|_{(0,v)} = \left( \frac{\delta_{0,0}(v)}{\delta_{-1,0}(v)}\right)^r \left.\dr{u}\tilde{\bS}(u,v)\right|_{(1,v)},
\end{equation}
which concludes the proof.
\end{proof}
\subsection{Examples}
\label{sec:examples_regular}
\begin{figure}[t]
\centering
\subfigure[Input mesh]
{\includegraphics[width=0.95\textwidth/4]{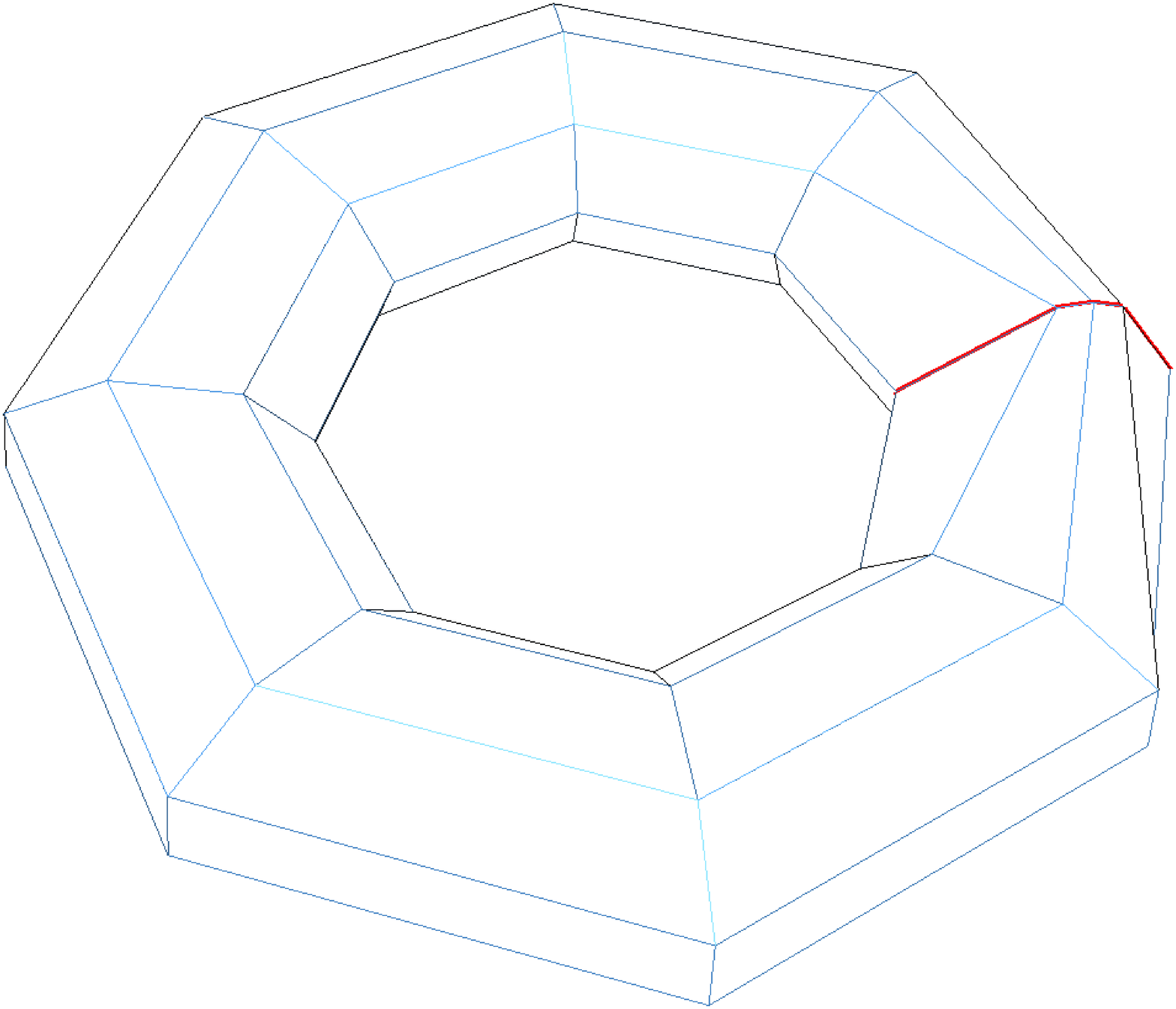}\label{fig:ex1_1}}
\hfill
\subfigure[Tensor product surface]
{\includegraphics[width=0.95\textwidth/4]{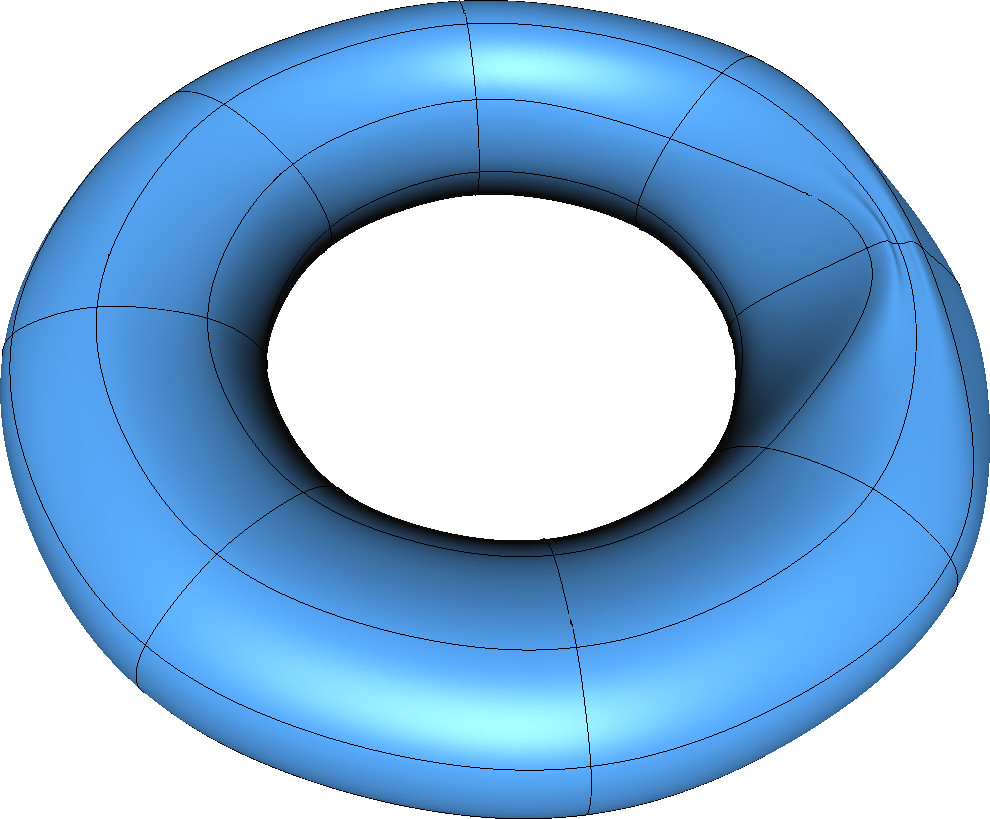}\label{fig:ex1_2}}
\hfill
\subfigure[Zoom of \ref{fig:ex1_2}]
{\includegraphics[width=0.8\textwidth/4]{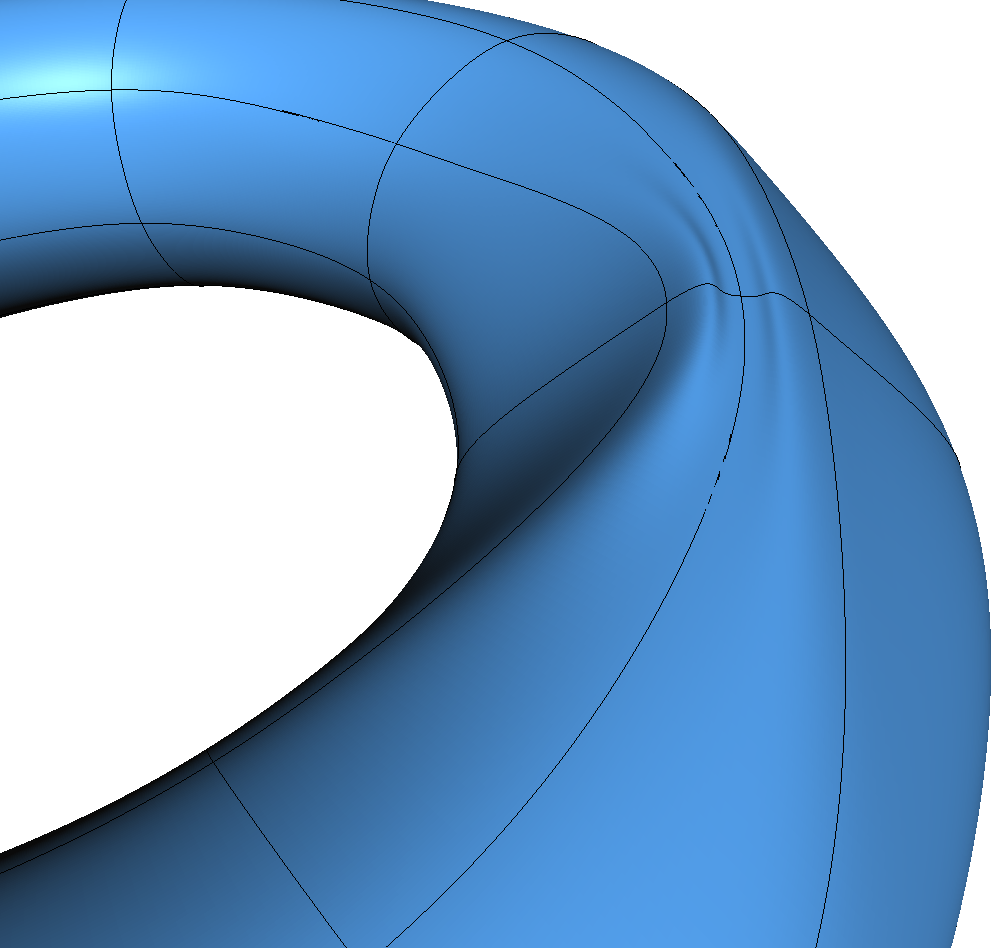}\label{fig:ex1_3}}
\hfill
\subfigure[Mean curvature of \ref{fig:ex1_2}]
{\includegraphics[width=0.8\textwidth/4]{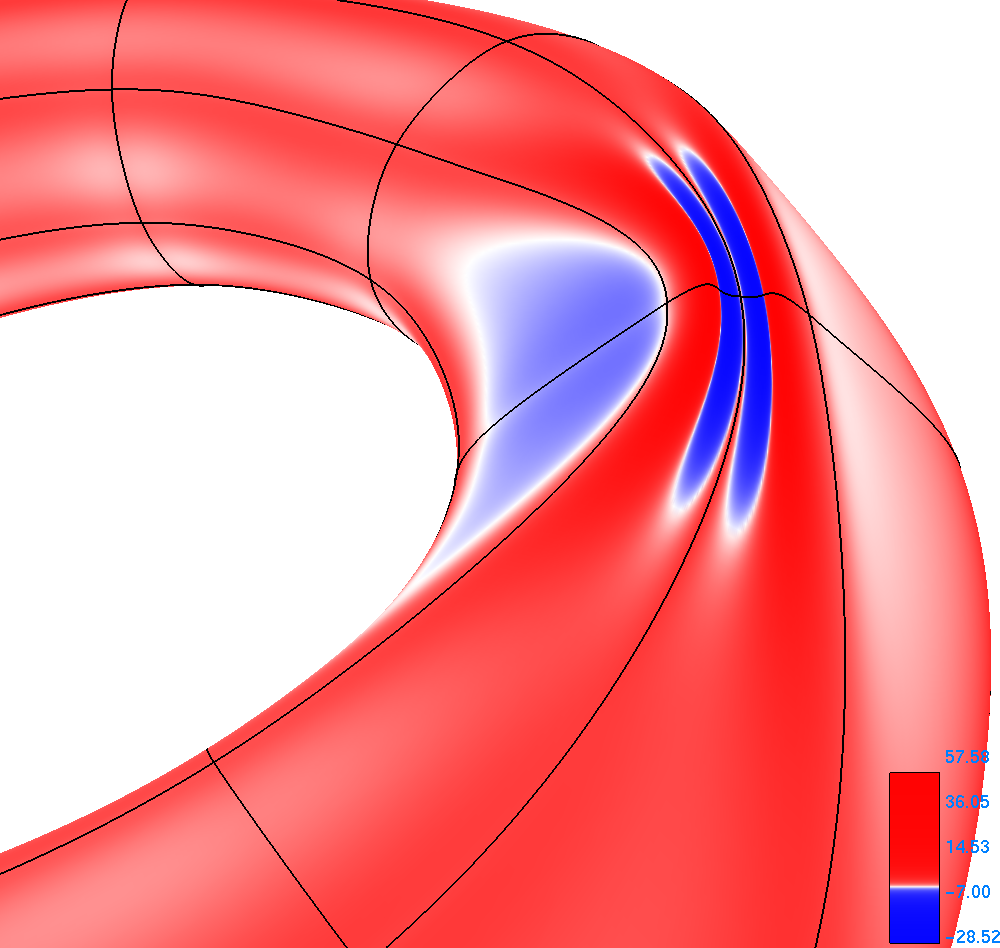}\label{fig:ex1_4}}
\\
\subfigure[Section polyline and corresponding section curves]
{\hspace{0.0cm}\includegraphics[width=0.95\textwidth/4]{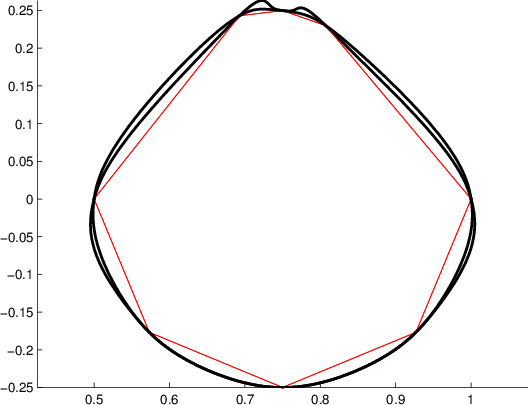}\label{fig:ex1_5}}
\hfill
\subfigure[Augmented surface]
{\hspace{0.0cm}\includegraphics[width=0.95\textwidth/4]{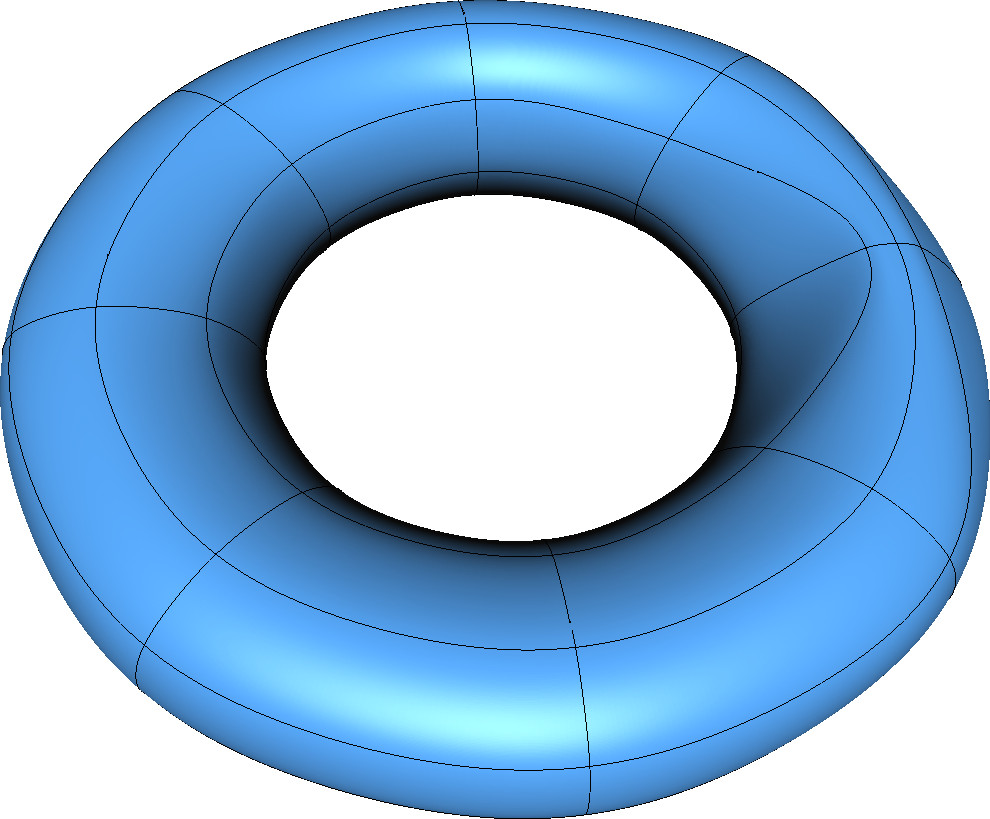}\label{fig:ex1_6}}
\hfill
\subfigure[Zoom of \ref{fig:ex1_6}]
{\includegraphics[width=0.8\textwidth/4]{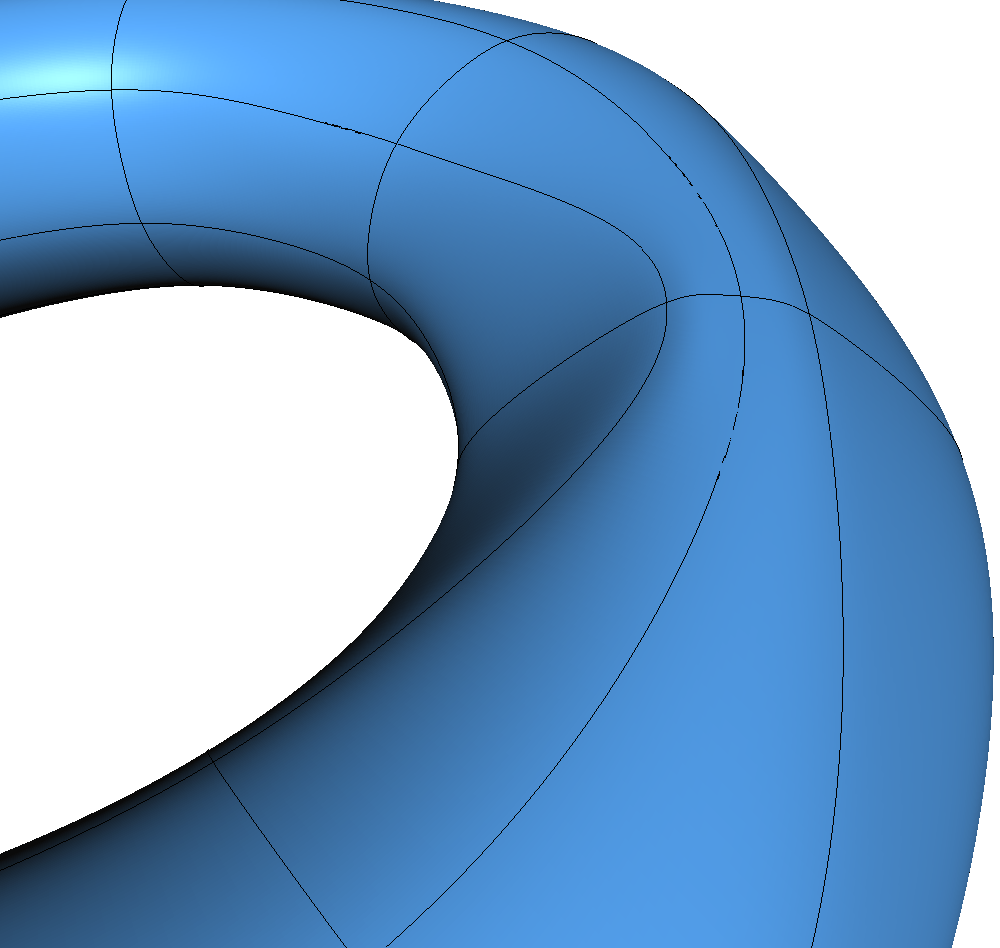}\label{fig:ex1_7}}
\hfill
\subfigure[Mean curvature of \ref{fig:ex1_6}]
{\includegraphics[width=0.8\textwidth/4]{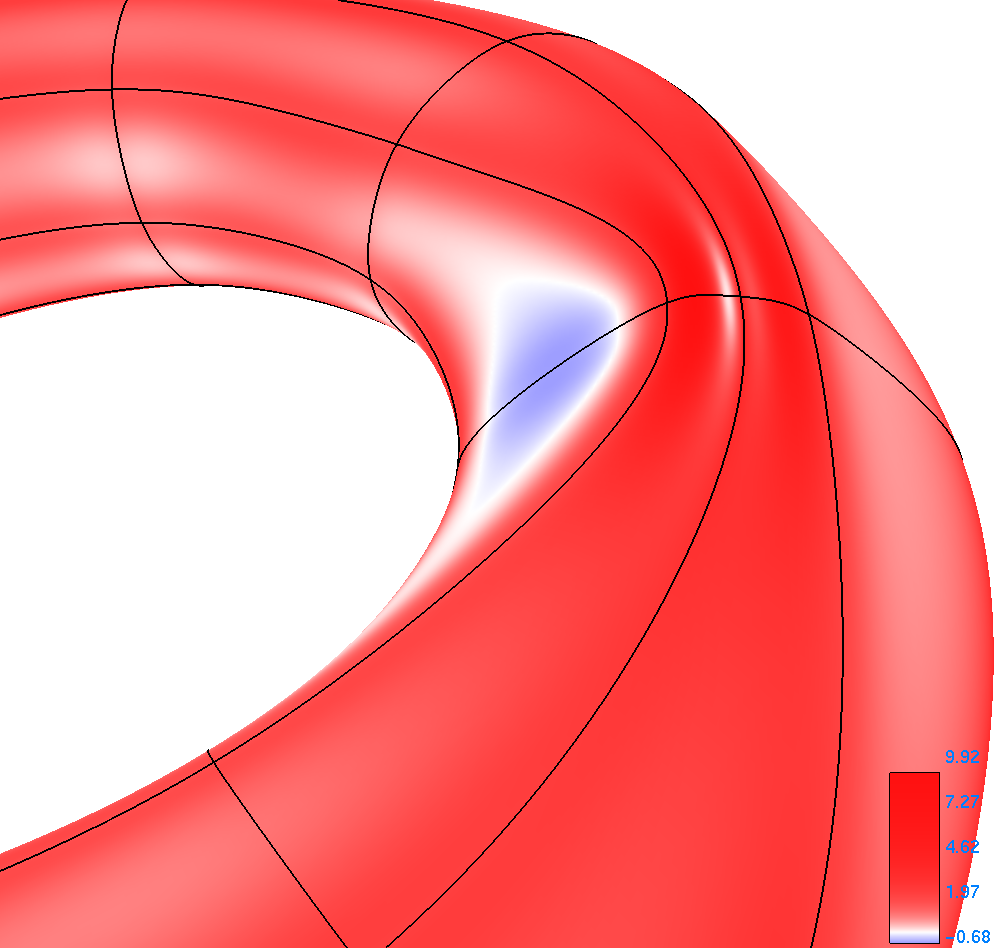}\label{fig:ex1_8}}
\caption{Comparison between tensor product and augmented surfaces built upon univariate splines of class $D^5C^2P^2S^4$. Figure \ref{fig:ex1_5} shows the interpolatory curves of the considered class generated from the red-colored section polyline in \ref{fig:ex1_1} with augmented and tensor product parameterizations.}\label{fig:ex1}
\end{figure}

\begin{figure}[t]
\centering
\subfigure[Initial mesh]
{\includegraphics[width=0.9\textwidth/4,trim=0.5cm 1cm 0.5cm 0cm, clip=true]{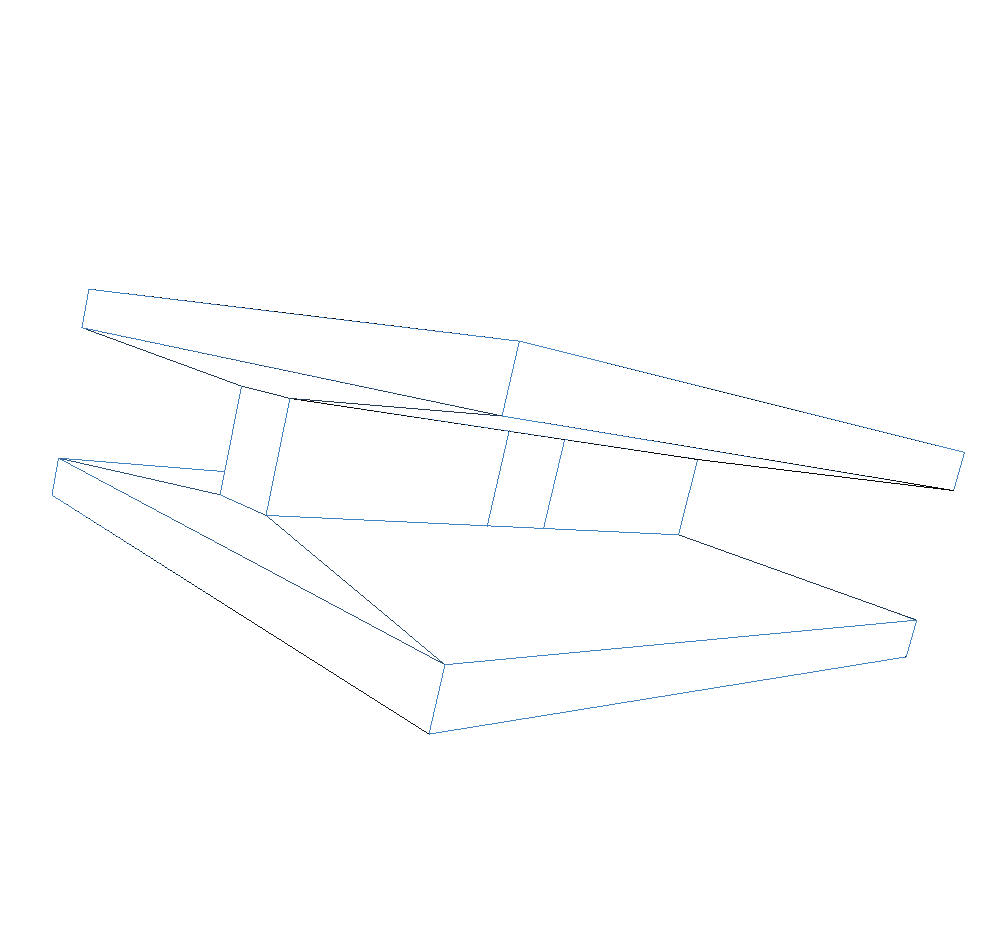}
\includegraphics[width=0.9\textwidth/4,trim=0.5cm 1cm 0.5cm 0cm, clip=true]{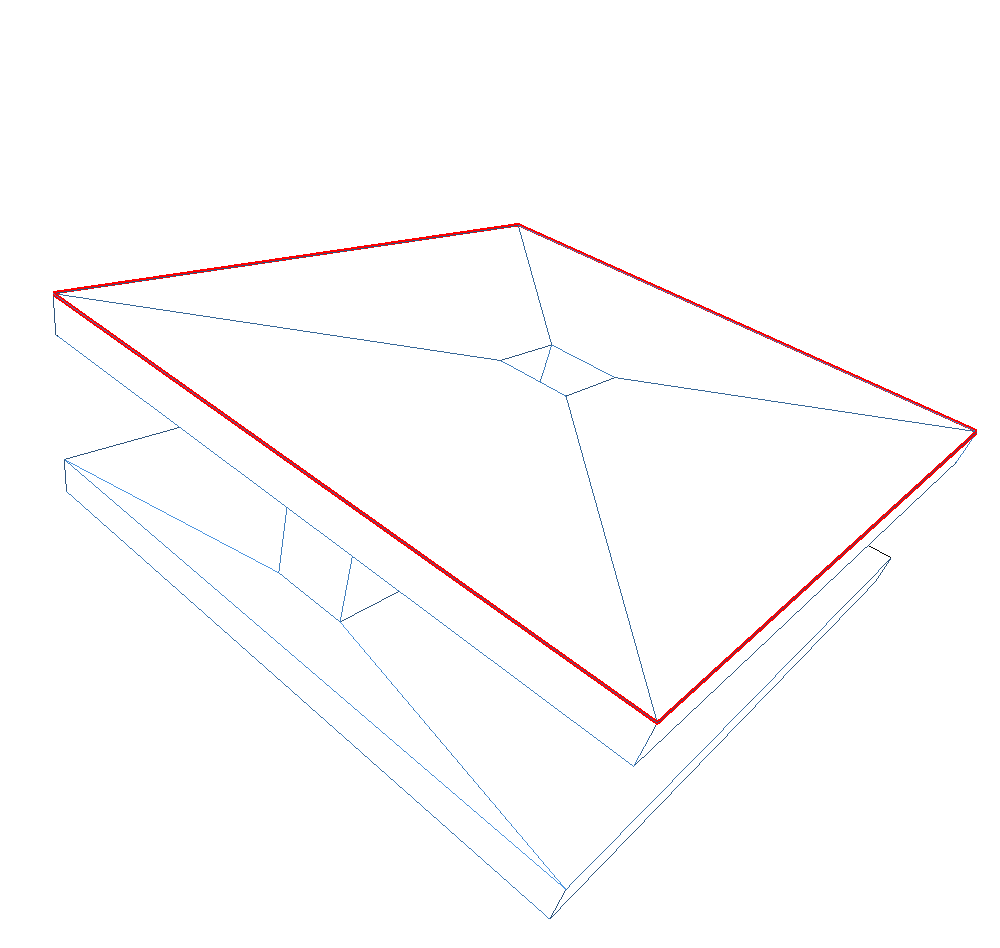}
\label{fig:ex2b_mesh}}
\hspace{0.5cm}
\subfigure[Section polyline and corresponding section curves]{
\includegraphics[width=0.9\textwidth/4]{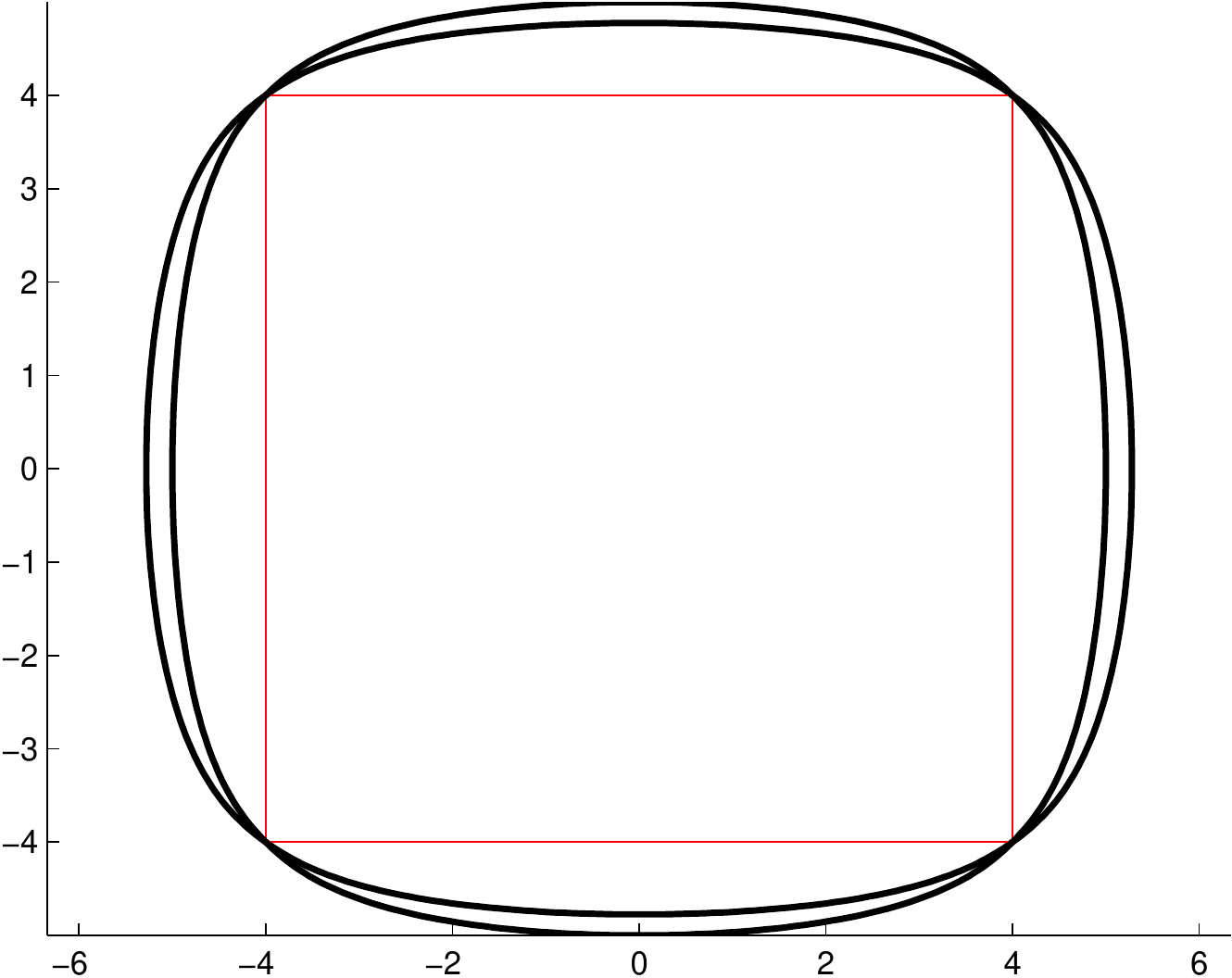}
\includegraphics[width=0.9\textwidth/4]{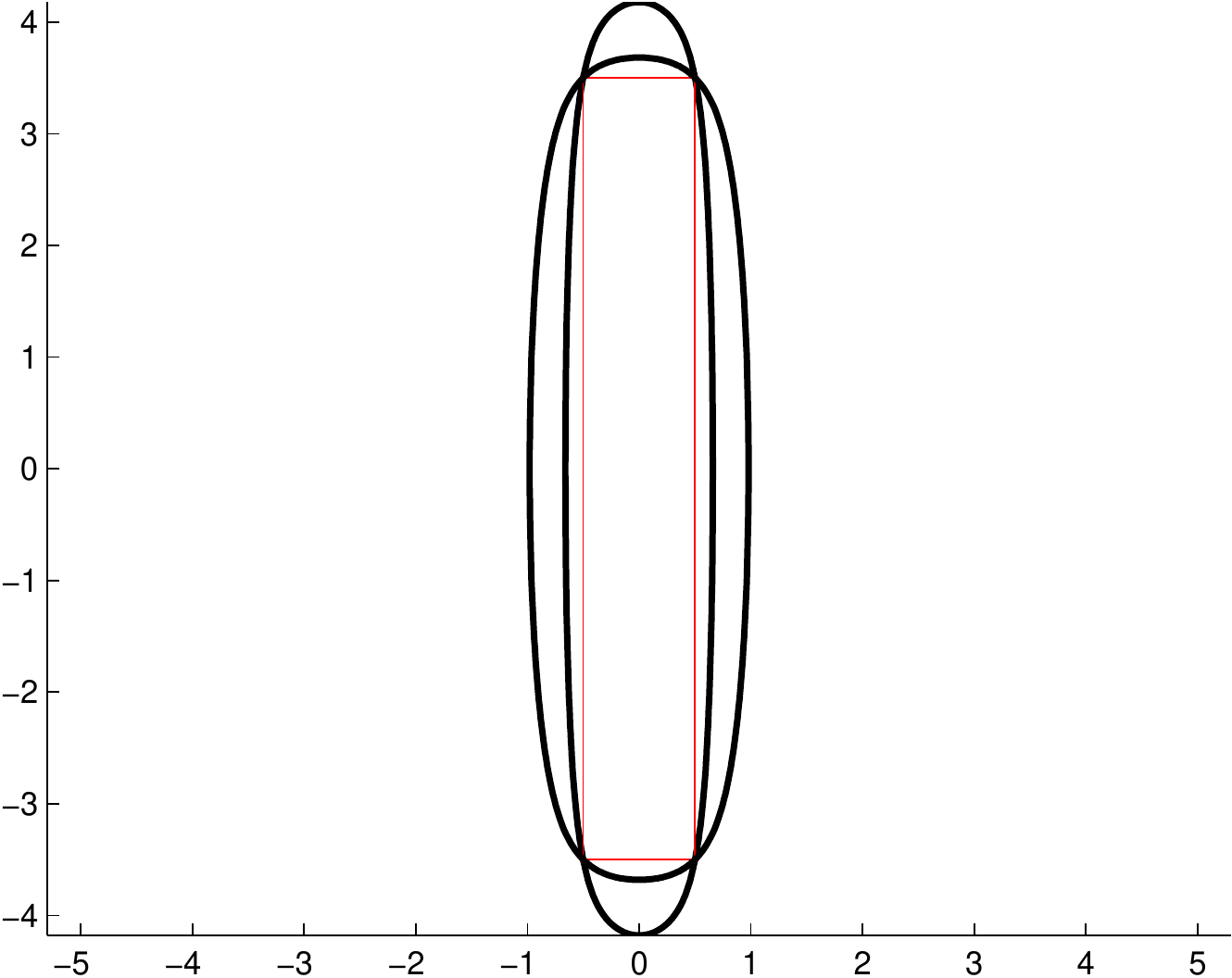}
\label{fig:ex2b_section}
}
\\
\subfigure[Augmented surface]{
\includegraphics[width=0.9\textwidth/4,trim=0.5cm 1cm 0.5cm 0cm, clip=true]{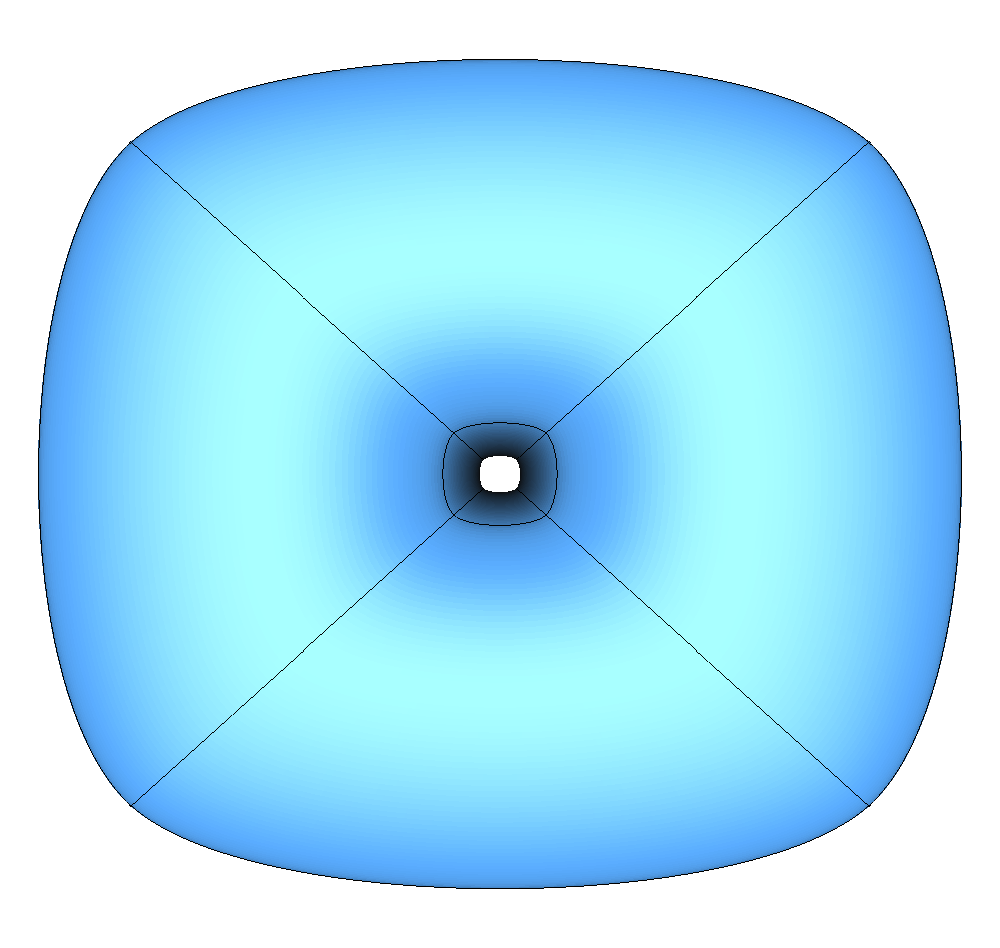}
\hfill
\includegraphics[width=0.9\textwidth/4,trim=0.5cm 0cm 0.5cm 0cm, clip=true]{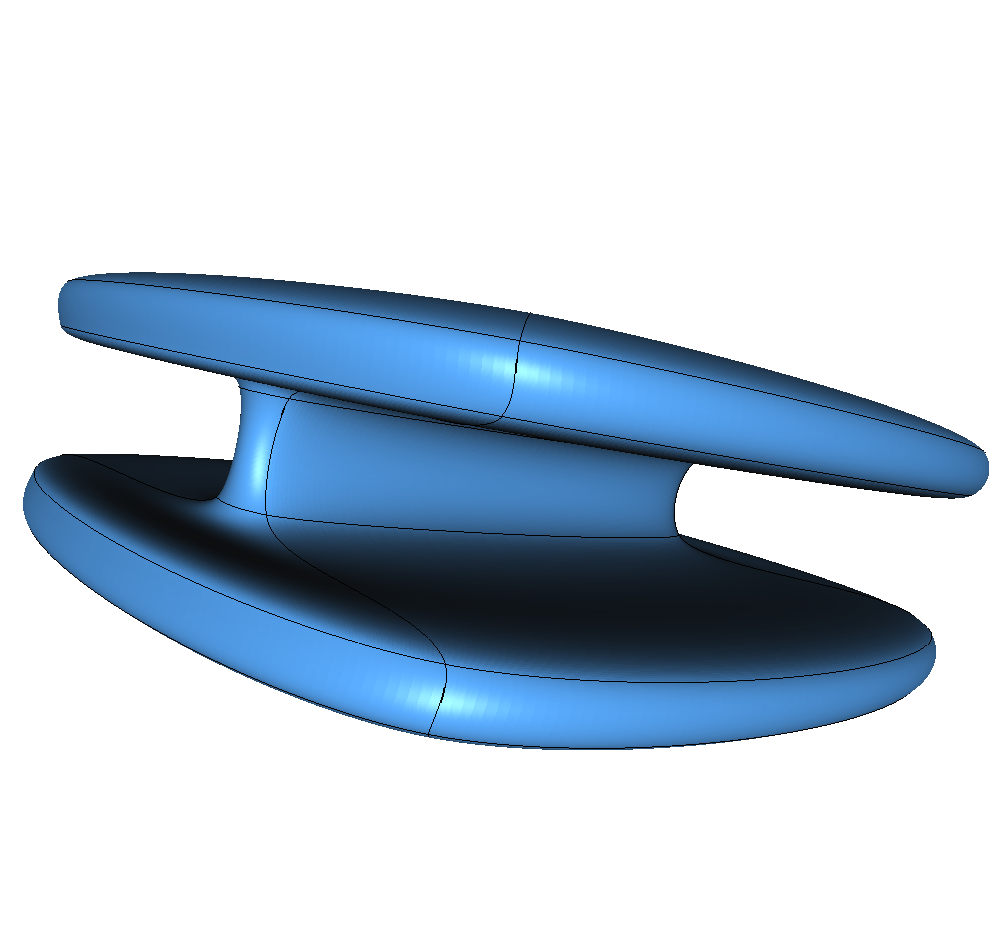}\label{fig:ex2b_1}
}
\hspace{0.5cm}
\subfigure[Tensor product surface]{
\includegraphics[width=0.9\textwidth/4,trim=0.5cm 0cm 0.5cm 0cm, clip=true]{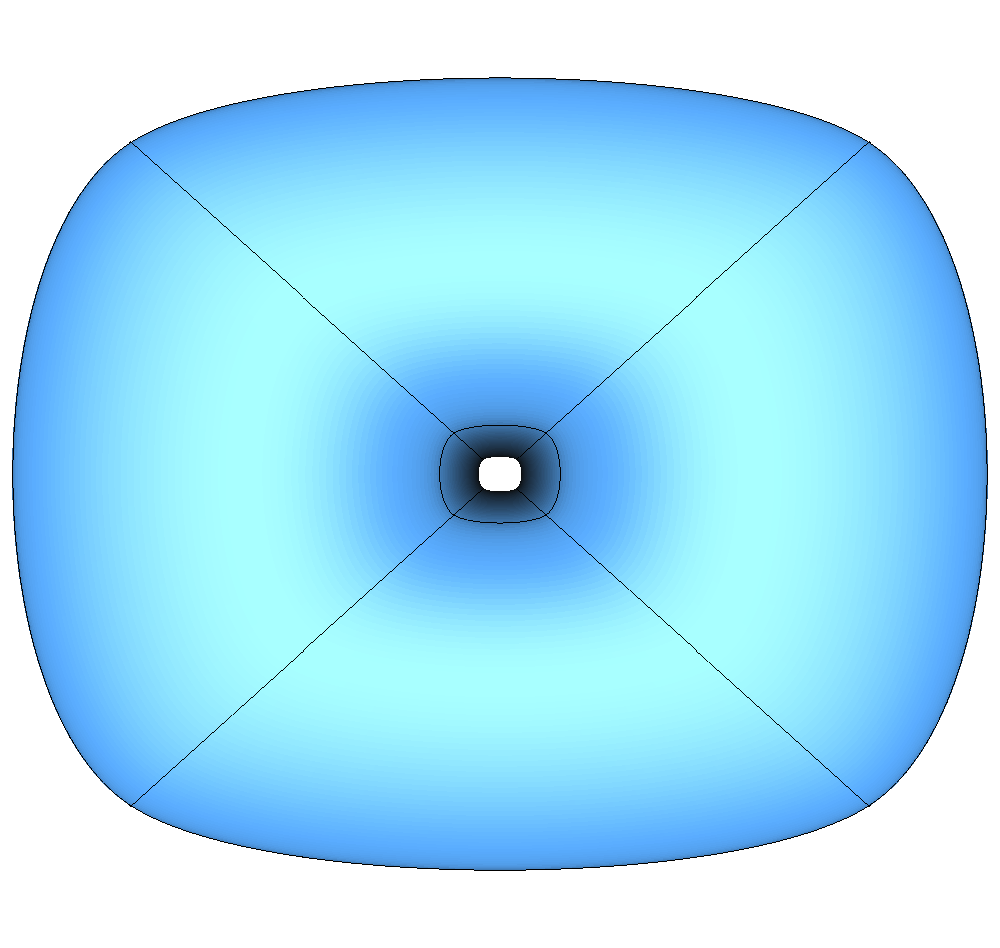}
\hfill
\includegraphics[width=0.9\textwidth/4,trim=0.0cm 0cm 0.5cm 0cm, clip=true]{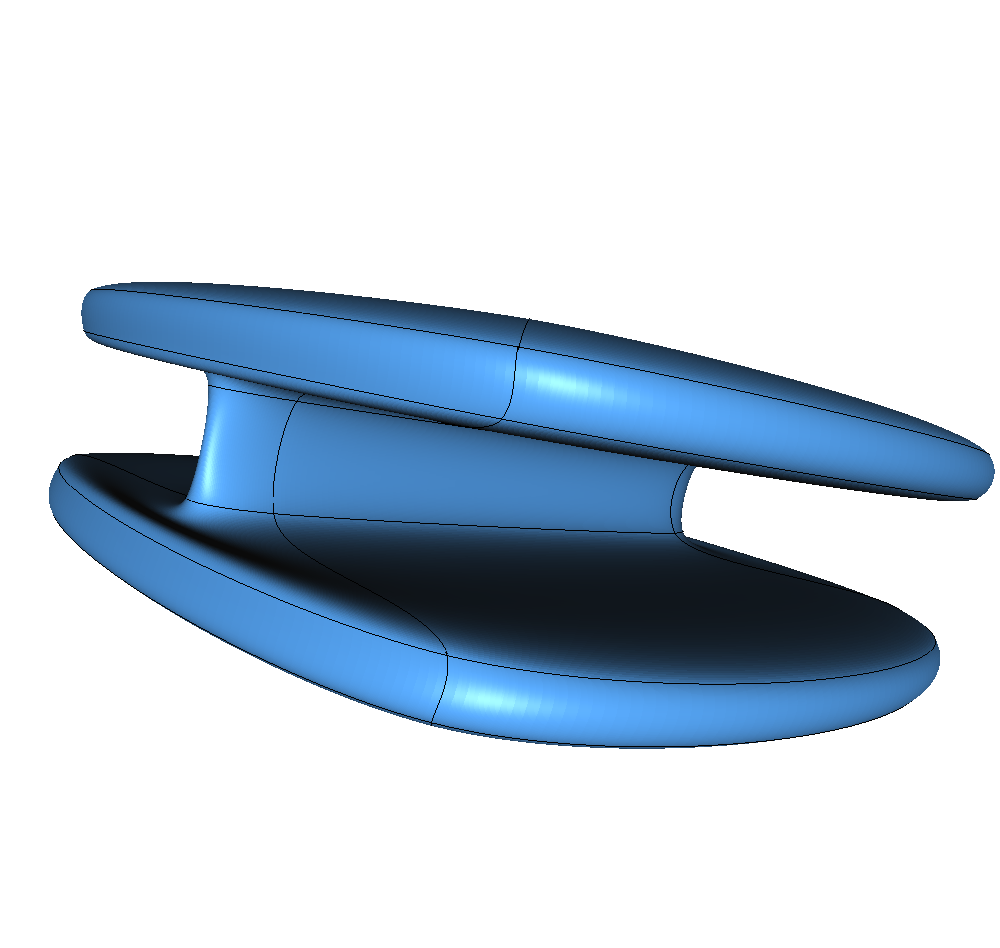}\label{fig:ex2b_2}}
\caption{Comparison between augmented and tensor product surface. \subref{fig:ex2b_mesh} Input mesh; \subref{fig:ex2b_section} The section polyline highlighted in \subref{fig:ex2b_mesh} and section curves of class $D^5C^2P^2S^4$, with the augmented (centripetal) and mean parametrizations. \subref{fig:ex2b_1}-\subref{fig:ex2b_2} Corresponding augmented and tensor product surfaces.}\label{fig:ex2b}
\end{figure}

\begin{figure}[t]
\centering
\renewcommand*{\thesubfigure}{}
\subfigure[Initial mesh]
{\includegraphics[width=0.9\textwidth/4, trim=0cm 2cm 0cm 2cm, clip=true]{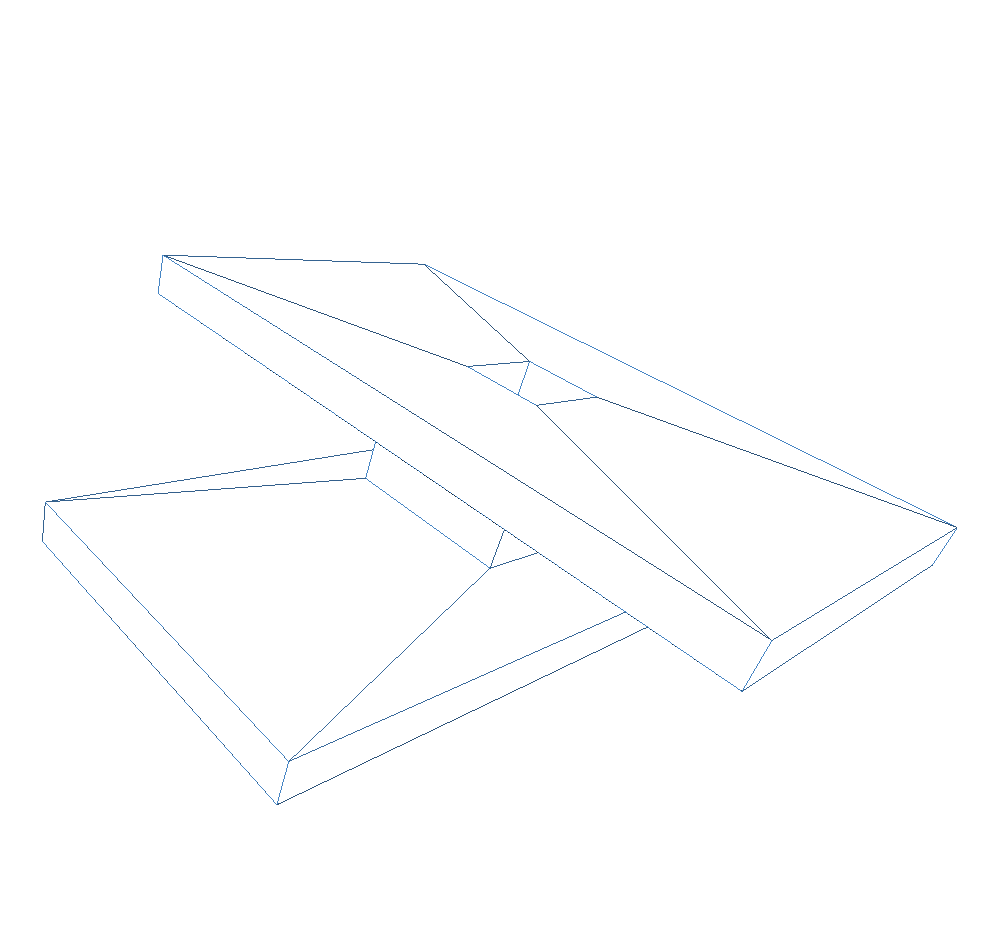}
\hspace{0.5cm}
\includegraphics[width=0.9\textwidth/4,trim=0cm 2cm 0cm 2cm, clip=true]{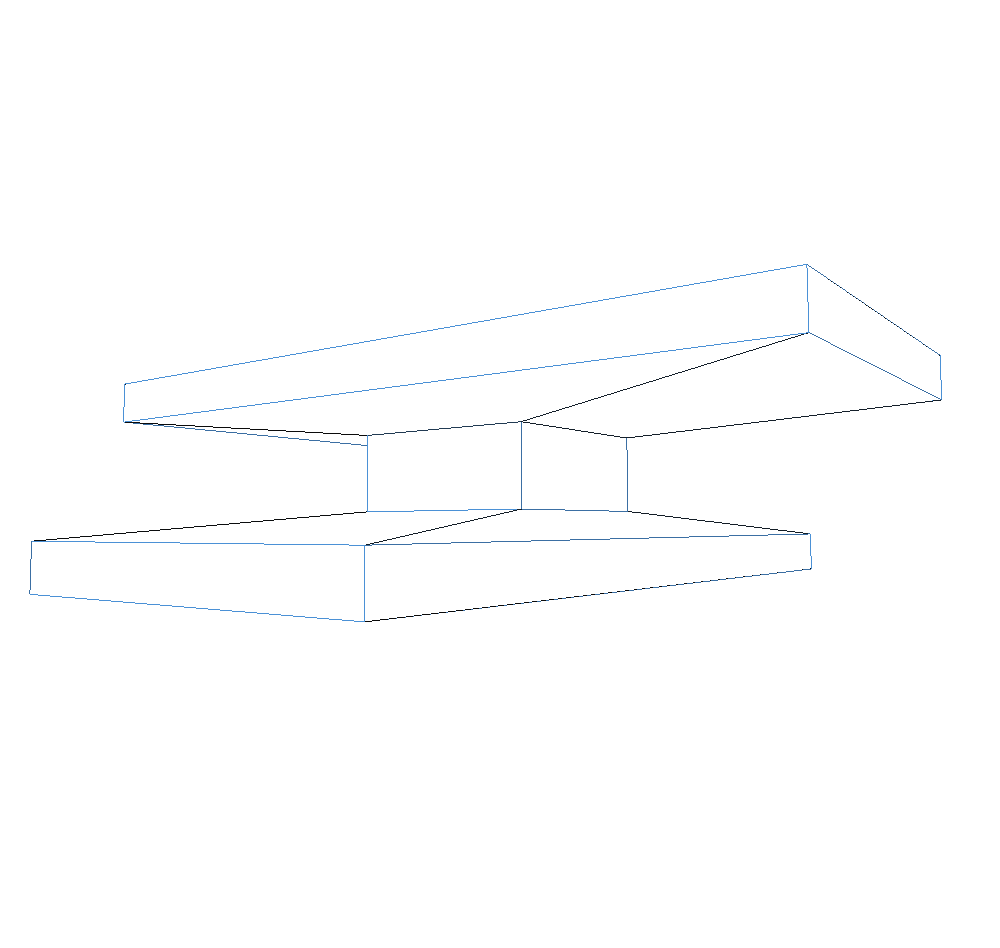}\label{fig:ex2_mesh}}
\\
\subfigure[$D^3C^1P^2S^4$]
{\includegraphics[width=0.9\textwidth/4,trim=0cm 2cm 0cm 2cm, clip=true]{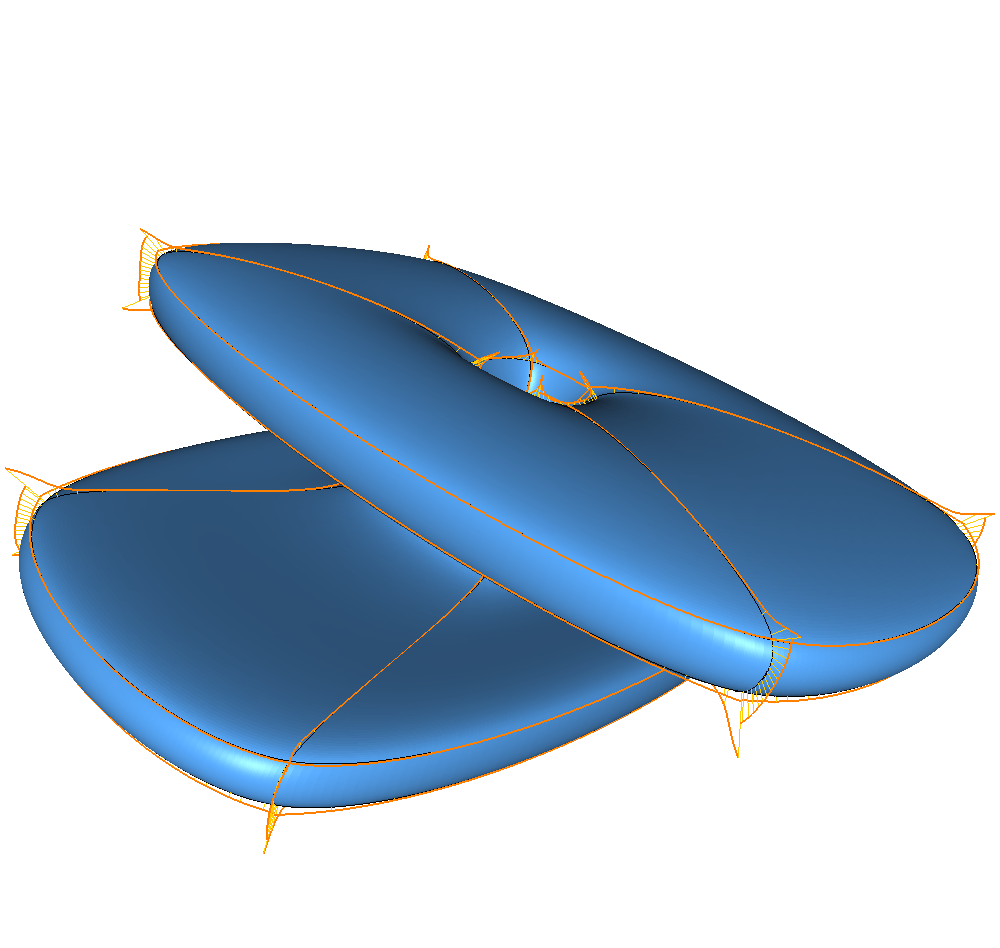}\label{fig:ex2_1}}
\hfill
\subfigure[$D^5C^2P^2S^4$]
{\includegraphics[width=0.9\textwidth/4,trim=0cm 2cm 0cm 2cm, clip=true]{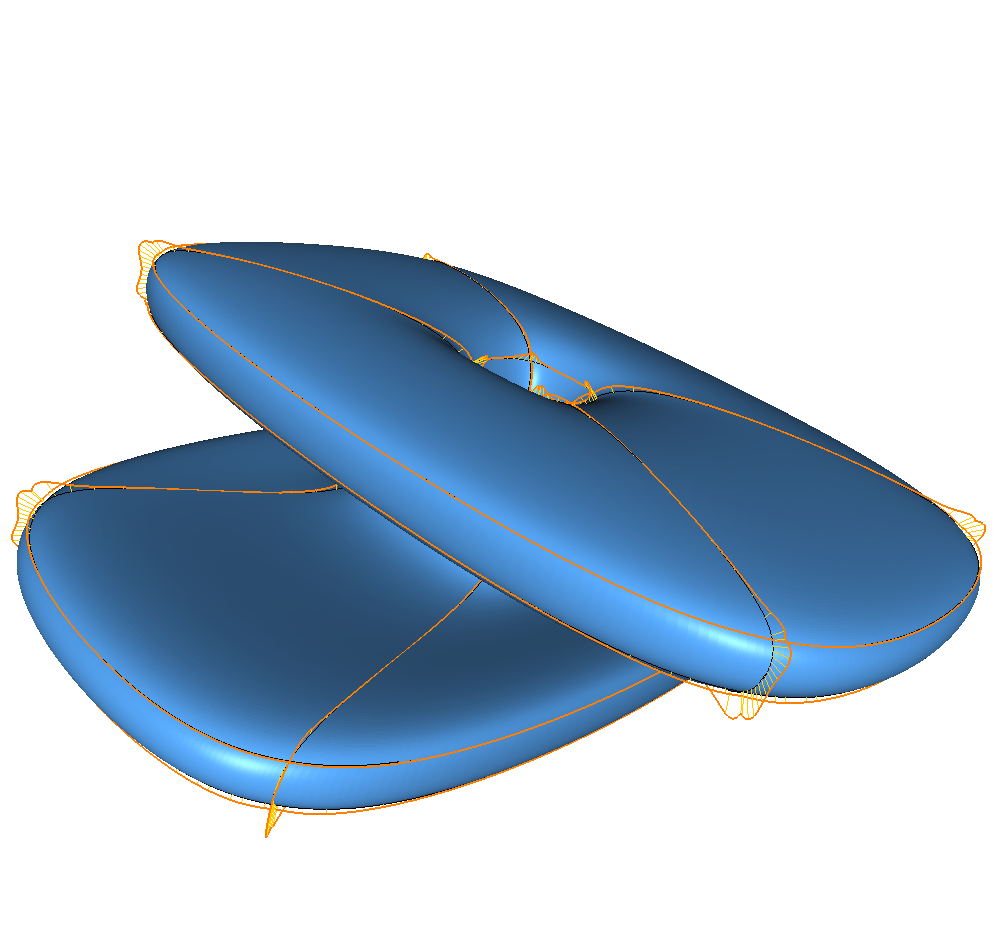}\label{fig:ex2_2}}
\hfill
\subfigure[$D^4C^2P^3S^6$]
{\includegraphics[width=0.9\textwidth/4,trim=0cm 2cm 0cm 2cm, clip=true]{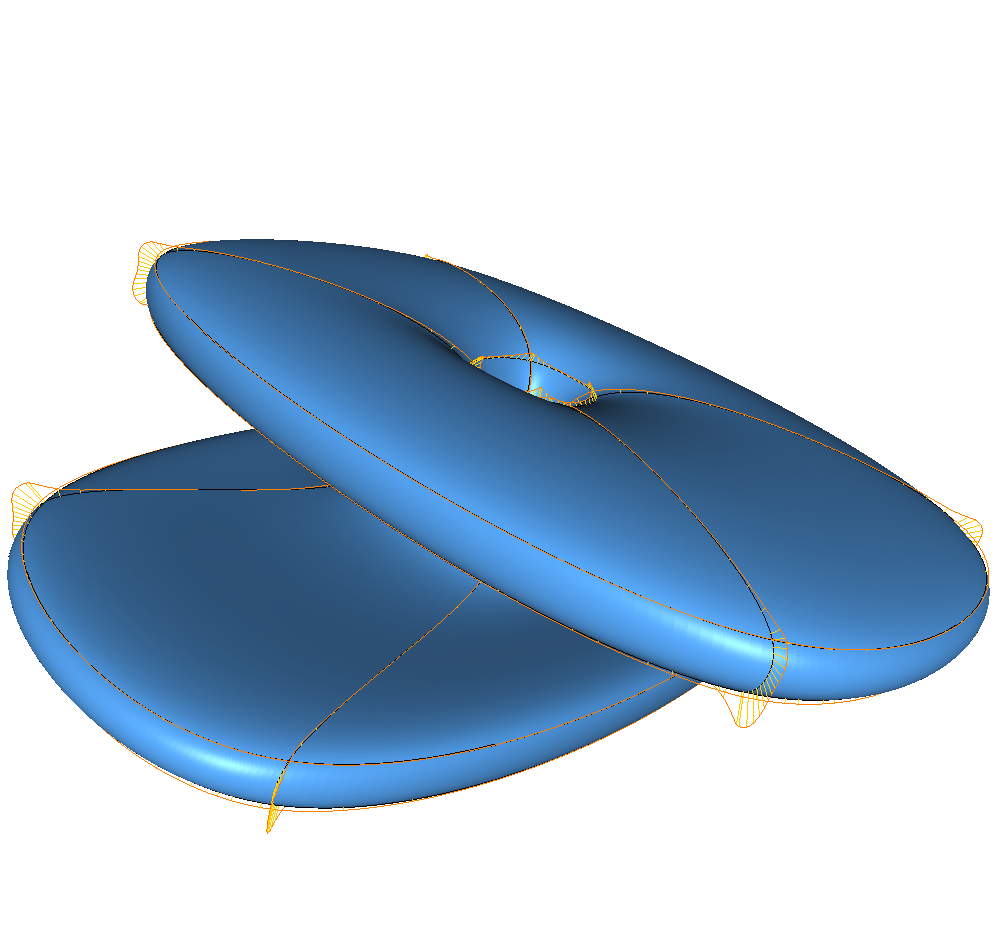}\label{fig:ex2_3}}
\hfill
\subfigure[$D^6C^3P^3S^6$]
{\raisebox{-0mm}{\includegraphics[width=0.9\textwidth/4,trim=0cm 2cm 0cm 2cm, clip=true]{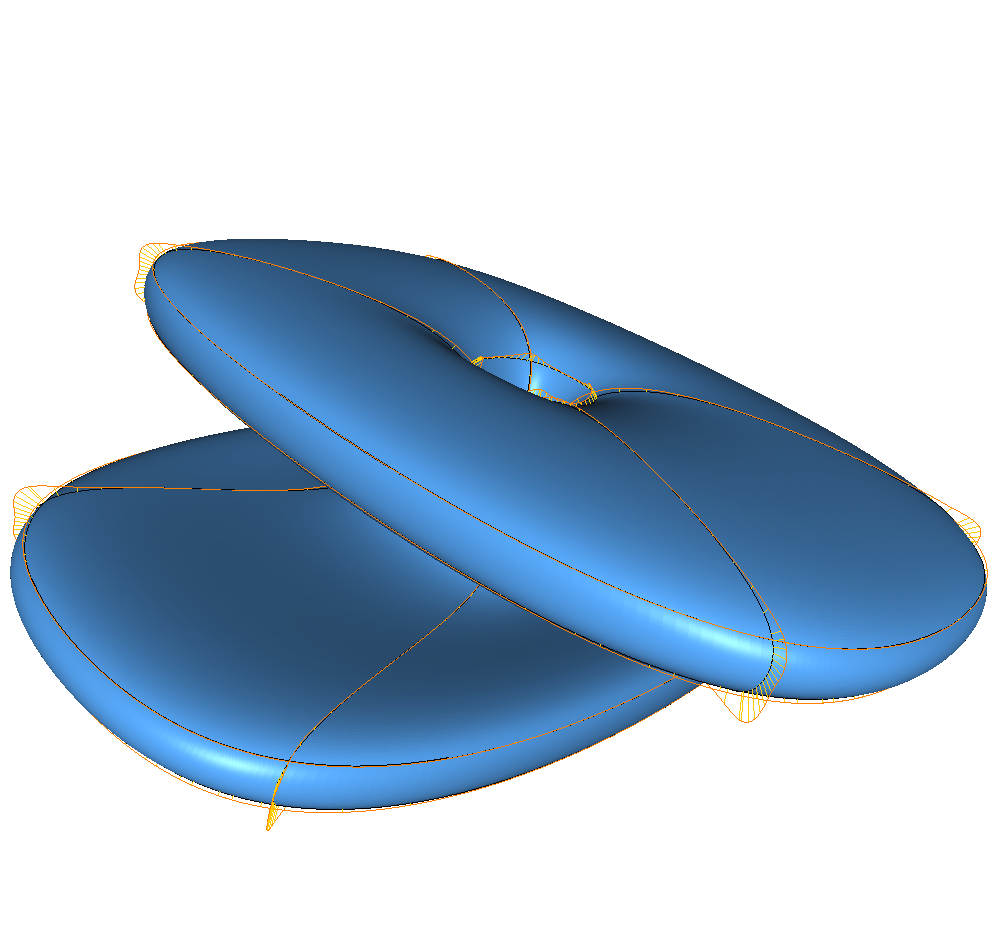}\label{fig:ex2_4}}}
\\
\subfigure[$D^3C^1P^2S^4$]
{\includegraphics[width=0.9\textwidth/4,trim=0cm 0cm 0cm 2cm, clip=true]{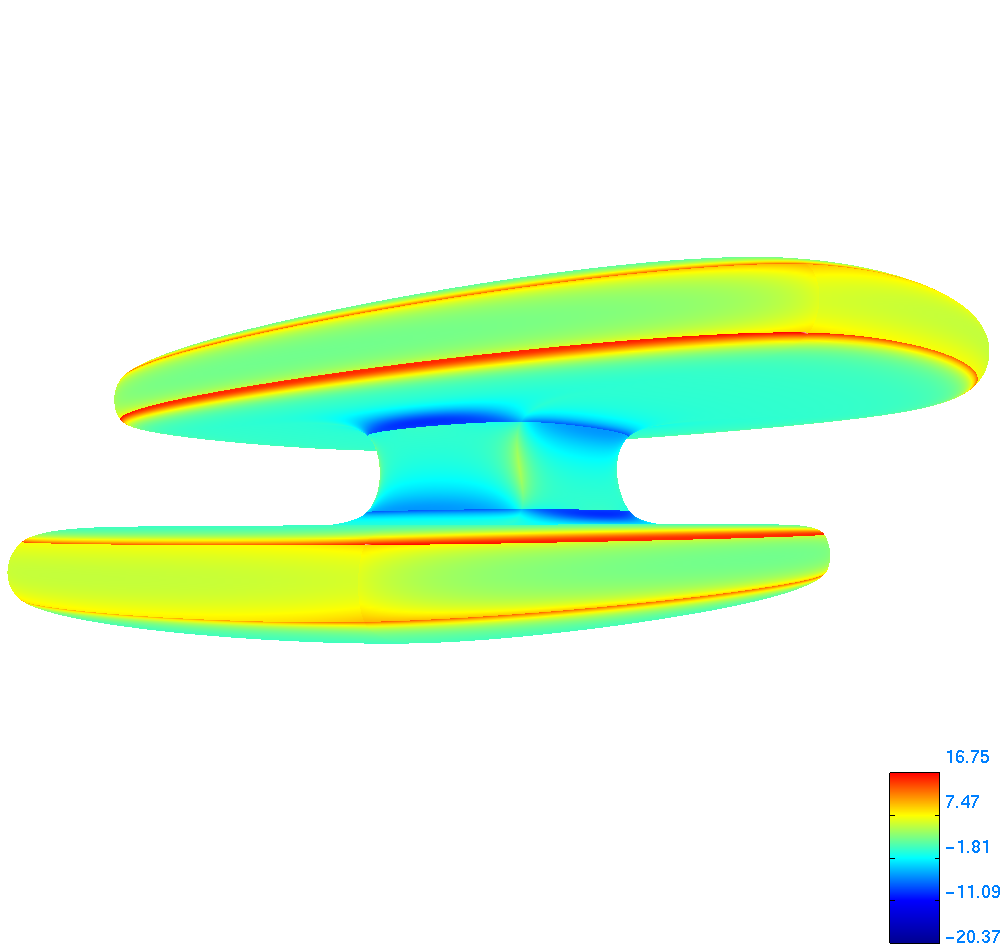}\label{fig:ex2_9}}
\hfill
\subfigure[$D^5C^2P^2S^4$]
{\includegraphics[width=0.9\textwidth/4,trim=0cm 0cm 0cm 2cm, clip=true]{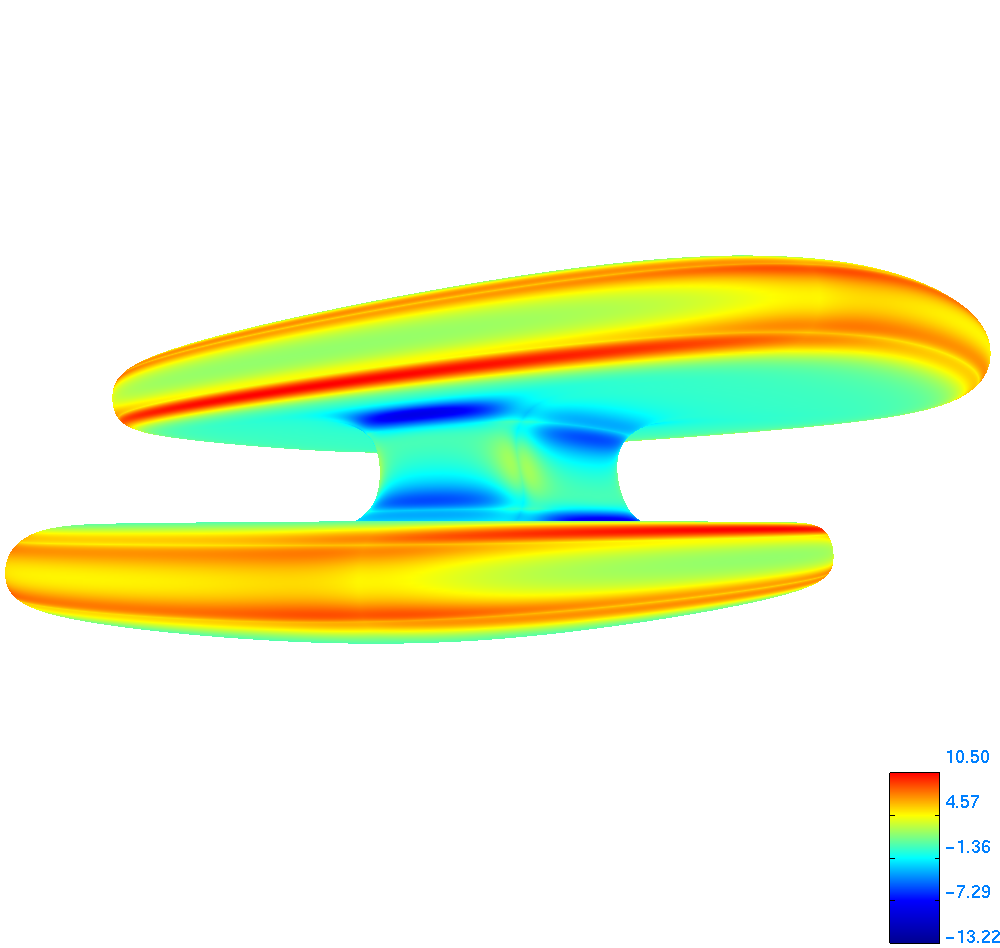}\label{fig:ex2_10}}
\hfill
\subfigure[$D^4C^2P^3S^6$]
{\includegraphics[width=0.9\textwidth/4,trim=0cm 0cm 0cm 2cm, clip=true]{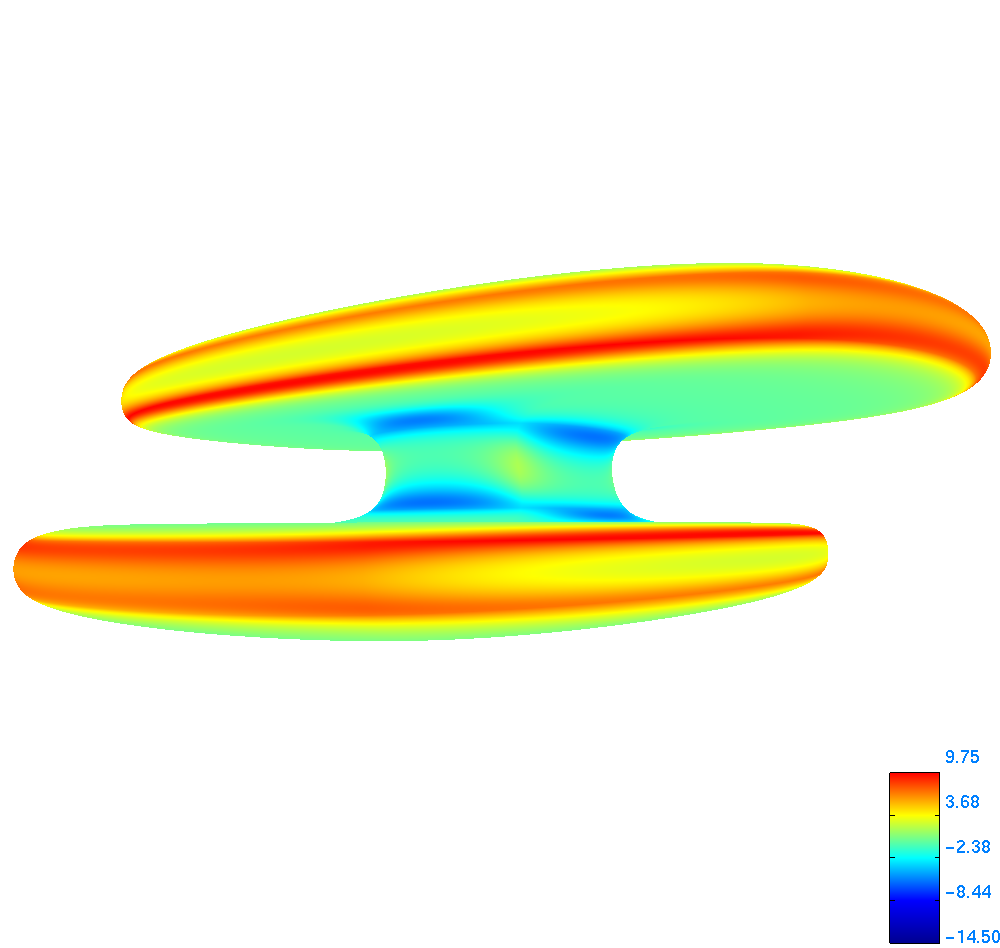}\label{fig:ex2_11}}
\hfill
\subfigure[$D^6C^3P^3S^6$]
{\includegraphics[width=0.9\textwidth/4,trim=0cm 0cm 0cm 2cm, clip=true]{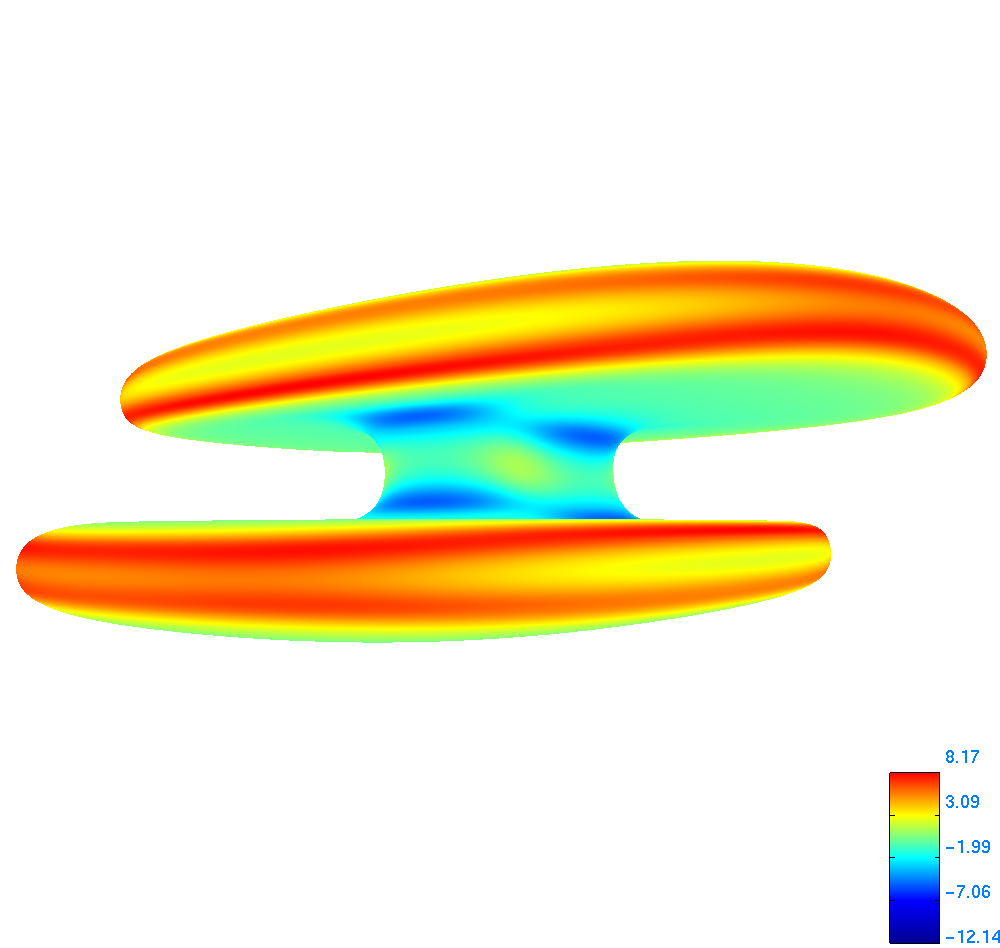}\label{fig:ex2_12}}\\
\caption{Local interpolatory surfaces with augmented parametrization built upon different classes of univariate splines $D^gC^kP^mS^w$ ($g$, $k$, $m$ and $w$ are the degree, order of continuity, maximum degree of polynomials that are reproduced and support width).  1st row: Input mesh depicted from two different points of view; 2nd row: Surfaces and curvature combs of section curves superimposed; 3rd row: Mean curvature.}\label{fig:ex2}
\end{figure}

This section covers some examples of augmented surfaces, which will allow us to highlight several important features of the proposed method. In particular:
\begin{itemize}[leftmargin=*]
\item Augmented surfaces have significant better quality with respect to tensor product ones.
 In fact, when the data points are unevenly distributed, tensor product surfaces show unwanted interpolation artifacts. According to our experiments, these artifacts are not present in the augmented counterpart.
\item For each class $D^gC^kP^mS^w$ of local, univariate, non-uniform splines in \cite{BCR13a,ABC13}, we get an augmented surface with the same smoothness and support width, and where the section curves are univariate splines in the given class. Thus we have a large family of surfaces with different properties, each of which, thanks to the augmented parametrization, is a good interpolant to the input mesh.
\item Even in the case where a tensor product surface has no artifacts, its augmented counterpart can better approximate the input mesh.
    \end{itemize}

\noindent
In the following we discuss the above points in more detail, also with the help of graphical illustrations.

Before delving into details, we recall that the method for computation of the edge parameter intervals greatly impacts the surface shape and its quality.
Therefore, to allow a fair comparison, all our examples are based on the same technique. Comparing different approaches of curve parametrization is not the scope of this paper. The reader may refer to \cite{Fang2013} for a recent study on the topic.

For the augmented surfaces, the edge parameter intervals are computed through the centripetal parametrization as illustrated in the previous section.
Moreover, tensor product surfaces are parameterized by averaging the centripetal parameter interval values, in such a way that each isocurve can have the same (non-uniform) parameter sequence. Note that, in this case, a different parameter sequence is created for each domain direction. For brevity, we call this approach a \emph{mean parametrization}.

Figure \ref{fig:ex1} shows two surfaces obtained with the augmented and mean parameterizations, which interpolate
a \qtext{modified} torus mesh. The uneven length of the mesh edges makes it a challenging task to find an interpolating surface of good quality.
In fact, the tensor product surface shows noticeable ripples both in the shaded display and in the mean curvature graph.
In contrast, these artifacts are not encountered in the augmented surface, which exhibits a \qtext{fair} behavior and a more pleasant curvature graph.

Focusing on the red-colored section polyline in Figure \ref{fig:ex1_1} will help us in understanding the different behavior of the two surfaces. Figure \ref{fig:ex1_5} depicts the corresponding section curves obtained when the parameter intervals are generated by the augmented and mean parametrizations.
The averaging step required by the mean parametrization entails that the parameter intervals assigned to the edges of the considered polyline are very different than the initial centripetal parametrization and have little relationship with the geometry of the interpolation data. This is responsible for the undesired ripples.
Oppositely, in the augmented surface the section polyline vertices are interpolated
at the centripetal parameters and the resulting section curve is free of shape artifacts.

Figure \ref{fig:ex3} (for now restricted to cases \ref{fig:ex3_1} and \ref{fig:ex3_2}) is another example where the mean parametrization fails compared to the augmented one. The tensor product surface with mean parametrization self intersects, whereas no artifact is present in the augmented surface.
This example also illustrates the ease with which local interpolation allows for managing open surfaces.
In particular, the mesh was extrapolated in a linear way over the boundary to get an additional layer of faces and vertices. With these additional data, the boundary patches were straightforwardly computed by formula \eqref{eq:S}.

The example in Figure \ref{fig:ex2b} emphasizes that an augmented surface may be preferable compared
to a tensor product surface even when the latter does not present unwanted artifacts.
The red-colored section polyline in Figure \ref{fig:ex2b_mesh} is a regular square. Such symmetry in the data is reflected in the corresponding section curve of the augmented surface (Figure \ref{fig:ex2b_section}).
Conversely, this is not the case with the tensor product surface, as a side effect of the underlying mean parametrization.
Therefore, even if both surfaces are $G^2$-continuous and free of interpolation artifacts, the augmented one represents more faithfully the input data.

Figure \ref{fig:ex2} shows local interpolatory surfaces with augmented parametrization based upon various classes $D^gC^kP^mS^w$.
Regardless of the different properties of the underlying fundamental functions, all the displayed surfaces are fair and closely resemble the shape of the input mesh.
The difference between one surface and another is made apparent by the curvature comb of the section curves and by the mean curvature graph.
In view of Proposition \ref{prop:Ck}, the displayed surfaces are $G^k$ continuous with $k=1,2,3$, according to the continuity of the class \class.

\afterpage{\clearpage}

\section{Local interpolation of meshes with extraordinary vertices by augmented surface patches}
\label{sec:augmented_extraordinary}
In this section we discuss how to generate local interpolatory surfaces of high quality when the input mesh contains extraordinary vertices.

To this aim, we observe that a mesh containing extraordinary vertices can be partitioned into \emph{regular} and \emph{extraordinary regions}.
The regular regions are formed by all the mesh faces where the local method described in Section \ref{sec:regular_case} can be applied, whereas the remaining faces form the extraordinary regions. The generation of an augmented surface patch of the form \eqref{eq:S} requires that we can uniquely determine a surrounding rectangular grid of mesh vertices, whose size depends on the support of the patch.
In particular, we need a $w\times w$ vertex grid, when the underlying fundamental functions belong to class \class.
We say that a patch is \emph{regular} when it can be generated by formula \eqref{eq:S}, whereas it is \emph{extraordinary} otherwise.
Figure \ref{fig:network_holes} illustrates the regular and extraordinary regions of a sample mesh for $w=4,6$. As shown in the figure, there can also be more extraordinary vertices close by. In this case, if the support width is greater than 4, an extraordinary patch does not necessarily contain an extraordinary vertex.

As a result of the construction of the regular augmented patches wherever possible, we obtain a surface with \qtext{holes} surrounding the extraordinary vertices. For example, in the cases of support width 4 or 6, around each isolated extraordinary vertex we will have a hole corresponding to one ring or two rings of mesh faces respectively. Moreover, the boundaries of the augmented regular patches will form a network of open curves of class \class.

Therefore, the problem to be addressed is how to patch the extraordinary portions of the mesh with sufficient smoothness and how to properly manage the join between regular and extraordinary patches, taking into account the underlying augmented parametrization.

At this point, it is easy to understand that the entire method is not intended for meshes mainly composed of extraordinary vertices. In fact, the regular portion of the composite surface serves as a \qtext{guide} for the generation of the extraordinary patches that must join to it.

\begin{figure}[t]
\centering
\vspace{0.2cm}
\subfigure[w=4]{\includegraphics[width=0.35\textwidth,trim=0.5cm 2.0cm 0.5cm 1.0cm,clip=true]{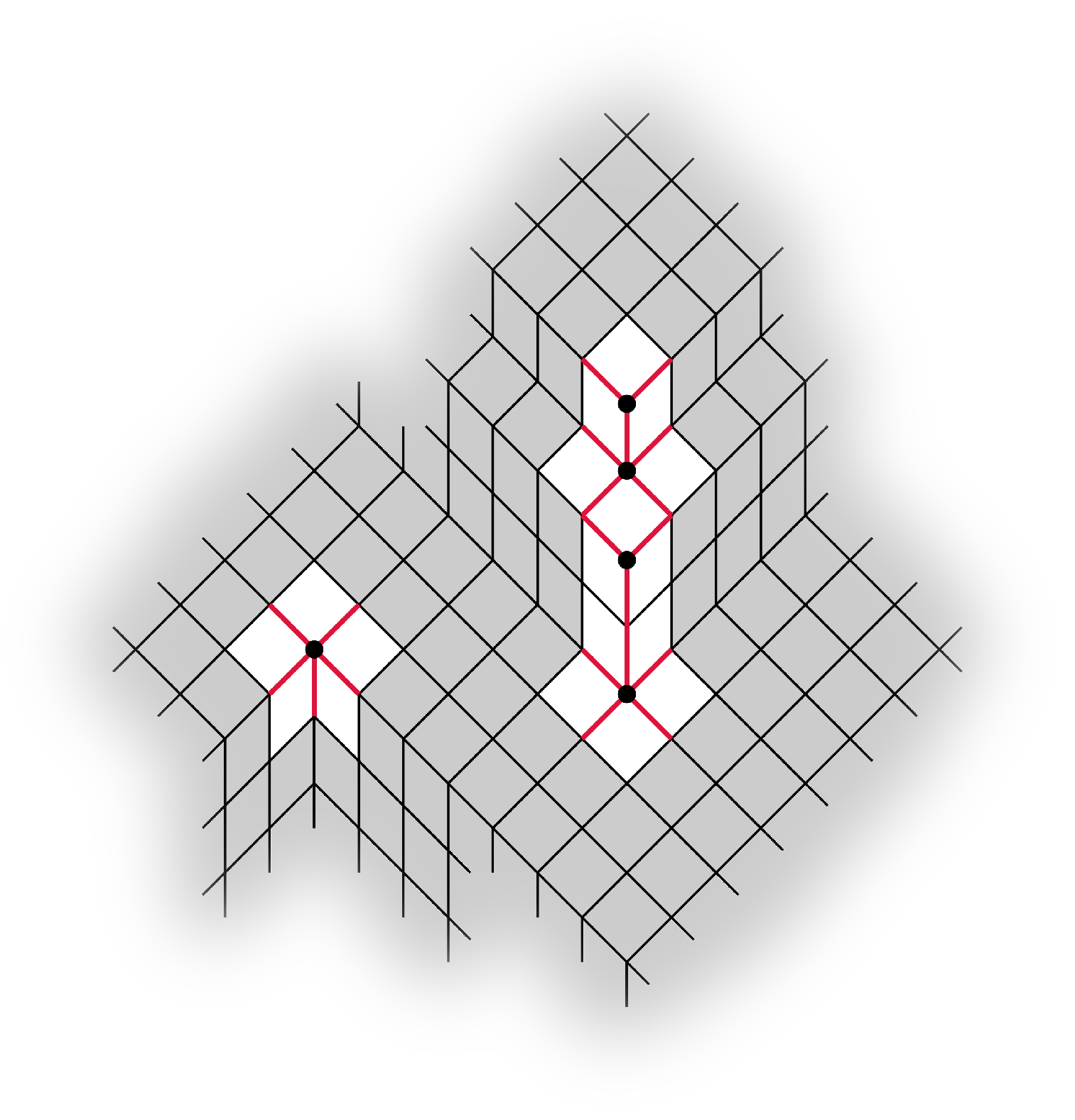}\label{fig:network_holes4}}
\hspace{1cm}
\subfigure[w=6]{\includegraphics[width=0.35\textwidth,trim=0.5cm 2.0cm 0.5cm 1.0cm,clip=true]{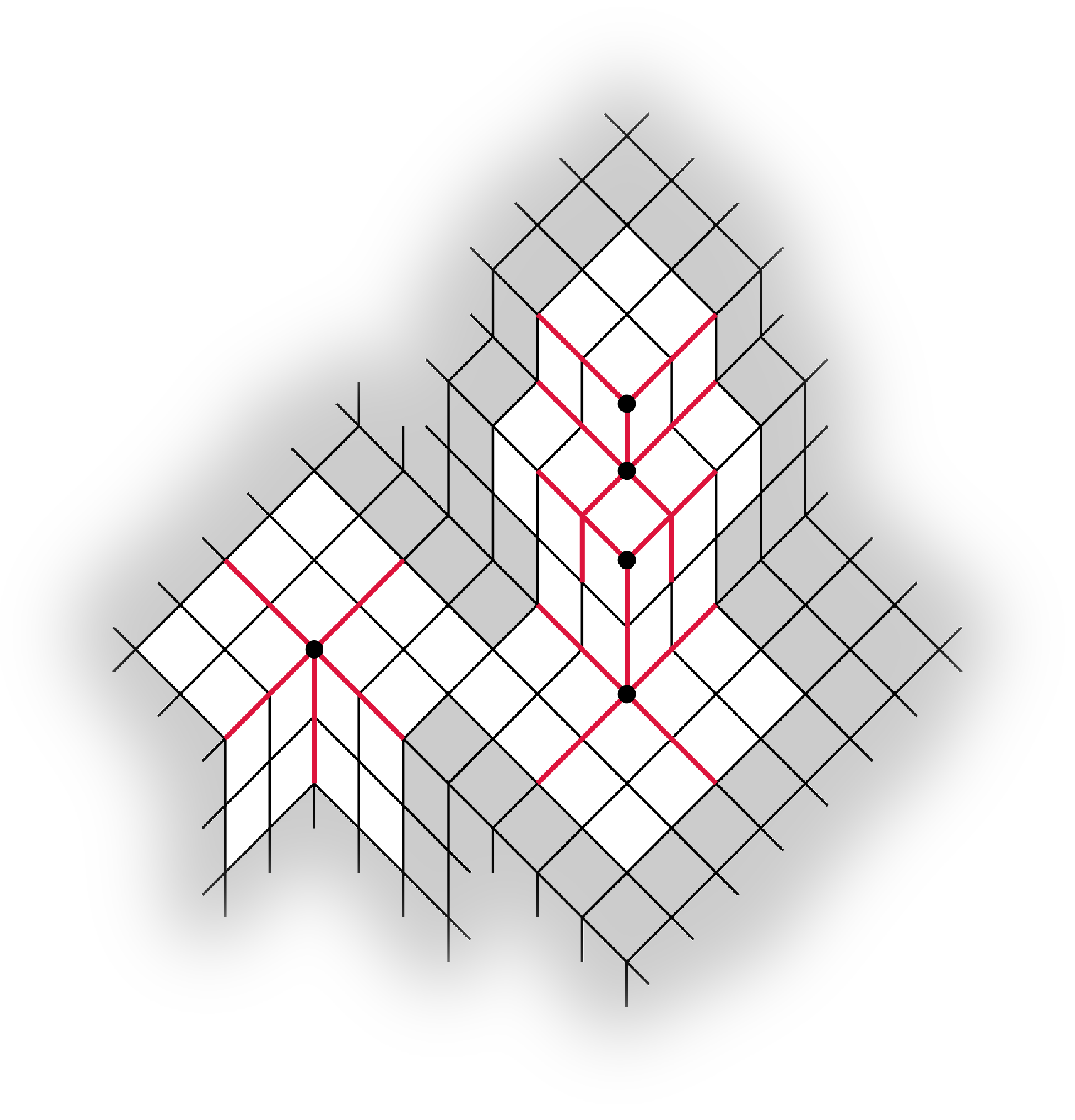}\label{fig:network_holes6}}
\caption{
Mesh containing several extraordinary vertices. The regular regions are highlighted in gray for support width $w=4$ and $w=6$.}
\label{fig:network_holes}
\end{figure}

Our construction for the extraordinary patches is based on the well-known Coons-Gregory scheme, which we will tweak in order to handle an augmented parametrization.
To explain the basic idea, we start by considering the $G^1$ Coons-Gregory patches \cite{Gregory1974} (see also the classical text books \cite{Hoschek93,Farin2002a}), which can be more familiar to the majority of readers.
Successively, we also address the $G^2$ form of these patches, initially proposed in \cite{MW91} and later developed in \cite{Hermann1996}.
As a result, depending on the method used, an extraordinary patch will join with $G^1$ or $G^2$ continuity to the neighboring patches, that can be either regular or extraordinary.
At the same time, as we have seen in the previous section, the regular portions of the surface will have arbitrary smoothness $G^k$ away from the boundary between the regular and extraordinary regions.

The input data for a Coons-Gregory patch are 4 boundary curves and the related cross-boundary derivative fields (in short cross-derivatives) of the first and, possibly, second order.
Each of the 4 boundaries may join either an extraordinary patch or a regular one.
In the former situation, the boundary curve and cross-derivatives need to be prescribed in an appropriate way and how to do so is an independent issue. Therefore, from now we assume that this information is given. We will return on the problem of determining the missing boundary information in Section \ref{sec:network}, where we discuss how these data have been generated in our prototype system.

In the latter case, where the boundary joins a regular patch of the form \eqref{eq:S}, the input data are sampled from such a patch and we need to take into account that the cross-derivatives vary according to the augmented parametrization. More precisely, the transversal derivatives of an augmented patch depend on the local parametrization functions and, according to Lemma \ref{prop:Gamma}, the mapping from the $uv$ domain to the local variables $x,y$ is different at each boundary point.
The two following subsections are devoted to illustrating how a Coons-Gregory patch needs to be defined in order to correctly interpolate the data sampled from an augmented regular patch.

Before delving into details, we introduce the setting and notation.
As in Section \ref{sec:regular_case}, we assume that a parameter interval value is assigned to every mesh edge, both in the regular regions and in the extraordinary ones.
This is reasonable, since an automatic parametrization method generally yields a parameter interval in both cases. For instance, if we wish to use the centripetal parametrization, formula \eqref{eq:d_e} applies regardless of whether an edge belongs in a regular region or not. Consequently, the parameter intervals associated with the edges of an extraordinary face may not form a rectangle. This yields an augmented parametrization for the extraordinary patches as well.
We denote an augmented extraordinary patch $\bS^*$, to distinguish it from the regular patches so far denoted by $\bS$.

Figure \ref{fig:coons} illustrates the labeling of the relevant quantities needed to construct an augmented Coons-Gregory patch. We will adopt a simplified notation with respect to the preceding part of the paper.
The face vertices to be interpolated are denoted by $\bp_i$, $i=1,\dots,4$ and sometimes we shall call these points the \emph{corners}. The boundary curves are denoted by $\bgamma_i$, $i=1,\dots,4$ and $\bchi_i,\bxi_i$ represent the respective first and second order cross-derivative fields. Moreover, $d_0$, $d_1$, $e_0$ $e_1$ are the edge parameter intervals.
\begin{figure}[t]
\centering

\begin{tikzpicture}

\tikzstyle{knot} = [shape=circle,draw=black,fill=white,minimum size=4pt,inner sep=0pt]
\tikzstyle{edge} = [draw=black,dashed]
\tikzstyle{curv} = [draw=black,thick]
\tikzstyle{cd} = [-stealth,black,semithick]

\newcommand{\ScaleFactor}{2.5}

\path ($\ScaleFactor*($(0,0)+(0,0)$)$) coordinate (p0);
\path ($\ScaleFactor*($(1,0)+(0.25,0)$)$) coordinate (p1);
\path ($\ScaleFactor*($(1,1)+(0.75,0)$)$) coordinate (p2);
\path ($\ScaleFactor*($(0,1)+(0.5,0)$)$) coordinate (p3);

\node[anchor=north] at (p0) {$\bp_0$};
\node[anchor=north] at (p1) {$\bp_1$};
\node[anchor=south west] at (p2) {$\bp_2$};
\node[anchor=south] at (p3) {$\bp_3$};

\draw[edge] (p0)--(p1) node[midway,below] {$d_0$};
\draw[edge] (p1)--(p2) node[midway,right] {$e_1$};
\draw[edge] (p3)--(p2) node[midway,below] {$d_1$};
\draw[edge] (p0)--(p3) node[midway,right] {$e_0$};

\node[knot] at (p0) {};
\node[knot] at (p1) {};
\node[knot] at (p2) {};
\node[knot] at (p3) {}; 

\draw[curv,out=45,in=135,looseness=0.5] (p0) to node[midway,above] {$\bgamma_0$} (p1);
\draw[curv,out=90,in=135,looseness=0.5] (p1) to node[midway,left,xshift=-2pt,yshift=-4pt] {$\bgamma_1$} (p2);
\draw[curv,out=45,in=135,looseness=0.5] (p3) to node[midway,above] {$\bgamma_2$} (p2);
\draw[curv,out=90,in=135,looseness=0.5] (p0) to node[midway,left,xshift=-5pt,yshift=-10pt] {$\bgamma_3$} (p3);

\path ($(p1)-\ScaleFactor*(0.35,0.1)$) coordinate (cd0);
\draw[cd,out=80,in=240,looseness=0.75] (cd0) to node[at end,anchor=south,inner sep=0pt] {$\bchi_0{\color{black},\,}\bxi_0$} ($(cd0)+\ScaleFactor*(0.13,0.4)$);
\path ($(p2)-\ScaleFactor*(0.4,0.2)$) coordinate (cd1);
\draw[cd,out=0,in=135,looseness=0.5] (cd1) to node[at end,anchor=south west,inner sep=1.5pt] {$\bchi_1{\color{black},\,}\bxi_1$} ($(cd1)+\ScaleFactor*(0.4,-0.15)$);
\path ($(p2)-\ScaleFactor*(0.45,0.07)$) coordinate (cd2);
\draw[cd,out=70,in=225,looseness=0.75] (cd2) to node[at end,anchor=south,inner sep=0pt] {$\bchi_2{\color{black},\,}\bxi_2$} ($(cd2)+\ScaleFactor*(0.2,0.35)$);
\path ($(p3)-\ScaleFactor*(0.4,0.3)$) coordinate (cd3);
\draw[cd,out=45,in=180,looseness=0.5] (cd3) to node[at end,anchor=west,inner sep=0pt] {$\bchi_3{\color{black},\,}\bxi_3$} ($(cd3)+\ScaleFactor*(0.4,0.15)$);

\path ($\ScaleFactor*(0.4,0)$) coordinate (du);
\path ($\ScaleFactor*(0,0.4)$) coordinate (dv);
\path ($(p0)-\ScaleFactor*(0.35,0.35)$) coordinate (O);
\draw[-stealth] (O)--($(O)+(du)$) node[anchor=west] {$u$};
\draw[-stealth] (O)--($(O)+(dv)$) node[anchor=south] {$v$};


\end{tikzpicture}
\caption{Schematic representation of the quantities needed for the definition of an augmented Coons-Gregory patch (the second-order cross-derivative fields $\bxi_i$ are needed for the biquintically blended patch only).}
\label{fig:coons}
\end{figure}
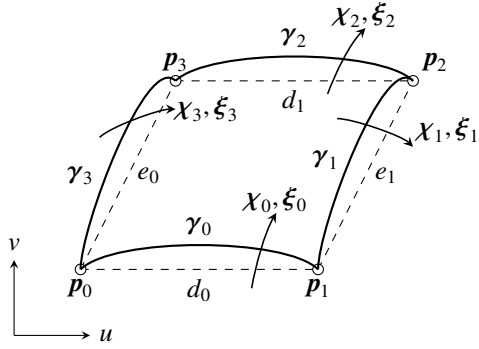

With an extraordinary patch, we associate the parametric domain $[0,1]^2$ and
two local parametrization functions $\delta(v)$ and $\epsilon(u)$, $u,v\in[0,1]$, following the same definition given in Section \ref{sec:regular_case}.
In the current notation, the formulae \eqref{eq:dh}--\eqref{eq:eh} read as follows: $\delta(v)$ is the polynomial $\delta:[0,1]\rightarrow[d_0,d_1]$ of degree $2k+1$ such that $\delta(0)=d_0$, $\delta(1)=d_1$ and $\delta^{(r)}(0)=\delta^{(r)}(1)=0$, $r=1,\dots,k$. Analogously, $\epsilon(u)$ is the polynomial $\epsilon:[0,1]\rightarrow[e_0,e_1]$ of degree $2k+1$ such that $\epsilon(0)=e_0$, $\epsilon(1)=e_1$ and $\epsilon^{(r)}(0)=\epsilon^{(r)}(1)=0$, $r=1,\dots,k$.

We describe the boundary curves in terms of the local variables
\begin{equation}\label{eq:boundary_coord}
   x_0 = u d_0, \qquad
   x_1 = u d_1, \qquad
   y_0 = v e_0, \qquad
   y_1 = v e_1,
  \end{equation}
  in such a way that each boundary segment $\bgamma_i$, is parameterized on the corresponding edge parameter interval.
  This means that $\bgamma_0(x_0)=\bp_0$ when $x_0=0$ and $\bgamma_0(x_0)=\bp_1$ when $x_0=d_0$, with similar interpolation conditions for the other three curves $\bgamma_1(y_1)$, $\bgamma_2(x_1)$, $\bgamma_3(y_0)$.

\subsection{Augmented bicubically blended Coons patches with Gregory correction}
\label{sec:coons3}
For the construction of a $G^1$ Coons-Gregory patch, we shall use as blending functions
the cubic Hermite basis on $[0,1]$, arranged in the vector
\begin{equation}\label{eq:Hvec3}
\bH(u)=
\left(
 -1,
 2u^3-3u^2+1,
 -2u^3+3u^2,
 u^3-2u^2+u,
 u^3-u^2
\right)^T.
\end{equation}
We define the augmented Coons-Gregory patch $\bS^*$ according to
\begin{equation}\label{eq:coons3}
\bS^*(u,v) = - \bH(u)^T \matr{M}_3(u,v) \bH(v),
\end{equation}
where
\begin{equation}\label{eq:M3}
\matr{M}_3(u,v)=
\left(
  \begin{array}{*5{>{\displaystyle}c}}
    0 & \bgamma_0(x_0) & \bgamma_2(x_1) & \epsilon(u)\, \bchi_0(x_0) & \epsilon(u)\, \bchi_2(x_1) \\
    \bgamma_3(y_0) & \bp_0 & \bp_3 & e_0\, \bgamma_3'(0) & e_0\, \bgamma_3'(e_0) \\
    \bgamma_1(y_1) & \bp_1 & \bp_2 & e_1\, \bgamma_1'(0) & e_1\, \bgamma_1'(e_1) \\
    \delta(v)\, \bchi_3(y_0) & d_0\, \bgamma_0'(0) & d_1\, \bgamma_2'(0) & \multicolumn{2}{c}{\multirow{2}{*}{$\matr{\Omega}_{1,1}$}} \\
    \delta(v)\, \bchi_1(y_1) & d_0\, \bgamma_0'(d_0) & d_1\, \bgamma_2'(d_1) & {} & {} \\
  \end{array}
\right),
\end{equation}
for any $(u,v)\in [0,1]^2$, being $\epsilon$ and $\delta$ the related local parametrization functions and $x_0,x_1,y_0,y_1$ determined by \eqref{eq:boundary_coord}.

The choice of expressing the cross-derivative fields in terms of the local variables $x_i$ and $y_i$, $i=0,1$, facilitates the construction of the augmented Coons-Gregory patch in the case where the boundary information is not available from an adjacent regular patch, and thus needs to be heuristically estimated.
In fact, it is more natural to specify the missing fields in the local variables, as we will do in Section \ref{sec:der_fields}.

As a consequence of the augmented parametrization, a suitable scaling factor is associated with certain entries of the patch matrix $\matr{M}_3$ to guarantee that the correct derivatives are interpolated both in the boundary and cross-boundary direction.
More precisely, the derivatives of the boundary curves, $\bgamma_i'$, $i=1,\dots,4$, are mapped to the $uv$ domain multiplying their value by the length of the related parameter interval $d_i$ or $e_i$, $i=0,1$.
In addition, in view of Proposition \eqref{prop:der_scaling}, the scaling factor needed to express the first derivatives $\bchi_0(x_0),\bchi_1(y_1),\bchi_2(x_1),\bchi_3(y_0)$ in the $uv$ domain
corresponds to the value of the appropriate local parametrization function at the evaluation point $(u,v)$.
Therefore, whereas the scaling factor is constant for the derivatives of the boundary curves, for the cross-derivatives it changes at each $(u,v)$.
Finally, we shall also take into account that,
when $\bchi_i$ is drawn from a regular augmented patch $\bS$, then the sampled value of $\bchi_i$ needs to be divided by the appropriate local parametrization function of $\bS$ in order to generate the value of $\bchi$ to be inserted in \eqref{eq:M3}.

To complete the definition of the matrix \eqref{eq:M3}, we need to provide the twist vectors matrix $\matr{\Omega}_{1,1}$, possibly with \emph{Gregory correction} for twist incompatibility. This is given by
  \begin{equation}\label{eq:Omega11}
      \matr{\Omega}_{1,1}\coloneqq
      \left(
        \begin{array}{*2{>{\displaystyle}c}}
          d_0 e_0 \, \frac{u\bchi_3'(0)+v\bchi_0'(0)}{u+v} & d_1 e_0 \, \frac{u\bchi_3'(e_0)+(1-v)\bchi_2'(0)}{u+(1-v)} \\
          d_0 e_1 \, \frac{(1-u)\bchi_1'(0)+v\bchi_0'(d_0)}{(1-u)+v} &
          d_1 e_1 \, \frac{(1-u)\bchi_1'(e_1)+(1-v)\bchi_2'(d_1)}{(1-u)+(1-v)} \\
        \end{array}
      \right),
      \end{equation}
where the scaling factors of type $d_ie_j$, $i,j=0,1$ preceding the rational terms are determined by the same argument used above.
It can be shown through direct verification that the patch $\bS^*$ interpolates the corner points, the boundary curves and the cross-boundary first derivatives. Therefore the resulting composite surface is $G^1$ continuous at the boundaries between regular and extraordinary regions and in the interior of the latter.
This type of continuity is sufficient when the regular regions are constructed from $C^1$ fundamental functions, like in the case of Catmull-Rom splines.
Otherwise, it may be desirable to use the patching scheme with higher continuity which we are going to introduce in the next subsection.

\subsection{Augmented biquintically blended Coons patches with Gregory correction}
\label{sec:coons5}

In order to construct a $G^2$ Coons-Gregory patch we exploit as blending functions the quintic Hermite basis, arranged in the vector
\begin{equation}\label{eq:Hvec5}
\begin{aligned}
\bH(u)=
& \left(
 -1,
 -6u^5+15u^4-10u^3+1,
 6u^5-15u^4+10u^3,
 -3u^5+8u^4-6u^3+u, \right. \\
& \left.
 -3u^5+7u^4-4u^3,
 -\frac{1}{2}u^5+\frac{3}{2}u^4-\frac{3}{2}u^3+\frac{1}{2}u^2,
 \frac{1}{2}u^5-u^4+\frac{1}{2}u^3
 \right)^T,
\end{aligned}
\end{equation}
and we compute the value of the augmented patch $\bS^*$ according to
\begin{equation}\label{eq:coons5}
\bS^*(u,v) = - \bH(u)^T \matr{M}_5(u,v) \bH(v).
\end{equation}
The patch matrix is now
\begin{equation}
\matr{M}_5(u,v) =
\left(
  \begin{array}{*7{>{\displaystyle}c}}
  \multicolumn{5}{c|}{\multirow{5}{*}{$\matr{M_3(u,v)$}}} & \epsilon^2(u) \, \bxi_0(x_0) & \epsilon^2(u)\, \bxi_2(x_1) \\
  {} & {} & {} & {} & \multicolumn{1}{c|}{} & e_0^2\, \bgamma_3''(0) & e_0^2\, \bgamma_3''(e_0) \\
  {} & {} & {} & {} & \multicolumn{1}{c|}{} & e_1^2\, \bgamma_1''(0) & e_1^2\, \bgamma_1''(e_1) \\
  {} & {} & {} & {} & \multicolumn{1}{c|}{} & \multicolumn{2}{c}{\multirow{2}{*}{$\matr{\Omega}_{1,2}$}} \\
  {} & {} & {} & {\phantom{\epsilon(u)\, \bchi_0(x_0)}} & \multicolumn{1}{c|}{\phantom{\epsilon(u)\, \bchi_2(x_1)}} & {} & {} \\
  \cline{1-5}\noalign{\vskip 0.5ex}
  \delta^2(v)\, \bxi_3(y_0) & d_0^2\, \bgamma_0''(0) & d_1^2\, \bgamma_2''(0) & \multicolumn{2}{c}{\multirow{2}{*}{$\matr{\Omega}_{2,1}$}} & \multicolumn{2}{c}{\multirow{2}{*}{$\matr{\Omega}_{2,2}$}} \\
  \delta^2(v)\, \bxi_1(y_1) & d_0^2\, \bgamma_0''(d_0) & d_1^2\, \bgamma_2''(d_1) & {} & {} & {} & {} \\
  \end{array}
\right),
\end{equation}
where $\matr{M}_3$ is given by \eqref{eq:M3} and $\bxi_0(x_0),\bxi_1(y_1),\bxi_2(x_1)$, $\bxi_3(y_0)$ are the cross-boundary second derivatives.
The mixed derivatives matrices, possibly with Gregory correction for twist incompatibility, are defined as follows:
      \begin{align}
      \matr{\Omega}_{1,1} &=
      \left(
        \begin{array}{*2{>{\displaystyle}c}}
          d_0 e_0 \, \frac{u^2\bchi_3'(0)+v^2\bchi_0'(0)}{u^2+v^2} &
          d_1 e_0 \, \frac{u^2\bchi_3'(e_0)+(1-v)^2\bchi_2'(0)}{u^2+(1-v)^2} \\
          d_0 e_1 \, \frac{(1-u)^2\bchi_1'(0)+v^2\bchi_0'(d_0)}{(1-u)^2+v^2} &
          d_1 e_1 \, \frac{(1-u)^2\bchi_1'(e_1)+(1-v)^2\bchi_2'(d_1)}{(1-u)^2+(1-v)^2} \\
        \end{array}
      \right),
      \\
      \matr{\Omega}_{1,2} &=
      \left(
        \begin{array}{*2{>{\displaystyle}c}}
          d_0 e_0^2 \, \frac{u^2\bchi_3''(0)+v^2\bxi_0'(0)}{u^2+v^2} &
          d_1 e_0^2 \, \frac{u^2\bchi_3''(e_0)+(1-v)^2\bxi_2'(0)}{u^2+(1-v)^2} \\
          d_0 e_1^2 \, \frac{(1-u)^2\bchi_1''(0)+v^2\bxi_0'(d_0)}{(1-u)^2+v^2} &
          d_1 e_1^2 \, \frac{(1-u)^2\bchi_1''(e_1)+(1-v)^2\bxi_2'(d_1)}{(1-u)^2+(1-v)^2} \\
        \end{array}
      \right),
      \\
      \matr{\Omega}_{2,1} &=
      \left(
        \begin{array}{*2{>{\displaystyle}c}}
          d_0^2 e_0 \, \frac{u^2\bxi_3'(0)+v^2\bchi_0''(0)}{u^2+v^2} &
          d_1^2 e_0 \, \frac{u^2\bxi_3'(e_0)+(1-v)^2\bchi_2''(0)}{u^2+(1-v)^2} \\
          d_0^2 e_1 \, \frac{(1-u)^2\bxi_1'(0)+v^2\bchi_0''(d_0)}{(1-u)^2+v^2} &
          d_1^2 e_1 \, \frac{(1-u)^2\bxi_1'(e_1)+(1-v)^2\bchi_2''(d_1)}{(1-u)^2+(1-v)^2} \\
        \end{array}
      \right),
      \\
      \matr{\Omega}_{2,2} &=
      \left(
        \begin{array}{*2{>{\displaystyle}c}}
          d_0^2 e_0^2 \, \frac{u^2\bxi_3''(0)+v^2\bxi_0''(0)}{u^2+v^2} &
          d_1^2 e_0^2 \, \frac{u^2\bxi_3''(e_0)+(1-v)^2\bxi_2''(0)}{u^2+(1-v)^2} \\
          d_0^2 e_1^2 \, \frac{(1-u)^2\bxi_1''(0)+v^2\bxi_0''(d_0)}{(1-u)^2+v^2} &
          d_1^2 e_1^2 \, \frac{(1-u)^2\bxi_1''(e_1)+(1-v)^2\bxi_2''(d_1)}{(1-u)^2+(1-v)^2} \\
        \end{array}
      \right).
      \end{align}

The interpretation of the construction and of the scaling factors that appear in $\matr{M}_5$ is conceptually similar to what we have seen in the preceding subsection and therefore no additional comment is needed.
As before, the terms involving second order derivatives require appropriate scaling to map the local variables $x,y$ to the $uv$ domain and again the proper scaling factor can be determined from relation \eqref{eq:der_scaling}.
It can be shown through direct verification that the patch $\bS^*$ interpolates the corner points, the boundary curves and the cross-boundary first and second derivatives. Therefore it guarantees that the resulting composite surface is $G^2$ continuous at the boundaries between regular and extraordinary regions and in the interior of the latter.

\subsection{Mesh regions with extraordinary vertices and \texorpdfstring{$G^1/G^2$}{G1/G2} compatibility conditions}
\label{sec:network}

The construction of the Coons-Gregory patches requires boundary curves and cross-derivative fields.
When this information cannot be sampled from an existing regular patch, then it shall be drawn from the mesh itself, which represents the only available data.

Once the boundary curves have been computed, the cross-derivative fields can be constructed by suitable interpolation of their values at the corners according to standard procedures (see \cite{Farin2002a} and \cite{Hermann1996} for the $G^1$ and $G^2$ case respectively). Therefore, this section is mainly devoted to discussing the construction of the boundary curves. To make the paper self-contained, in Subsection \ref{sec:der_fields} we only briefly review how the cross-derivatives can be generated based on existing approaches.

Methods to create fair curve networks from arbitrary meshes were suggested in \cite{Moreton-Sequin94}, and, to the best of our knowledge, this is the only available reference on the subject.
Our case is different, however, since much of the curve network is created using the local interpolating univariate splines \class and there only remains to define isolated segments of the network.

For meshes that contain extraordinary vertices, we extend our definition of a section polyline, given is Section \ref{sec:regular_case}. A section polyline is a sequence of adjacent edges of the mesh characterized in one of the following ways: i) it is closed and all its vertices are regular or ii) it is open, the first and last points are extraordinary vertices and all the remaining vertices are regular. Accordingly, a section curve is one that interpolates the vertices of a section polyline.

Wherever possible, we wish to construct the patch boundary segments in such a way that, globally, the section curves belong to class \class. We observe that, if $w>4$, some curve segments can be directly determined by locally interpolating the corresponding section polyline.
For example,
with reference to Figure \ref{fig:network_holes}, only the red-colored edges cannot be naturally associated with a local spline interpolant \class and therefore must be treated in an ad hoc manner.

When they can not be otherwise determined, we construct the boundary segments by polynomial interpolation of endpoints and endpoint derivatives.
The latter shall be sampled from existing curve segments when available, in such a way that the corresponding section curve has globally the correct continuity. Otherwise they need to be heuristically estimated.
In doing so, we shall take into account that the derivatives at one vertex should satisfy $G^k$, $k=1,2$ compatibility conditions (depending on the continuity of the Coons-Gregory patch), meaning that locally the resulting network of section curves can be embedded into a $C^k$ surface. In particular, $G^1$ compatibility entails that all the curve tangents lie on the same plane, which is the tangent plane. Conditions for $G^2$ compatibility are more complex and do not have a direct geometric interpretation \cite{Hermann1996,HPS2012}.

Our approach for generating $G^1$ and $G^2$ compatible derivatives is based on least squares polynomial approximation.
We point out that this is not the first time that a similar idea is used.
For example, in \cite{ABCetal2013}, least squares polynomial approximation was exploited to tweak locally the derivatives
of Catmull-Clark subdivision surfaces in the neighborhood of extraordinary vertices.

The outline of the procedure is as follows. First, we generate a proper set of points in the vicinity of the vertex where we want to estimate the derivatives, with the criterion that they should serve as a \qtext{guide} for the shape of the surface that we want to fit. Then we compute a least squares polynomial that approximates these points and we sample its derivatives along proper directions. This method guarantees $G^2$ compatibility and, in our tests, has always generated visually pleasing surfaces in the vicinity of the extraordinary vertices.
The following subsection describes the approach in greater detail.
One can reasonably expect that even better results can be produced by more elaborate techniques and further investigation is an interesting topic for future research.

\subsubsection{Derivatives generation}
\label{sec:der_comp}
Let $\bp_0$ be a vertex of valence $n$ where we want to compute a set of $G^1$ or $G^2$ compatible derivatives. In this section, we denote by $\bp_i$, $i=1,\dots,n$, the endpoints of the edges emanating from $\bp_0$, by $d_i$ the parameter intervals of edges $\overline{\bp_0 \bp_i}$ and by $\bff_i$ the vectors $\bp_i-\bp_0$.

Next, we associate with each edge $\overline{\bp_0\bp_i}$ a vector $\bTau_{\bp_0,\bp_i}$ defined as
\begin{equation}\label{eq:bessel_gen}
\bTau_{\bp_0,\bp_i}=\frac{\alpha_i}{d_i} \bff_i - \frac{1-\alpha_i}{\bar{d}_i} \,\overline{\bff}_i,
\end{equation}
where
\begin{equation}\label{eq:bessel_part}
\alpha_i = \frac{\bar{d}_i}{d_{i}+\bar{d}_i}, \qquad
\bar{d}_i = - \sum_{\substack{j=1\\ j \neq i}}^n \cos\left(\frac{2 \pi (j-i)}{n}\right) d_{j}, \qquad
\overline{\bff}_i = \sum_{\substack{j=1\\ j \neq i}}^n \left|\cos\left(\frac{2 \pi (j-i)}{n}\right)\right| \bff_j.
\end{equation}

When $n=4$, equation \eqref{eq:bessel_gen} reduces to the well-known Bessel estimate for computing an approximation of the first derivative of a parametric curve \cite[Section 9.8]{Farin2002a}.
As a consequence, for general valence $n \neq 4$, $\bTau_{\bp_0,\bp_i}$ represents a heuristic estimate of the first derivative at $\bp_0$ of the curve segment between $\bp_0$ and $\bp_i$.
In particular, we can observe that, when $n$ is even and the points $\bp_i$ have rotational symmetry with respect to $\bp_0$, then $\bTau_{\bp_0,\bp_i}$ corresponds to the Bessel formula applied to the three points $\bp_{i+\frac{n}{2}}, \bp_0, \bp_i$, which are intuitively associated with a curve passing through $\bp_0$.\\

At this point, if only $G^1$ compatibility is required, we can simply get an appropriate set of derivatives at $\bp_0$ by projecting the vectors $\bTau_{\bp_0,\bp_i}$, $i=1,\dots,n$ on a common plane.

We will now proceed to determine a $G^2$ compatible set of derivatives.
In particular, let $\btau_{\bp_0,\bp_i}^{(1)}$ and $\btau_{\bp_0,\bp_i}^{(2)}$ be the first and the second derivatives of the curve segment associated with the edge $\overline{\bp_0\bp_i}$.
Our strategy is to construct a polynomial $\bP$ that interpolates $\bp_0$ and approximates in a least-squares sense a suitable set of points $\bq_j, j=1,\dots,2n$ around $\bp_0$ and set $\btau_{\bp_0,\bp_i}^{(1)}$ and $\btau_{\bp_0,\bp_i}^{(2)}$ as the derivatives of such polynomial along proper directions.
The approximation points $\bq_j$ are chosen so that the polynomial $\bP$ will have a reasonable shape
in a small neighborhood of $\bp_0$.
In particular, for each $i=1,\dots,n$, $\bq_i$ and $\bq_{n+i}$ are respectively the values at parameters $\frac{d_i}{4}$ and $\frac{d_i}{2}$ of the cubic polynomial $\blambda$ such that $\blambda(0)=\bp_0$, $\blambda'(0)=\bTau_{\bp_0,\bp_i}$, $\blambda(d_i)=\bp_i$, $\blambda'(d_i)=\bTau_{\bp_i,\bp_0}$. Their expressions is given explicitly by
\begin{equation}
\begin{aligned}
\bq_i &= \blambda\left(\frac{d_i}{4}\right) = \frac{1}{64}\left(54\bp_0+10\bp_i+3d_i\left(3\bTau_{\bp_0,\bp_i}-\bTau_{\bp_i,\bp_0}\right)\right),\\
\bq_{n+i} &= \blambda\left(\frac{d_i}{2}\right) = \frac{1}{8}\left(4\bp_0+4\bp_i+d_i\left(\bTau_{\bp_0,\bp_i}-\bTau_{\bp_i,\bp_0}\right)\right).
\end{aligned}
\end{equation}

Note that the above vector $\bTau_{\bp_i,\bp_0}$ represents a derivative at $\bp_i$, and thus it shall be sampled from the adjacent segment of the section curve passing through $\bp_i$ and $\bp_0$, when this is available, or otherwise computed from formulae \eqref{eq:bessel_gen}--\eqref{eq:bessel_part}.

We exploit a bivariate polynomial $\bP$ of degree $3$ or $2$ respectively in the case $n\geq 5$ or $n=3,4$.
The coefficients of $\bP$ are determined componentwise by minimizing the expression
\begin{equation}
\sum_{j=1}^{2n} \left(\bP(x_j,y_j)-\bq_j\right)^2,
\end{equation}
where the parametric coordinates $(x_j,y_j)$ associated with the point $\bq_j$ are given by
\begin{equation}\label{eq:rxy}
(x_j,y_j) = r_j\,(\cos\eta_i, \sin\eta_i), \quad j=i,n+i,
\end{equation}
and the angles $\eta_i, i=1,\dots,n$ are obtained by mapping onto the $xy$ plane the spatial configuration formed by the angles $\zeta_i\coloneqq\widehat{\bTau_{\bp_0,\bp_i},\bTau_{\bp_0,\bp_{i+1}}}, i=1,\dots,n$, namely
\begin{equation}
\eta_1=0, \qquad \eta_i=\eta_{i-1}+\zeta_i \frac{2 \pi}{\sum_{j=1}^n \zeta_j}, \quad i=2,\dots,n.
\end{equation}
The value of $r_j$ in \eqref{eq:rxy} is a free parameter and can be exploited to locally tune the shape of the final surface.
When the edge parameter intervals $d_i$ are computed according to \eqref{eq:d_e}, then a possible choice (which we have used in the proposed examples) is
\begin{equation}
r_j=\norm{\bq_j-\bp_0}_2^\alpha.
\end{equation}
Finally, we can determine $\btau_{\bp_0,\bp_i}^{(1)}$ and $\btau^{(2)}_{\bp_0,\bp_i}$, $i=1,\dots,n$ as the first and the second derivatives of $\bP$ at $\bp_0$ in the direction determined by $\eta_i$, namely
\begin{equation}\label{eq:dir_der}
\begin{aligned}
\btau^{(2)}_{\bp_0,\bp_i}&=\frac{\partial \bP}{\partial x} (0,0) \cos\eta_i+ \frac{\partial \bP}{\partial y} (0,0) \sin\eta_i,\\
\btau^{(2)}_{\bp_0,\bp_i}&=\frac{\partial^2 \bP}{\partial x^2} (0,0) \cos^2\eta_i + 2 \frac{\partial^2 \bP}{\partial x \partial y} (0,0) \cos\eta_i \sin\eta_i + \frac{\partial^2 \bP}{\partial y^2} (0,0) \sin^2\eta_i.
\end{aligned}
\end{equation}



\subsubsection{Construction of the cross-boundary derivative fields}
\label{sec:der_fields}

We address the problem of suitably prescribing the first and second-order derivative fields, $\bchi_i$ and $\bxi_i$ respectively, across a boundary curve $\bgamma_i$. Let us suppose that $x\in [0,d_{i}]$ is the local variable which describes $\bgamma_i$. Then we define the cross-boundary derivative fields as
\begin{equation}\label{eq:chi}
\bchi_i(x) = a(x)\bgamma_i'(x) + b(x)\br(x),
\end{equation}
and
\begin{equation}\label{eq:tau}
\bxi_i(x) = a^2(x)\bgamma_i''(x) +s(x) \bgamma_i'(x) + t(x) \br(x) + 2 a(x) b(x) \br'(x)+ b^2(x) \bw(x),
\end{equation}
for suitable polynomials $a,b,s,t$,  $\br$, and $\bw$.
The above definitions ensure respectively $G^1$ and $G^2$ continuity across $\bgamma_i$ \cite[Theorem C]{Hermann1996}.

We observe that, if a regular surface patch \eqref{eq:S} has the
uniform parametrization, then both $\bchi_i$ and $\bxi_i$ have the same degree $g$ as the relative class of fundamental functions \class.
Similarly, also in the extraordinary case we take them to be polynomials of degree $g$.

To determine $\bchi_i$, we observe that $\bgamma_i'$ has degree $g-1$, and thus $a$ has to be at most linear and, by analogy, we can choose $b$ of degree $1$.
As a consequence, $\br$ has degree at most $g-1$.
In particular, our numerical experiments have shown that satisfactory results can be obtained defining $\br$ by either linear or quadratic interpolation.
For the linear case, we interpolate the vectors $\br_0=\bgamma_i'(0) \times \bn$ and $\br_1=\bgamma_i'(d_{i}) \times \bar{\bn}$, where $\bn$ and $\bar{\bn}$ are the normals at the endpoints of $\bgamma_i$.
In the quadratic case, an additional interpolation vector is estimated as $\br_{\text{m}}=\bgamma_i'\left(\frac{d_{i}}{2}\right) \times \bn_{\text{m}}$, where $\bn_{\text{m}}$ is the average of the normals to the two faces sharing the edge associated with $\bgamma_i$.
The values of $a$ and $b$ at the endpoints of $\bgamma_i$ are fixed by the requirement that
$\bchi_i(0)$ and $\bchi_i(d_{i})$ be equal to the first derivatives of $\bgamma_{i-1}$ and $\bgamma_{i+1}$ at $\bgamma_i(0)$ and $\bgamma_i(d_i)$ respectively. Hence $a$ and $b$ are uniquely determined by linear interpolation of their values at the endpoints.

For the construction of $\bxi_i$ we proceed similarly as above. Therefore, we take $s$ and $t$ to be polynomials of degree 1.
Moreover, to keep the number of degrees of freedom as low as possible and because its geometric meaning is not always clear, we set the degree of $\bw$ to $1$. Intuitively, $\bw$ represents how much the surface deviates from its tangent plane.
Thus, to complete its definition, we specify its value at the endpoints of $\bgamma_i$ by using the approach suggested in \cite{Hermann1996}, \ie we set
\begin{align}
\bw(0)&=(\mu^2 \kappa_1 + \nu^2 \kappa_2)\bn,\\
\bw(d_{i})&=(\bar{\mu}^2 \bar{\kappa}_1 + \bar{\nu}^2 \bar{\kappa}_2)\bar{\bn},
\end{align}
where $\kappa_1, \kappa_2$ and $\bar{\kappa}_1, \bar{\kappa}_2$ are the principal curvatures at $\bgamma_i(0)$
and $\bgamma_i(d_{i})$ respectively
and $(\mu,\nu)$ and $(\bar{\mu},\bar{\nu})$ are the coordinates of $\br(0)$ and $\br(d_{i})$ in the local coordinate systems of the principal directions $(\vect{K}_1,\vect{K}_2)$ and $(\overline{\vect{K}}_1, \overline{\vect{K}}_2)$.
Finally, $s$ and $t$ are uniquely determined by linear interpolation of their values at the endpoints of $\gamma_i$, that are fixed by the requirement that $\bxi_i(0)$ and $\bxi_i(d_{i})$ be equal to the second derivatives of $\bgamma_{i-1}$ and $\bgamma_{i+1}$ respectively at $\bgamma_i(0)$ and $\bgamma_i(d_{i})$.
\subsection{Examples}
\label{sec:examples_extraordinary}
In this section we discuss some examples of augmented surfaces that interpolate meshes with extraordinary vertices.
The displayed surfaces are based upon the class of univariate splines $D^5C^2P^2S^4$, which offers a good tradeoff between high continuity (the corresponding surfaces are $G^2$ continuous) and small support width (and thus higher computational efficiency).

Following the same outline of the regular case examples (cf.\ Section \ref{sec:examples_regular}), we start by comparing surfaces generated through the augmented and mean parameterizations.
For meshes with extraordinary vertices, the mean parametrization can be generalized along the same lines of \cite{Cashman-Augsdorfer-Dodgson-Sabin2009}. In particular, one parameter interval is assigned to every edge belonging in the same edge ribbon. For every ribbon, this parameter interval is the average of the intervals of all edges in the ribbon. This strategy results in the configuration illustrated in Figure \ref{fig:ex4_4} and obviously generalizes the analogous setting of parameters used in the regular case.

Figure \ref{fig:ex3} depicts a composite surface made of all regular patches and a similar surface that contains an extraordinary vertex, corresponding to the meshes in Figures \ref{fig:ex4_1} and \ref{fig:ex4_2}. In both cases,
the mean parametrization gives rise to an unwanted folding and self intersection of the surface. On the contrary, the augmented surfaces are free of interpolation artifacts and approximate faithfully the input mesh. The different result of the two parameterizations is readily apparent from the shaded display and is further emphasized by the curvature graph.

\begin{figure}[t]
\centering
\subfigure[]
{\raisebox{0.5cm}{\includegraphics[width=0.85\textwidth/4,trim={0 0 0 10cm},clip]{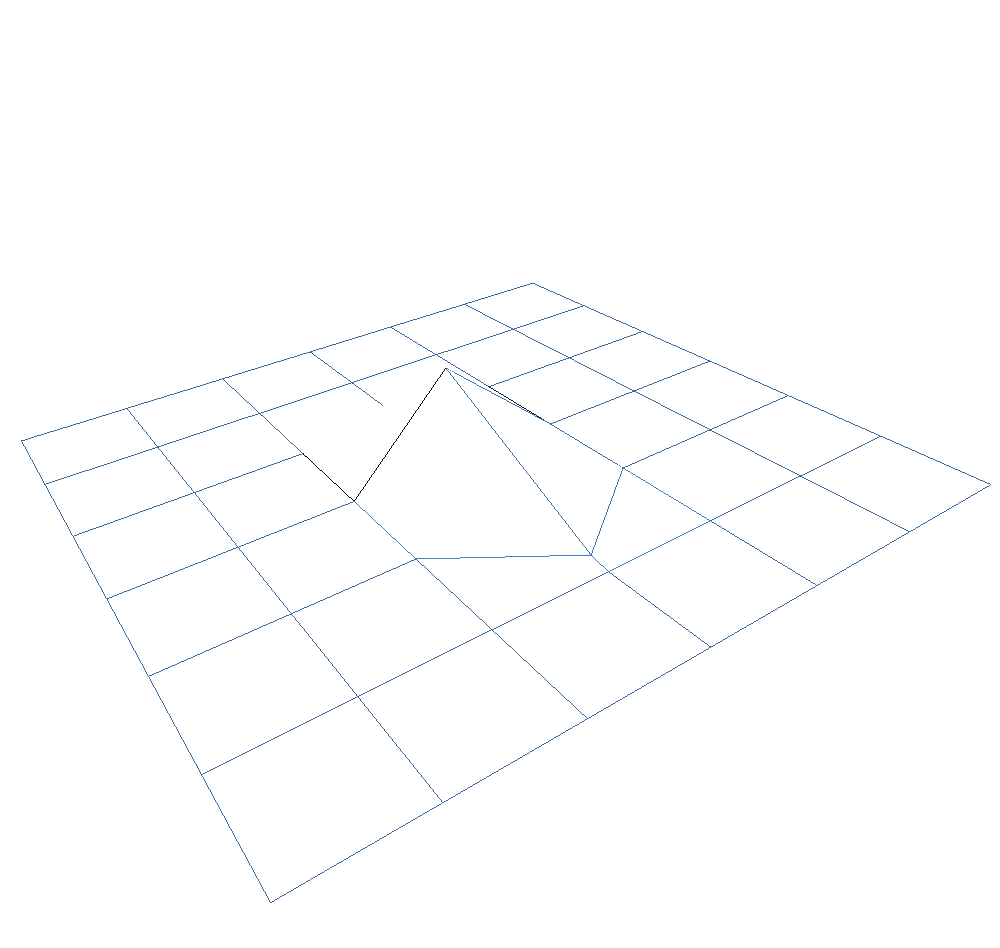}\label{fig:ex4_1}}}
\hfill
\subfigure[]
{\raisebox{0.5cm}{\includegraphics[width=0.85\textwidth/4,trim={0 0 0 10cm},clip]{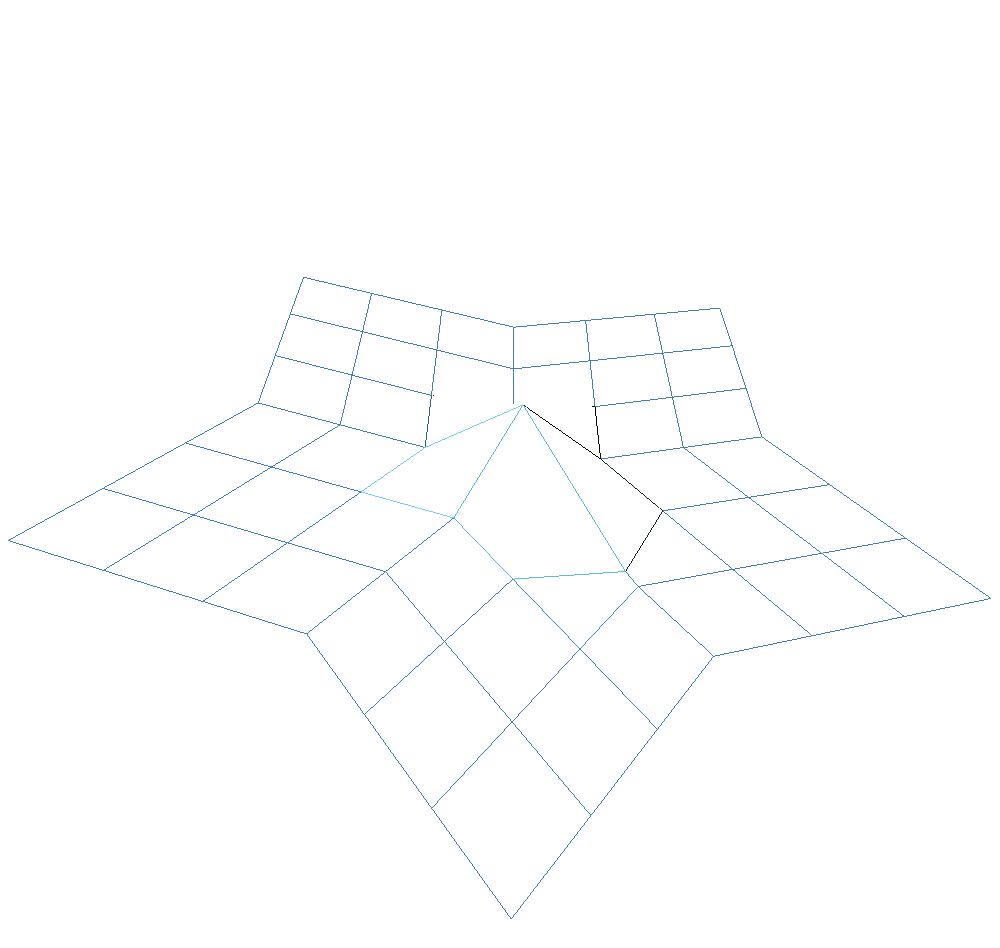}\label{fig:ex4_2}}}
\hfill
\subfigure[]
{

\begin{tikzpicture}[scale=0.65]

\tikzstyle{rv} = [shape=circle,draw=black,fill=white,minimum size=4pt,inner sep=0pt]
\tikzstyle{ev} = [shape=circle,draw=black,fill=black,minimum size=4pt,inner sep=0pt]
\tikzstyle{edge} = [draw=black,dashed]
\tikzstyle{radial} = [draw=black,solid]
\tikzstyle{eedge} = [draw=black,thick]
\tikzstyle{curv} = [draw=colspline,thick]
\tikzstyle{ecurv} = [draw=red,thick]

\newcommand{\Edge}{1.5}
\newcommand{\AngleN}{360/4}

\newcommand{\Sc}{1.2}
\newcommand{\Scc}{2}
\newcommand{\Sccc}{0.2}

\path (0,0) coordinate (V);
\path ($\Edge*({cos(0*\AngleN)},{sin(0*\AngleN)})$) coordinate (p1);
\path ($\Sc*\Edge*({cos(0*\AngleN)},{sin(0*\AngleN)})$) coordinate (t1);
\path ($\Scc*\Edge*({cos(0*\AngleN)},{sin(0*\AngleN)})$) coordinate (s1);
\path ($\Edge*({cos(1*\AngleN)},{sin(1*\AngleN)})$) coordinate (p2);
\path ($\Sc*\Edge*({cos(1*\AngleN)},{sin(1*\AngleN)})$) coordinate (t2);
\path ($\Scc*\Edge*({cos(1*\AngleN)},{sin(1*\AngleN)})$) coordinate (s2);
\path ($\Edge*({cos(2*\AngleN)},{sin(2*\AngleN)})$) coordinate (p3);
\path ($\Sc*\Edge*({cos(2*\AngleN)},{sin(2*\AngleN)})$) coordinate (t3);
\path ($\Scc*\Edge*({cos(2*\AngleN)},{sin(2*\AngleN)})$) coordinate (s3);
\path ($\Edge*({cos(3*\AngleN)},{sin(3*\AngleN)})$) coordinate (p4);
\path ($\Sc*\Edge*({cos(3*\AngleN)},{sin(3*\AngleN)})$) coordinate (t4);
\path ($\Scc*\Edge*({cos(3*\AngleN)},{sin(3*\AngleN)})$) coordinate (s4);
\path ($1.5*\Edge*({cos(1/2*\AngleN)},{sin(1/2*\AngleN)})$) coordinate (f1);
\path ($1.5*\Edge*({cos(3/2*\AngleN)},{sin(3/2*\AngleN)})$) coordinate (f2);
\path ($1.5*\Edge*({cos(5/2*\AngleN)},{sin(5/2*\AngleN)})$) coordinate (f3);
\path ($1.5*\Edge*({cos(7/2*\AngleN)},{sin(7/2*\AngleN)})$) coordinate (f4);
\path ($(f1)+\Sccc*($(f1)-(p1)$)$) coordinate (ss1);
\path ($(f1)+\Sccc*($(s1)-(p1)$)$) coordinate (ssss1);
\path ($(f2)+\Sccc*($(f2)-(p2)$)$) coordinate (ss2);
\path ($(f2)+\Sccc*($(s2)-(p2)$)$) coordinate (ssss2);
\path ($(f3)+\Sccc*($(f3)-(p3)$)$) coordinate (ss3);
\path ($(f3)+\Sccc*($(s3)-(p3)$)$) coordinate (ssss3);
\path ($(f4)+\Sccc*($(f4)-(p4)$)$) coordinate (ss4);
\path ($(f4)+\Sccc*($(s4)-(p4)$)$) coordinate (ssss4);

\draw[radial] (V)--(p1) node[midway,below,xshift=0.2cm,yshift=0.05cm] {$a$};
\draw[radial] (V)--(p2) node[midway,right,xshift=-0.05cm] {$b$};
\draw[radial] (V)--(p3) node[midway,below,xshift=0pt,yshift=0.05cm] {$c$};
\draw[radial] (V)--(p4) node[midway,right,xshift=-0.05cm] {$d$};
\draw[radial] (p1)--(t1) {};
\draw[radial] (p2)--(t2) {};
\draw[radial] (p3)--(t3) {};
\draw[radial] (p4)--(t4) {};

\draw[radial] (p1)--(f1) node[midway,right,xshift=-0.05cm]{$b$};
\draw[radial] (p2)--(f2) node[midway,below,xshift=0pt,yshift=0.05cm]{$c$};
\draw[radial] (p3)--(f3) node[midway,right,xshift=-0.05cm]{$d$};
\draw[radial] (p4)--(f4) node[midway,below,xshift=0.2cm,yshift=0.05cm]{$a$};

\draw[radial] (f1)--(p2) node[midway,below,xshift=0.2cm,yshift=0.05cm]{$a$};
\draw[radial] (f2)--(p3) node[midway,right,xshift=-0.05cm]{$b$};
\draw[radial] (f3)--(p4) node[midway,below,xshift=0pt,yshift=0.05cm]{$c$};
\draw[radial] (f4)--(p1) node[midway,right,xshift=-0.05cm]{$d$};

\draw[radial] (f1)--(ss1) {};
\draw[radial] (f2)--(ss2) {};
\draw[radial] (f3)--(ss3) {};
\draw[radial] (f4)--(ss4) {};
\draw[radial] (f1)--(ssss1) {};
\draw[radial] (f2)--(ssss2) {};
\draw[radial] (f3)--(ssss3) {};
\draw[radial] (f4)--(ssss4) {};


\node[ev] at (V) {};
\node[rv] at (p1) {};
\node[rv] at (p2) {};
\node[rv] at (p3) {};
\node[rv] at (p4) {};
\node[rv] at (f1) {};
\node[rv] at (f2) {};
\node[rv] at (f3) {};
\node[rv] at (f4) {};





\end{tikzpicture}\label{fig:ex4_3}}
\hfill
\subfigure[]
{

\begin{tikzpicture}[scale=0.65]

\tikzstyle{rv} = [shape=circle,draw=black,fill=white,minimum size=4pt,inner sep=0pt]
\tikzstyle{ev} = [shape=circle,draw=black,fill=black,minimum size=4pt,inner sep=0pt]
\tikzstyle{edge} = [draw=black,dashed]
\tikzstyle{radial} = [draw=black,solid]
\tikzstyle{eedge} = [draw=black,thick]
\tikzstyle{curv} = [draw=colspline,thick]
\tikzstyle{ecurv} = [draw=red,thick]

\newcommand{\Edge}{1.5}
\newcommand{\AngleN}{360/5}

\newcommand{\Sc}{1.2}
\newcommand{\Scc}{2}
\newcommand{\Sccc}{0.2}

\path (0,0) coordinate (V);
\path ($\Edge*({cos(0*\AngleN)},{sin(0*\AngleN)})$) coordinate (p1);
\path ($\Sc*\Edge*({cos(0*\AngleN)},{sin(0*\AngleN)})$) coordinate (t1);
\path ($\Scc*\Edge*({cos(0*\AngleN)},{sin(0*\AngleN)})$) coordinate (s1);
\path ($\Edge*({cos(1*\AngleN)},{sin(1*\AngleN)})$) coordinate (p2);
\path ($\Sc*\Edge*({cos(1*\AngleN)},{sin(1*\AngleN)})$) coordinate (t2);
\path ($\Scc*\Edge*({cos(1*\AngleN)},{sin(1*\AngleN)})$) coordinate (s2);
\path ($\Edge*({cos(2*\AngleN)},{sin(2*\AngleN)})$) coordinate (p3);
\path ($\Sc*\Edge*({cos(2*\AngleN)},{sin(2*\AngleN)})$) coordinate (t3);
\path ($\Scc*\Edge*({cos(2*\AngleN)},{sin(2*\AngleN)})$) coordinate (s3);
\path ($\Edge*({cos(3*\AngleN)},{sin(3*\AngleN)})$) coordinate (p4);
\path ($\Sc*\Edge*({cos(3*\AngleN)},{sin(3*\AngleN)})$) coordinate (t4);
\path ($\Scc*\Edge*({cos(3*\AngleN)},{sin(3*\AngleN)})$) coordinate (s4);
\path ($\Edge*({cos(4*\AngleN)},{sin(4*\AngleN)})$) coordinate (p5);
\path ($\Sc*\Edge*({cos(4*\AngleN)},{sin(4*\AngleN)})$) coordinate (t5);
\path ($\Scc*\Edge*({cos(4*\AngleN)},{sin(4*\AngleN)})$) coordinate (s5);
\path ($1.5*\Edge*({cos(1/2*\AngleN)},{sin(1/2*\AngleN)})$) coordinate (f1);
\path ($1.5*\Edge*({cos(3/2*\AngleN)},{sin(3/2*\AngleN)})$) coordinate (f2);
\path ($1.5*\Edge*({cos(5/2*\AngleN)},{sin(5/2*\AngleN)})$) coordinate (f3);
\path ($1.5*\Edge*({cos(7/2*\AngleN)},{sin(7/2*\AngleN)})$) coordinate (f4);
\path ($1.5*\Edge*({cos(9/2*\AngleN)},{sin(9/2*\AngleN)})$) coordinate (f5);
\path ($(f1)+\Sccc*($(f1)-(p1)$)$) coordinate (ss1);
\path ($(f1)+\Sccc*($(s1)-(p1)$)$) coordinate (ssss1);
\path ($(f2)+\Sccc*($(f2)-(p2)$)$) coordinate (ss2);
\path ($(f2)+\Sccc*($(s2)-(p2)$)$) coordinate (ssss2);
\path ($(f3)+\Sccc*($(f3)-(p3)$)$) coordinate (ss3);
\path ($(f3)+\Sccc*($(s3)-(p3)$)$) coordinate (ssss3);
\path ($(f4)+\Sccc*($(f4)-(p4)$)$) coordinate (ss4);
\path ($(f4)+\Sccc*($(s4)-(p4)$)$) coordinate (ssss4);
\path ($(f5)+\Sccc*($(f5)-(p1)$)$) coordinate (ss5);
\path ($(f5)+\Sccc*($(f5)-(p5)$)$) coordinate (ssss5);



\draw[radial] (p1)--(t1) {};
\draw[radial] (p2)--(t2) {};
\draw[radial] (p3)--(t3) {};
\draw[radial] (p4)--(t4) {};
\draw[radial] (p5)--(t5) {};


\draw[radial] (V)--(p1) node[midway,below,xshift=0pt,yshift=0.07cm] {$a$};
\draw[radial] (V)--(p2) node[midway,right,xshift=-0.1cm,yshift=-0.1cm] {$b$};
\draw[radial] (V)--(p3) node[midway,below,xshift=-0.1cm,yshift=0.1cm] {$c$};
\draw[radial] (V)--(p4) node[midway,below,xshift=0pt,yshift=0.1cm] {$d$};
\draw[radial] (V)--(p5) node[midway,left,xshift=0.12cm,yshift=-0.2cm] {$e$};

\draw[radial] (p1)--(f1) node[midway,right,xshift=-0.1cm,yshift=-0.1cm]{$b$};
\draw[radial] (p2)--(f2) node[midway,below,xshift=0pt,yshift=0.05cm]{$c$};
\draw[radial] (p3)--(f3) node[midway,below,xshift=0.1cm,,yshift=0.15cm]{$d$};
\draw[radial] (p4)--(f4) node[midway,left,xshift=0.15cm,yshift=-0.2cm]{$e$};
\draw[radial] (p5)--(f5) node[midway,below,xshift=0cm,yshift=0.07cm]{$a$};

\draw[radial] (f1)--(p2) node[midway,below,xshift=0pt,yshift=0.07cm]{$a$};
\draw[radial] (f2)--(p3) node[midway,right,xshift=-0.1cm,yshift=-0.1cm]{$b$};
\draw[radial] (f3)--(p4) node[midway,below,xshift=-0.1cm,yshift=0.1cm]{$c$};
\draw[radial] (f4)--(p5) node[midway,below,xshift=0pt,,yshift=0.1cm]{$d$};
\draw[radial] (f5)--(p1) node[midway,left,xshift=0.1cm,yshift=-0.1cm]{$e$};

\draw[radial] (f1)--(ss1) {};
\draw[radial] (f2)--(ss2) {};
\draw[radial] (f3)--(ss3) {};
\draw[radial] (f4)--(ss4) {};
\draw[radial] (f5)--(ss5) {};
\draw[radial] (f1)--(ssss1) {};
\draw[radial] (f2)--(ssss2) {};
\draw[radial] (f3)--(ssss3) {};
\draw[radial] (f4)--(ssss4) {};
\draw[radial] (f5)--(ssss5) {};


\node[ev] at (V) {};
\node[rv] at (p1) {};
\node[rv] at (p2) {};
\node[rv] at (p3) {};
\node[rv] at (p4) {};
\node[rv] at (p5) {};
\node[rv] at (f1) {};
\node[rv] at (f2) {};
\node[rv] at (f3) {};
\node[rv] at (f4) {};
\node[rv] at (f5) {};





\end{tikzpicture}\label{fig:ex4_4}}
\caption{\subref{fig:ex4_1} A regular mesh and \subref{fig:ex4_2} a mesh with an extraordinary vertex of valence 5. \subref{fig:ex4_3}--\subref{fig:ex4_4} Corresponding mean parametrization in the neighborhood of the central vertex: each letter indicates a different edge parameter interval value.} \label{fig:ex4}
\end{figure}

\begin{figure}[t]
\centering
\subfigure[]
{\includegraphics[width=0.85\textwidth/4]{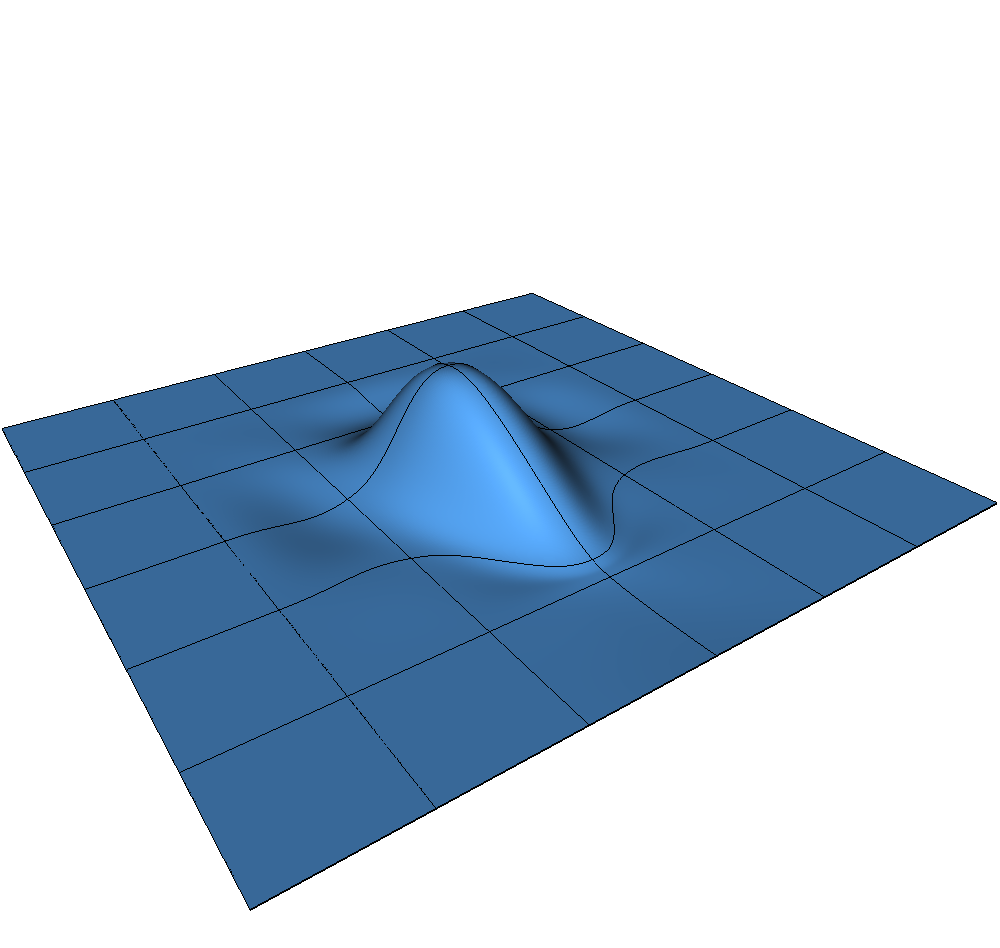}\label{fig:ex3_1}}
\hfill
\subfigure[]
{\includegraphics[width=0.85\textwidth/4]{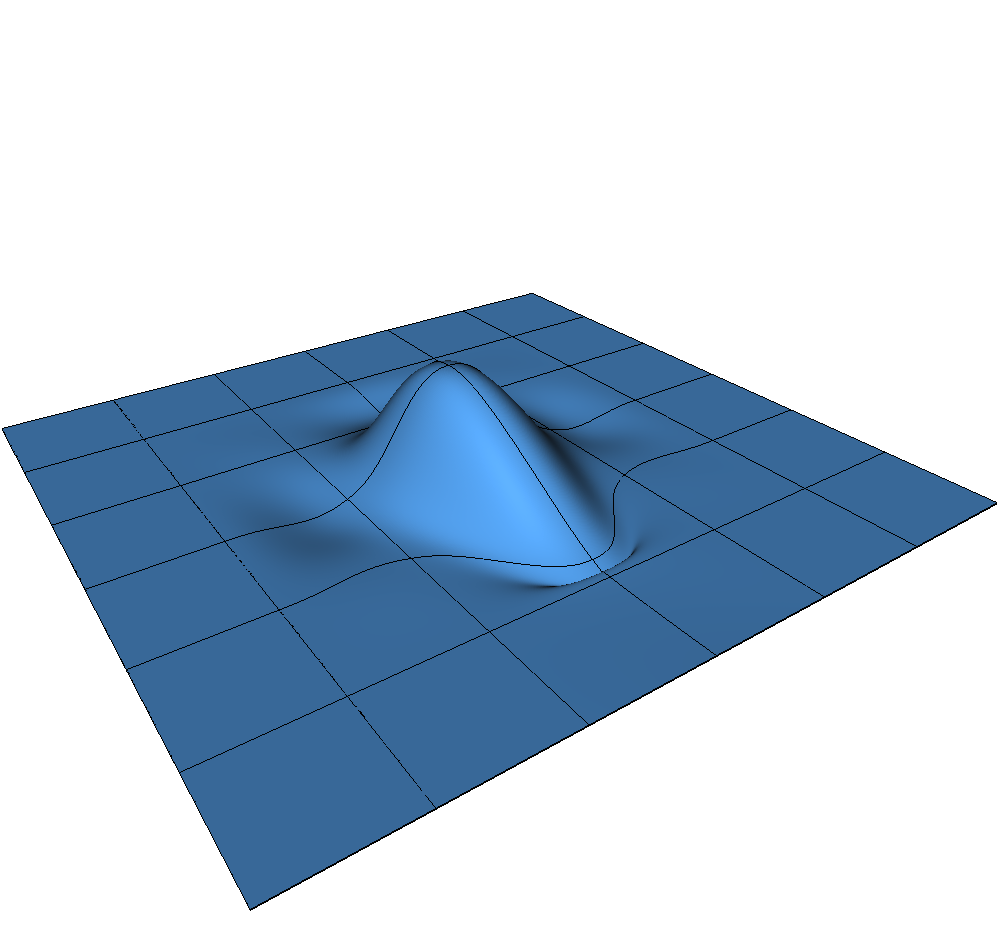}\label{fig:ex3_2}}
\hfill
\subfigure[]
{\includegraphics[width=0.85\textwidth/4]{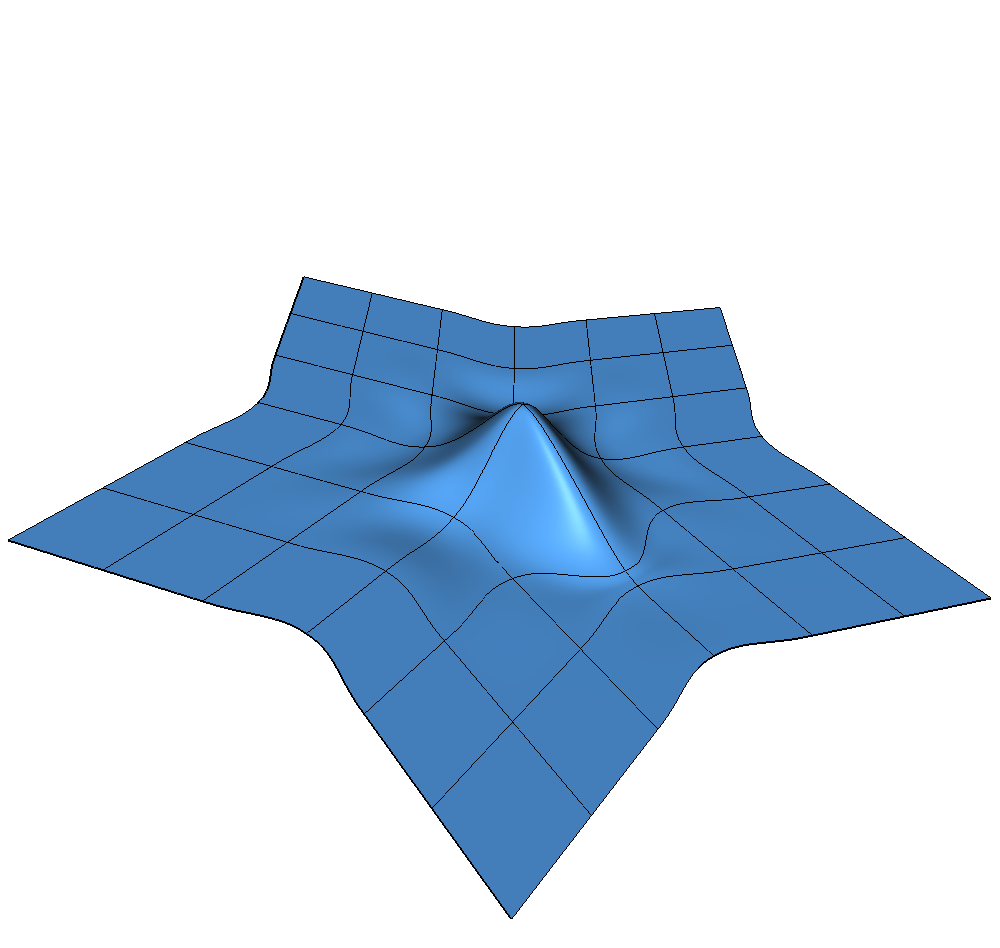}\label{fig:ex3_3}}
\hfill
\subfigure[]
{\raisebox{-0mm}{\includegraphics[width=0.85\textwidth/4]{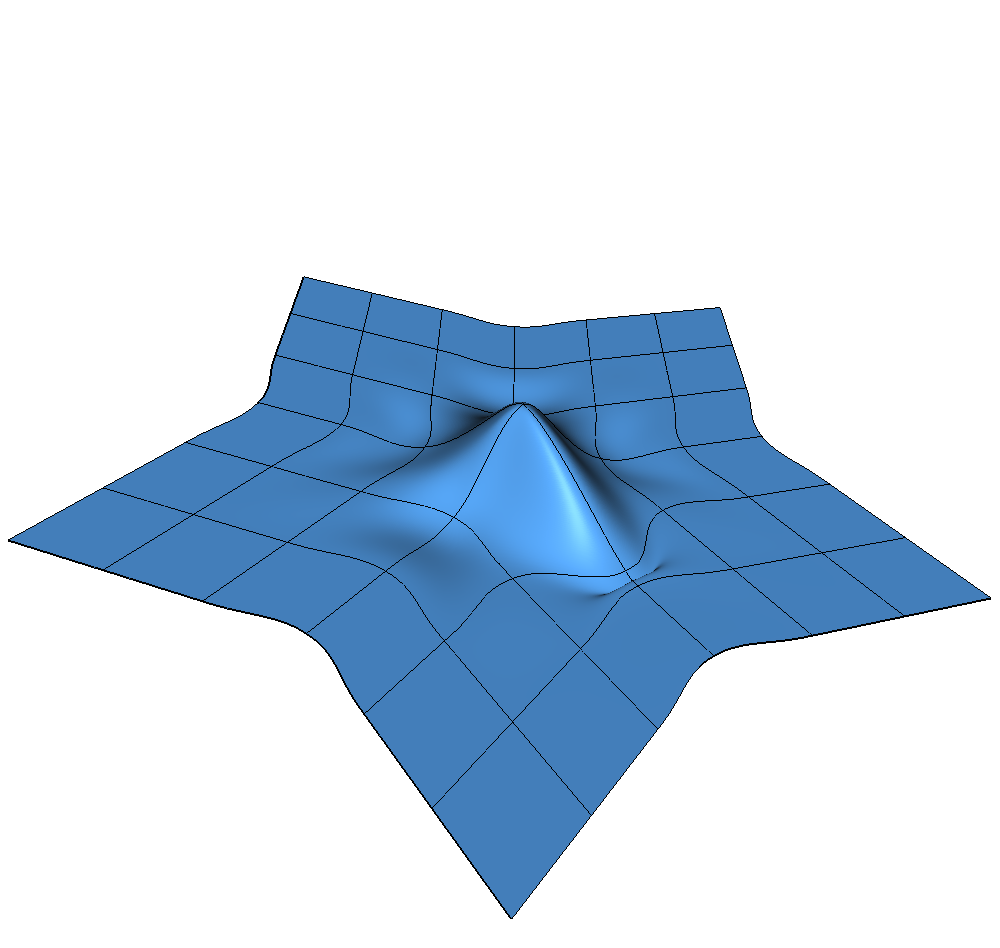}\label{fig:ex3_4}}}
\\
\subfigure[]
{\includegraphics[width=0.85\textwidth/4]{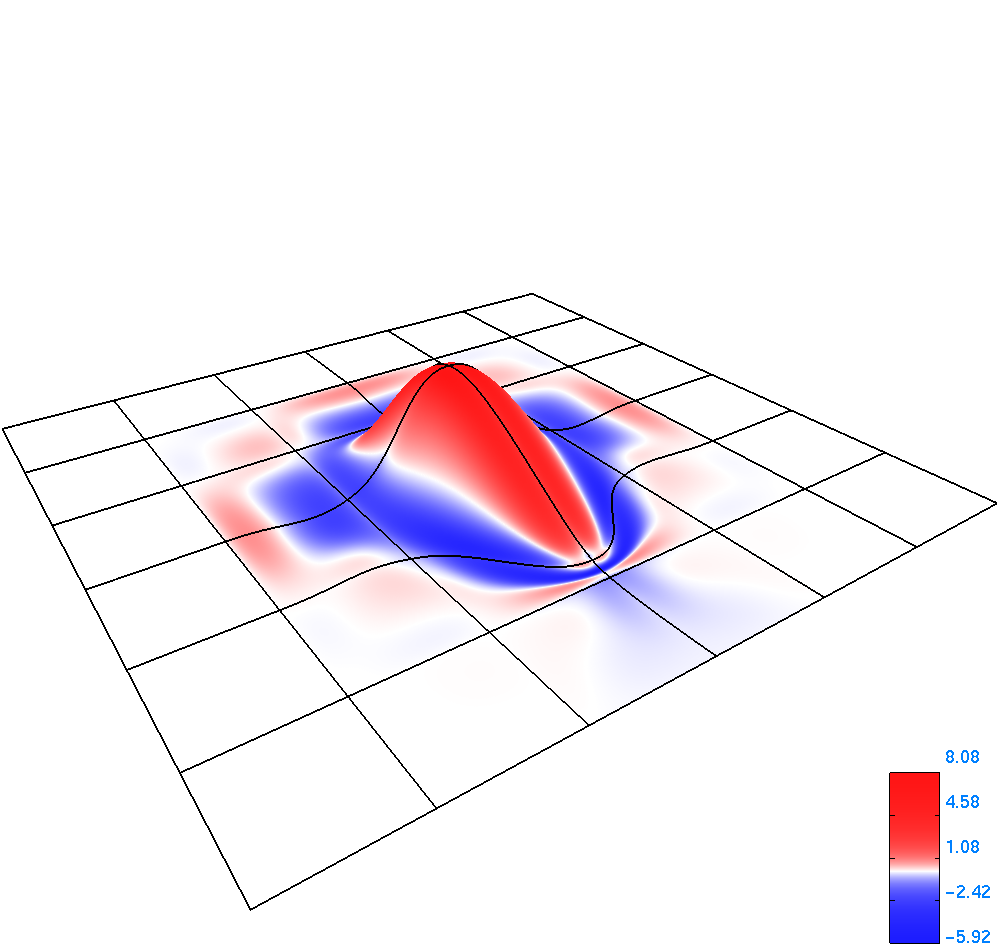}\label{fig:ex3_5}}
\hfill
\subfigure[]
{\includegraphics[width=0.85\textwidth/4]{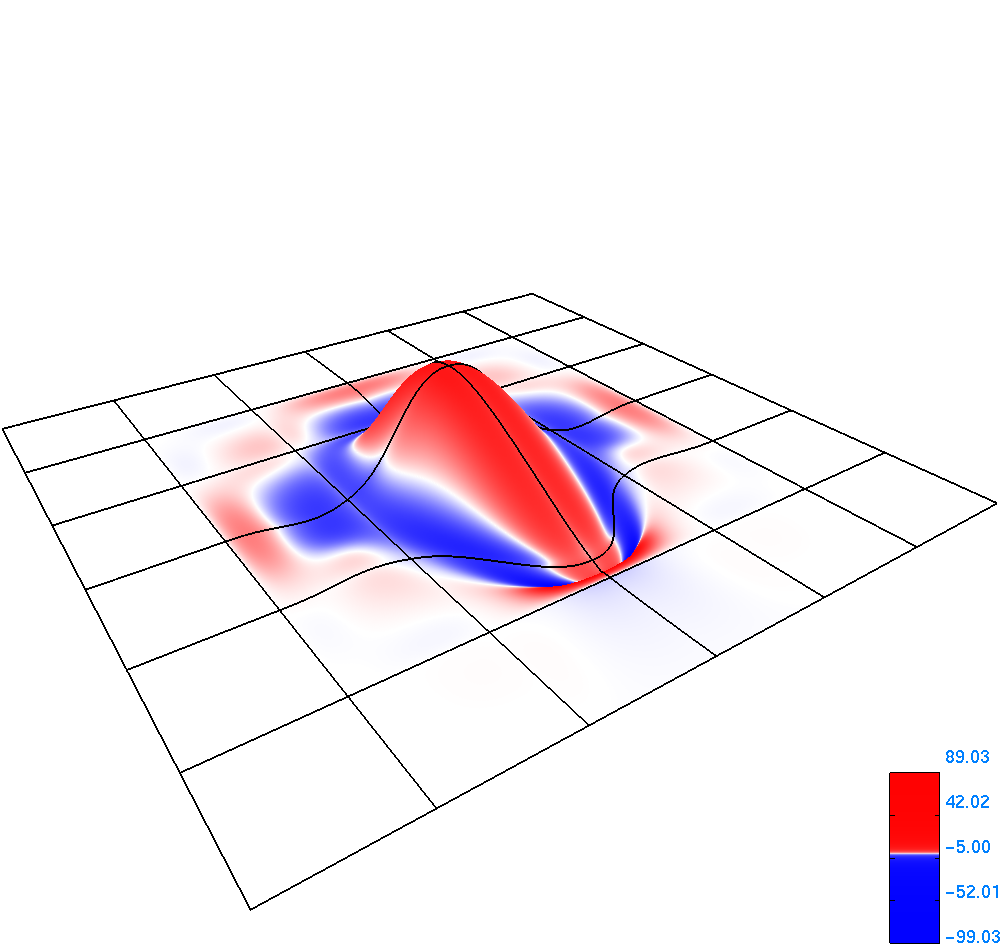}\label{fig:ex3_6}}
\hfill
\subfigure[]
{\includegraphics[width=0.85\textwidth/4]{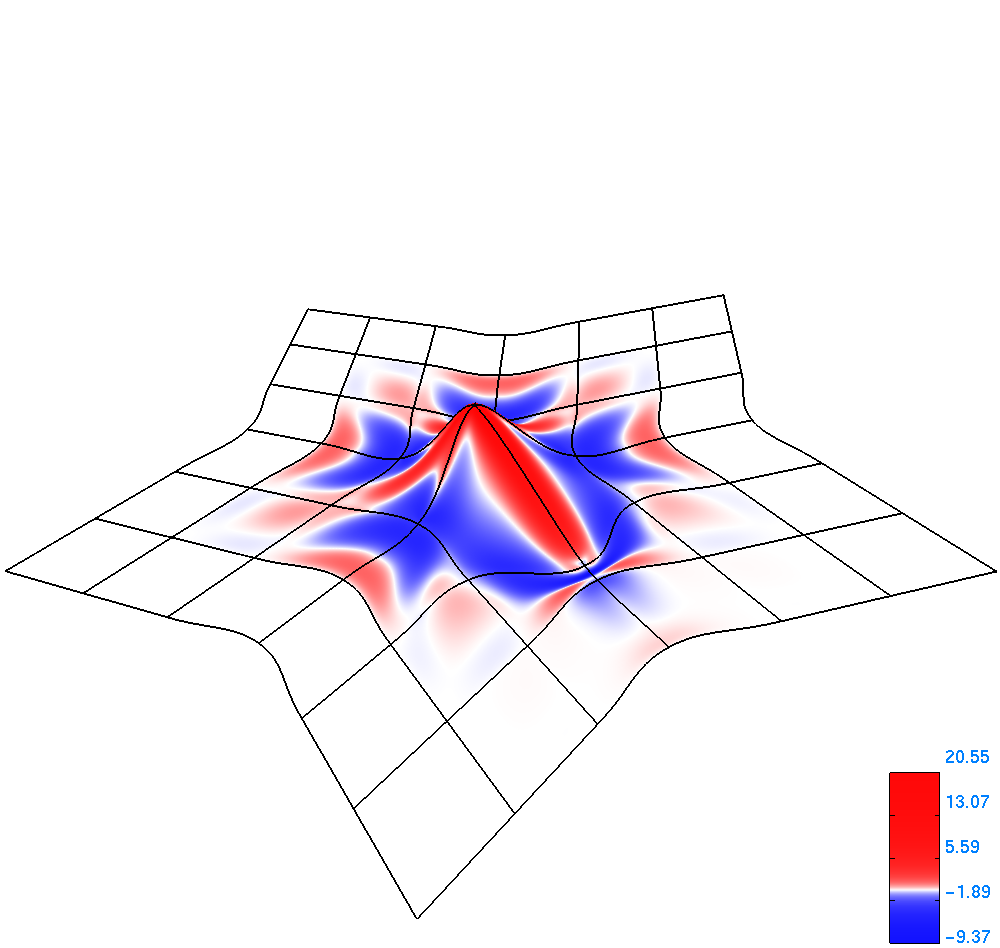}\label{fig:ex3_7}}
\hfill
\subfigure[]
{\includegraphics[width=0.85\textwidth/4]{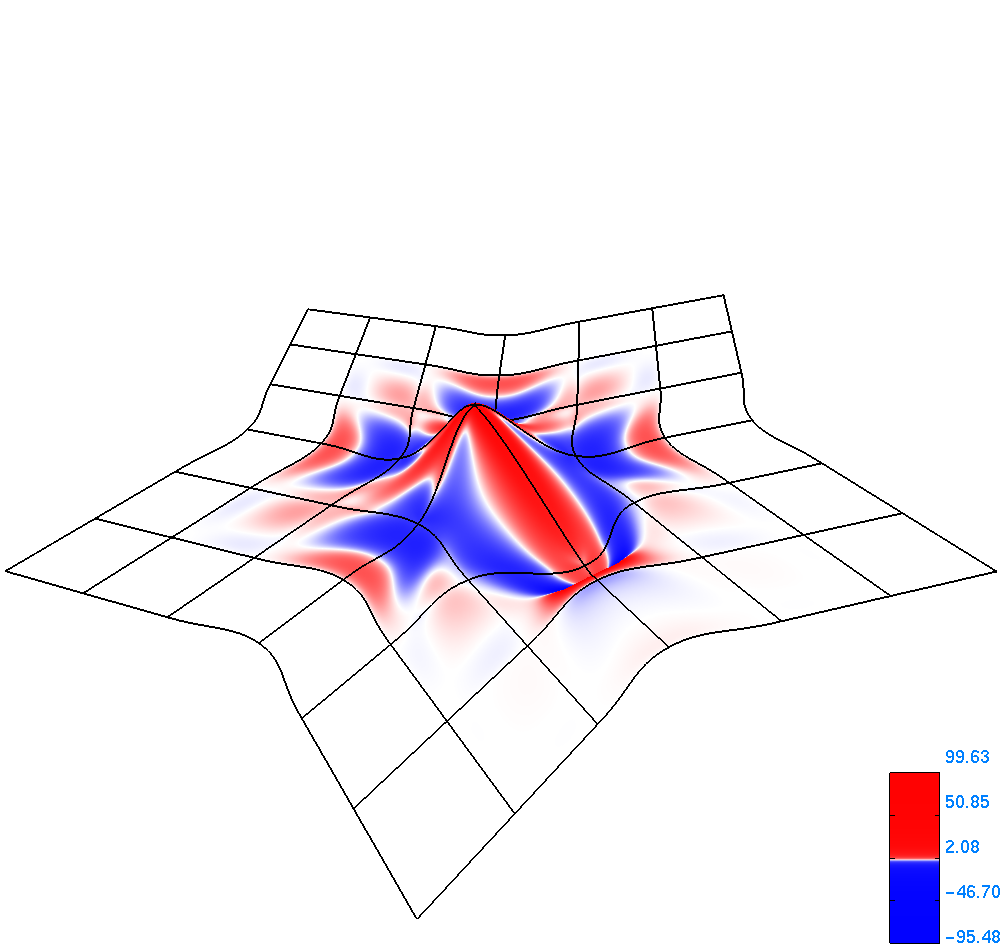}\label{fig:ex3_8}}
\caption{First line: surfaces from the class $D^5C^2P^2S^4$ interpolating the meshes in Figures \ref{fig:ex4_1} and \ref{fig:ex4_2} with augmented parametrization (\subref{fig:ex3_1} and \subref{fig:ex3_3}) and mean parametrization  (\subref{fig:ex3_2} and \subref{fig:ex3_4}).  Second line: corresponding mean curvature graph.}\label{fig:ex3}
\end{figure}

Figure \ref{fig:ex5} illustrates $G^2$ continuous complex surfaces of high quality obtained through the augmented parametrization.
Figures \ref{fig:ex5_2} and \ref{fig:ex5_6} show the regular patches only and the network of section curves.
As discussed in the previous section, the section curves passing through the extraordinary vertices can be thought off as open curves.
The curve segments emanating from the extraordinary vertices are generated as degree 5 polynomials that interpolate endpoint values and derivatives.
As a result, the entire section curves belong to class $D^5C^2P^2S^4$.
The augmented Coons-Gregory patches are illustrated in Figures \ref{fig:ex5_3} and \ref{fig:ex5_7}, whereas
Figures \ref{fig:ex5_4} and \ref{fig:ex5_8} show the entire composite surface.

For the open surface depicted in Figure \ref{fig:ex5}, the boundary patches are regular augmented patches of the form  \eqref{eq:S}.
To compute these patches, we have suitably extrapolated the mesh structure across the boundary in order to obtain an additional layer of faces and vertices.

Figure \ref{fig:ex7} compares the results of the augmented and mean parameterizations for the meshes in Figure \ref{fig:ex5} and emphasizes the better quality of the augmented surfaces.
In particular, in the extraordinary region, the curvature of the surface with mean parametrization oscillates several times between positive and negative values, while this does not happen when the augmented parametrization is used.

\begin{figure}[t]
\centering
\subfigure[Input mesh]
{\includegraphics[width=0.95\textwidth/4]{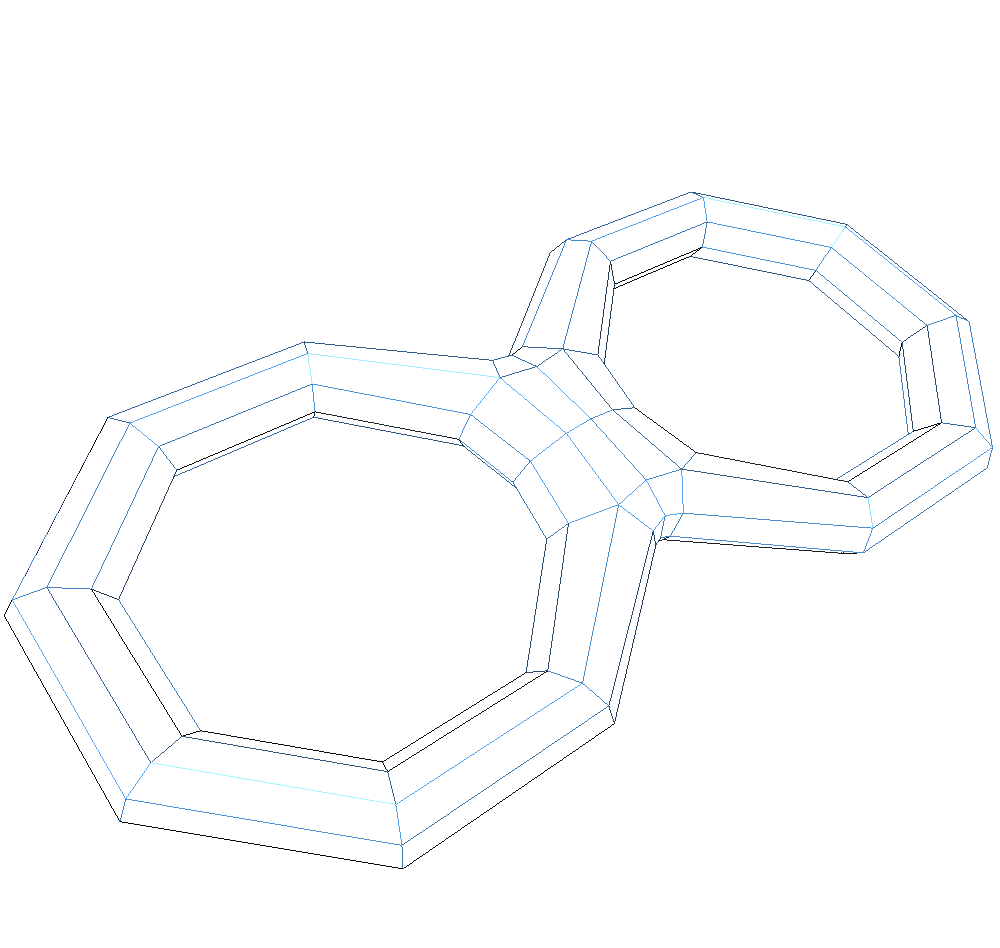}\label{fig:ex5_1}}
\hfill
\subfigure[Regular patches]
{\includegraphics[width=0.95\textwidth/4]{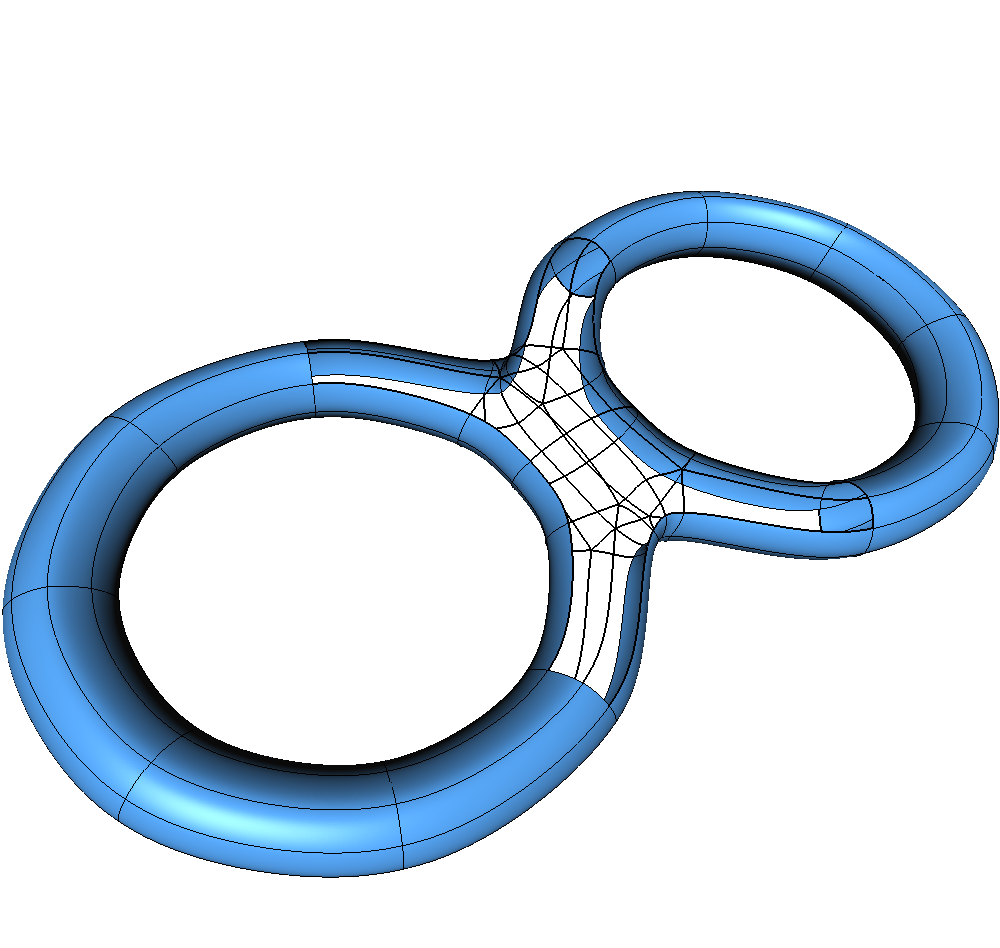}\label{fig:ex5_2}}
\hfill
\subfigure[Extraordinary patches]
{\includegraphics[width=0.95\textwidth/4]{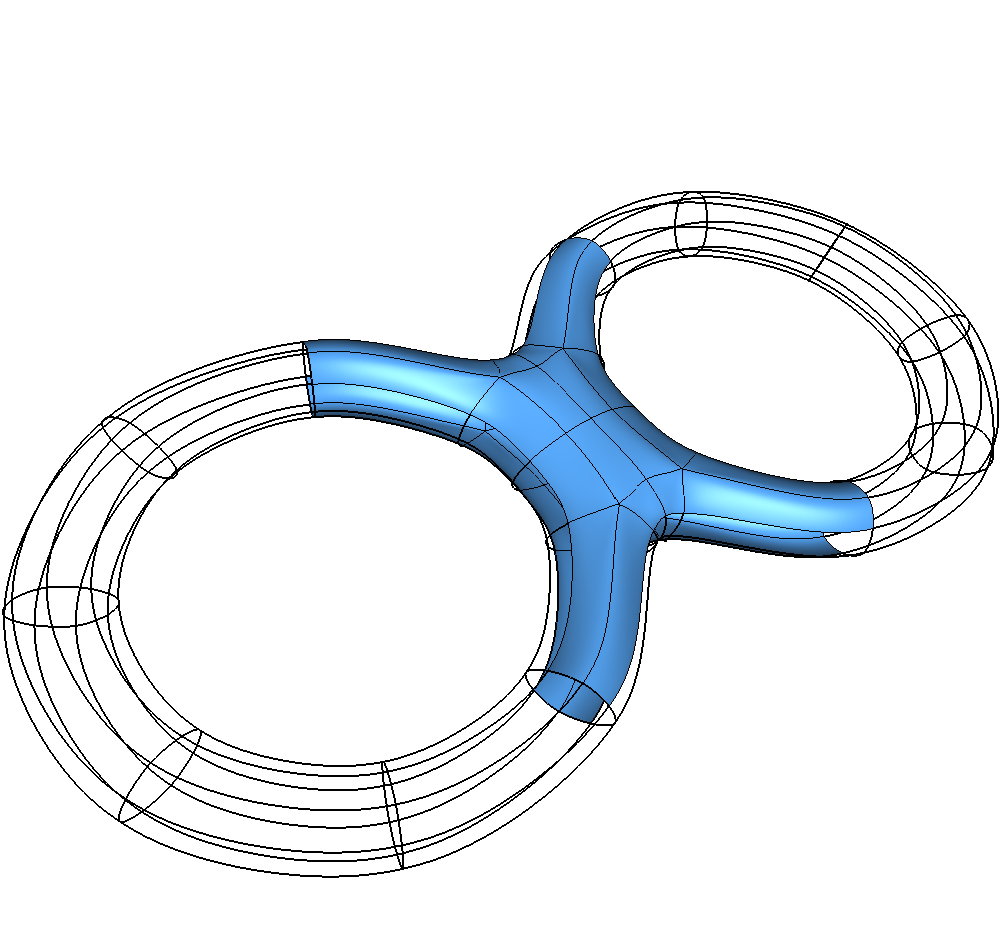}\label{fig:ex5_3}}
\hfill
\subfigure[Entire composite surface]
{\raisebox{-0mm}{\includegraphics[width=0.95\textwidth/4]{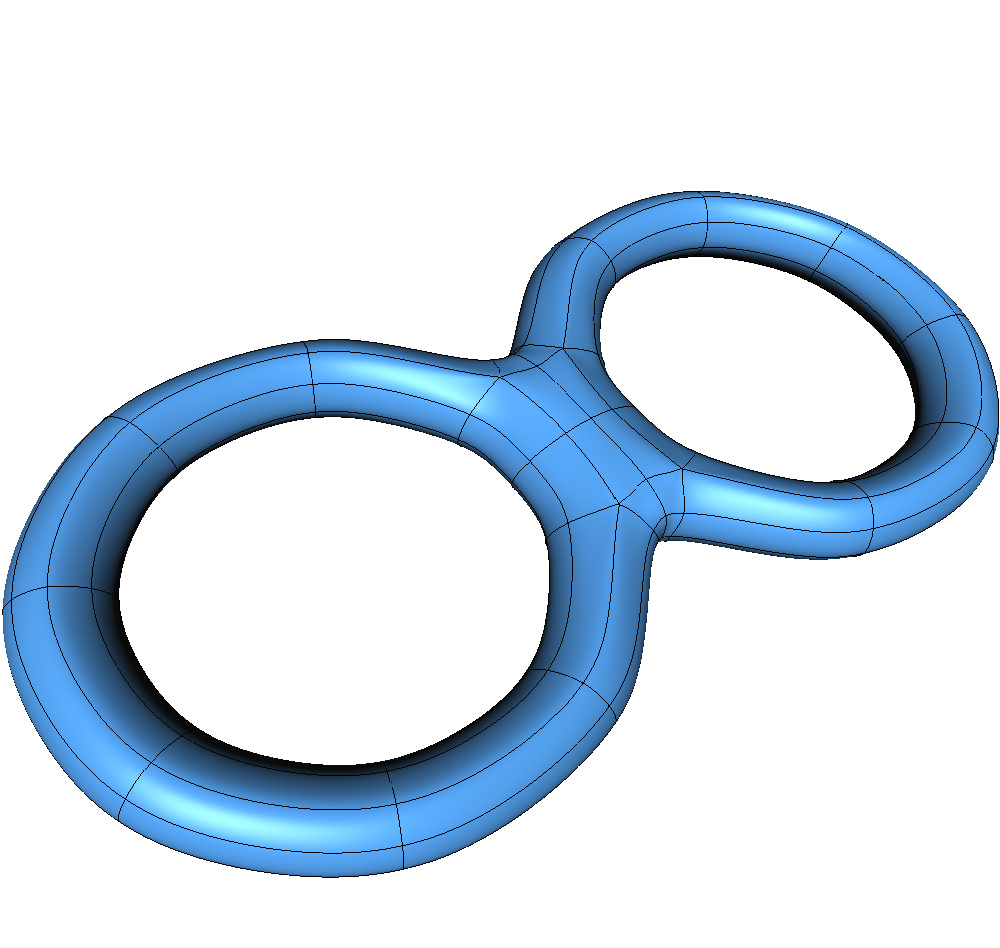}\label{fig:ex5_4}}}
\\
\subfigure[Input mesh]
{\includegraphics[width=0.95\textwidth/4]{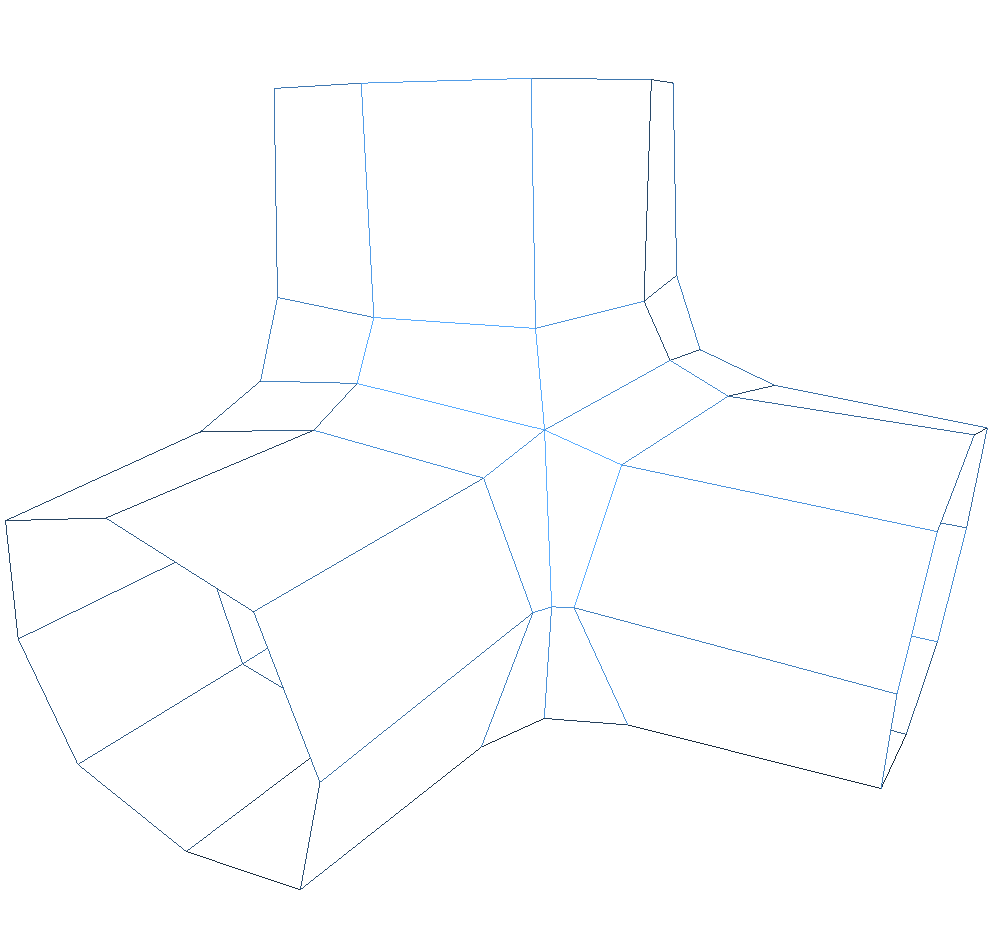}\label{fig:ex5_5}}
\hfill
\subfigure[Regular patches]
{\includegraphics[width=0.95\textwidth/4]{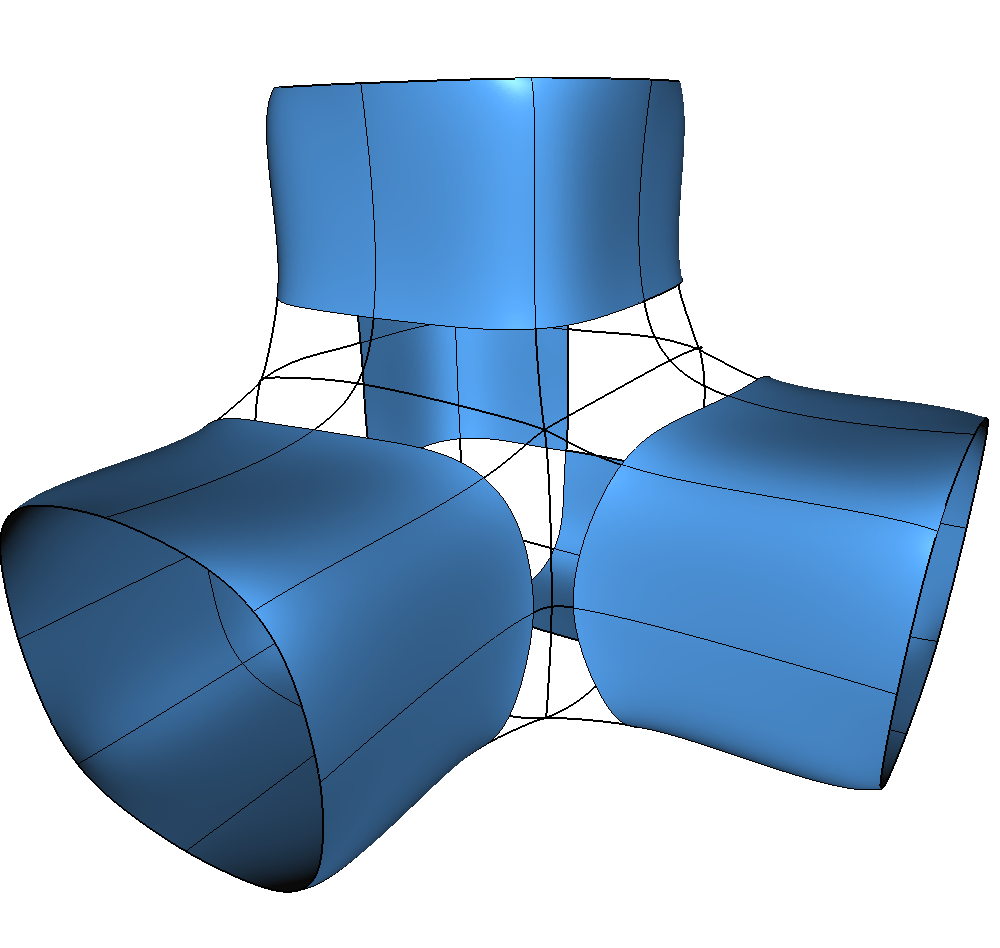}\label{fig:ex5_6}}
\hfill
\subfigure[Extraordinary patches]
{\includegraphics[width=0.95\textwidth/4]{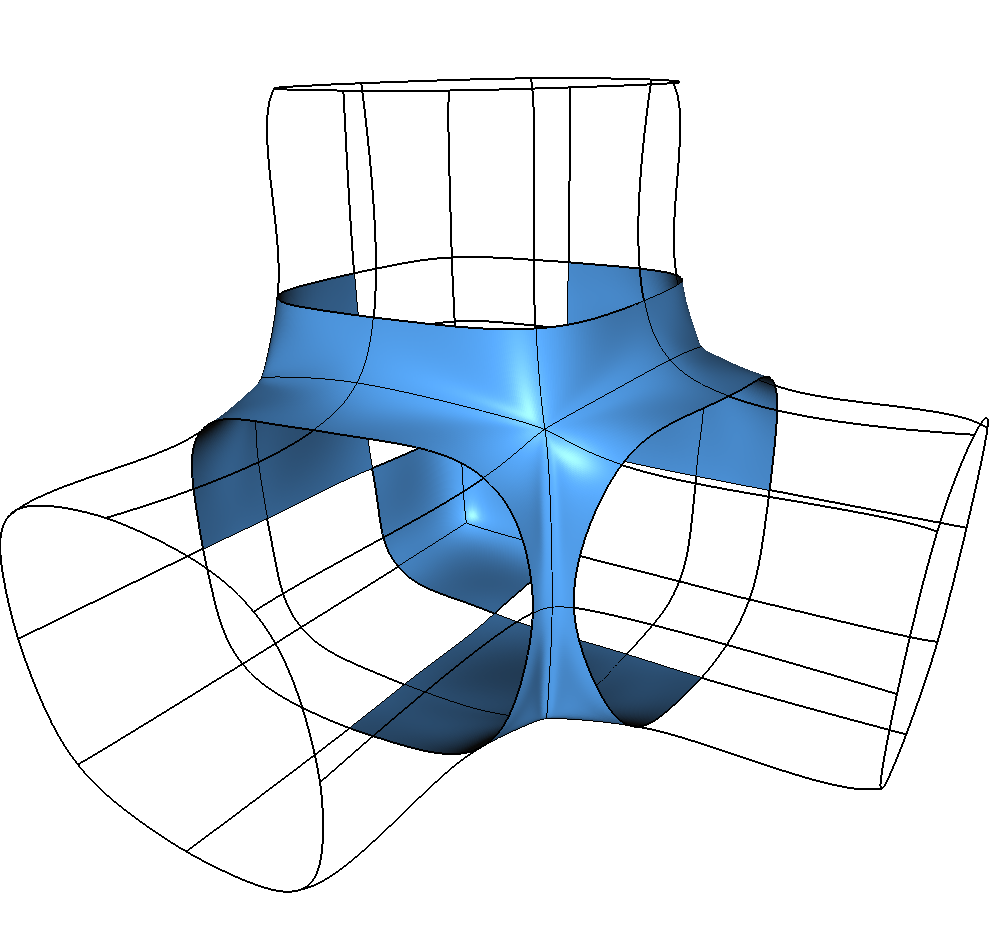}\label{fig:ex5_7}}
\hfill
\subfigure[Entire composite surface]
{\includegraphics[width=0.95\textwidth/4]{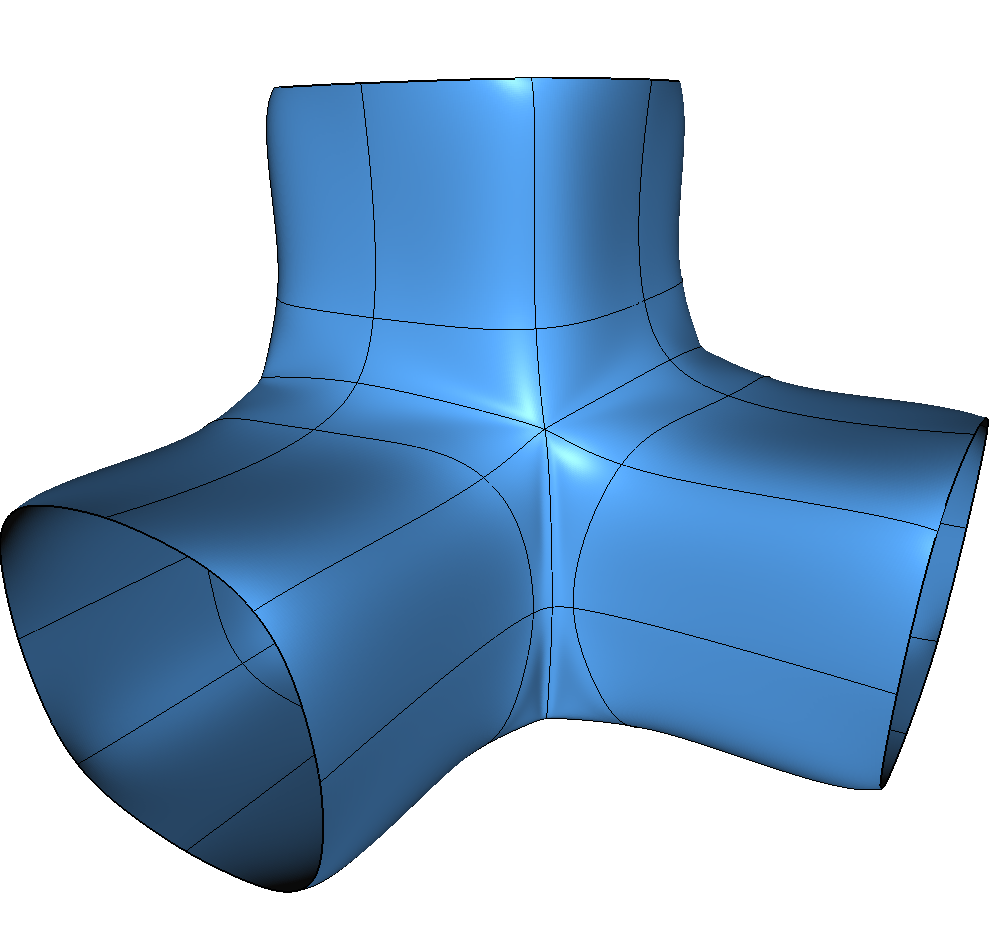}\label{fig:ex5_8}}
\caption{Surfaces from the class $D^5C^2P^2S^4$ with augmented parametrization.}\label{fig:ex5}
\end{figure}

\begin{figure}[t]
\centering
\subfigure[Mean parametrization]
{\includegraphics[width=0.95\textwidth/4]{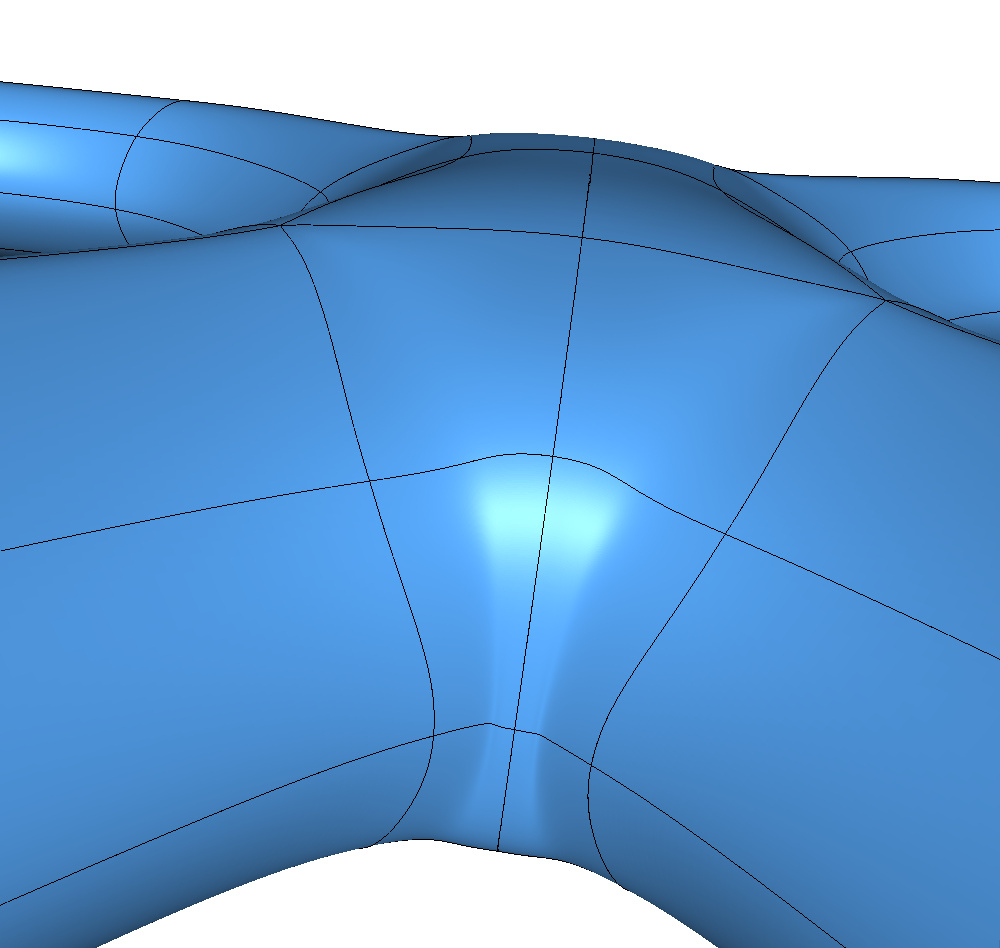}\label{fig:ex7_1}}
\hfill
\subfigure[Augmented parametrization]
{\includegraphics[width=0.95\textwidth/4]{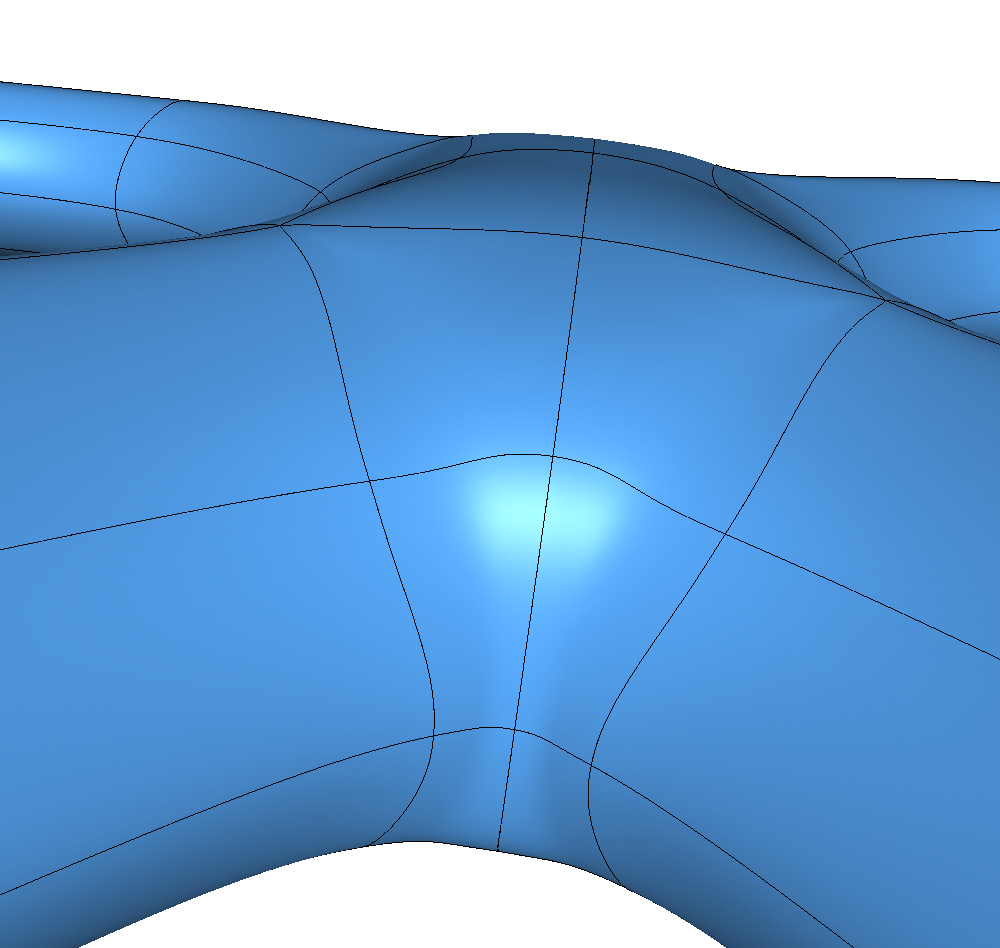}\label{fig:ex7_2}}
\hfill
\subfigure[Mean parametrization]
{\includegraphics[width=0.95\textwidth/4]{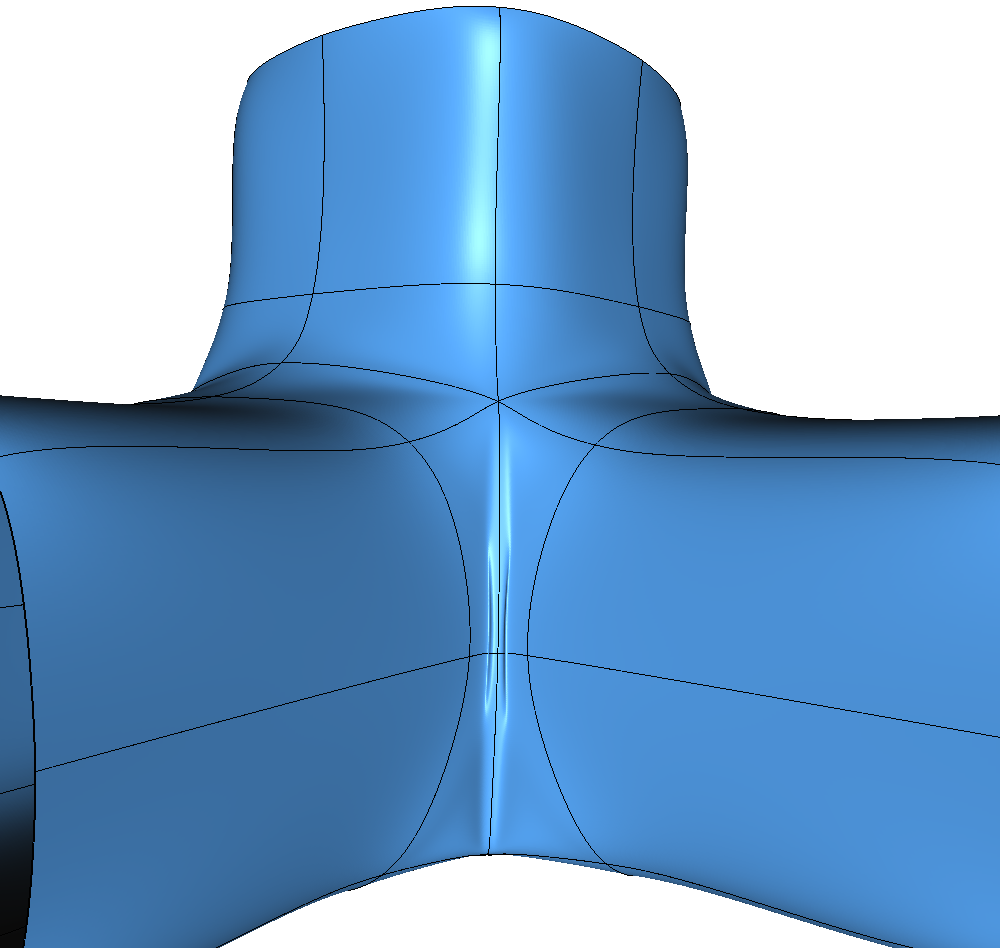}\label{fig:ex7_5}}
\hfill
\subfigure[Augmented parametrization]
{\includegraphics[width=0.95\textwidth/4]{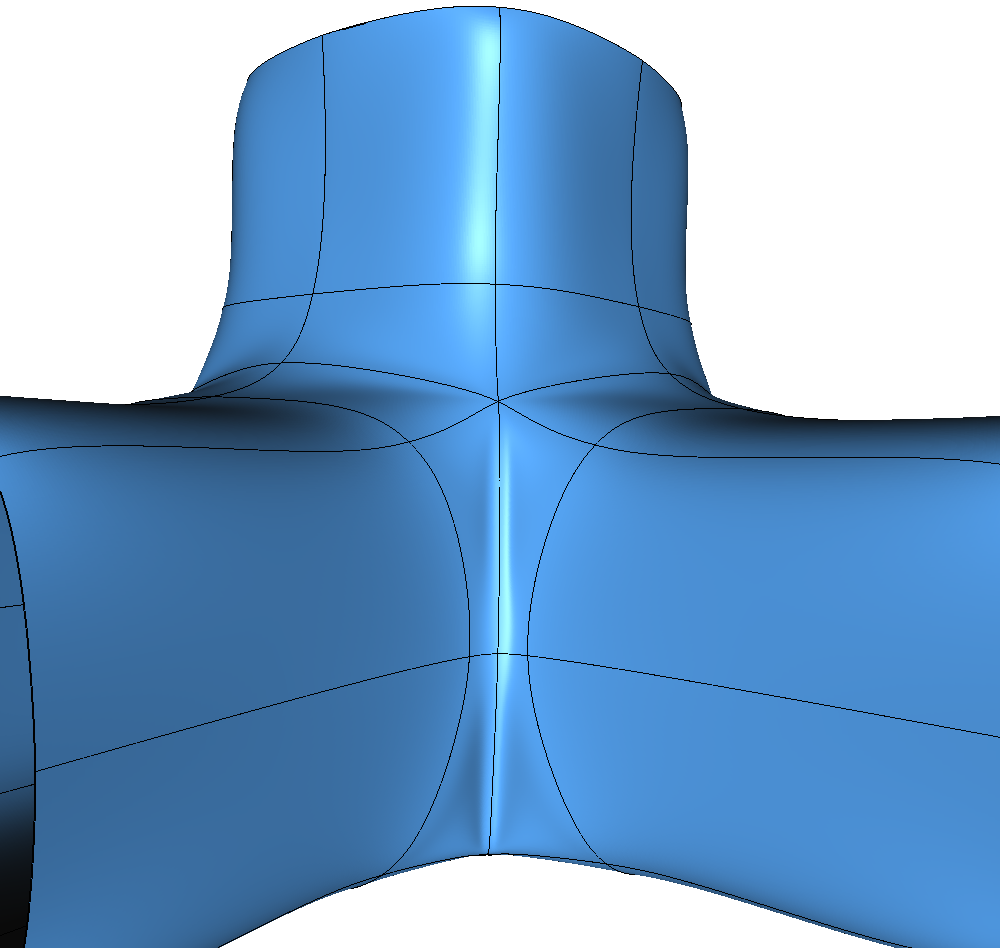}\label{fig:ex7_6}}\\
\subfigure[Mean curvature of \ref{fig:ex7_1}]
{\includegraphics[width=0.95\textwidth/4]{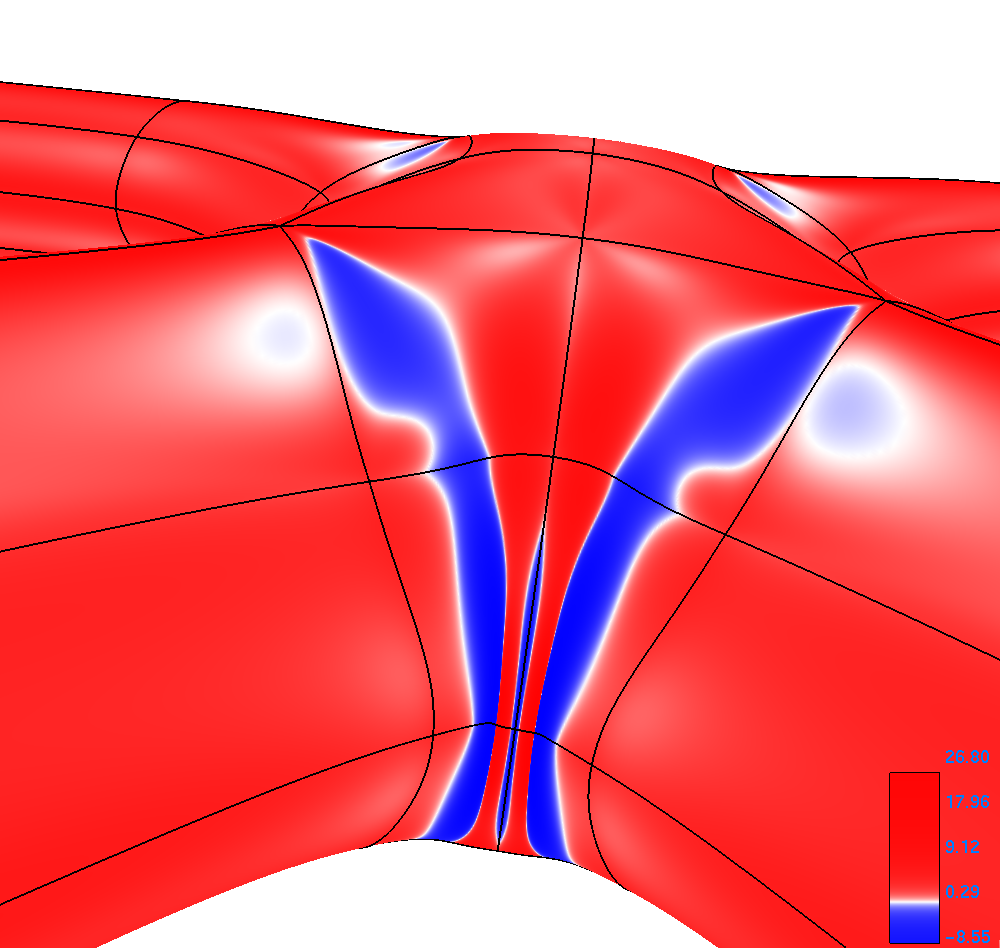}\label{fig:ex7_3}}
\hfill
\subfigure[Mean curvature of \ref{fig:ex7_2}]
{\raisebox{-0mm}{\includegraphics[width=0.95\textwidth/4]{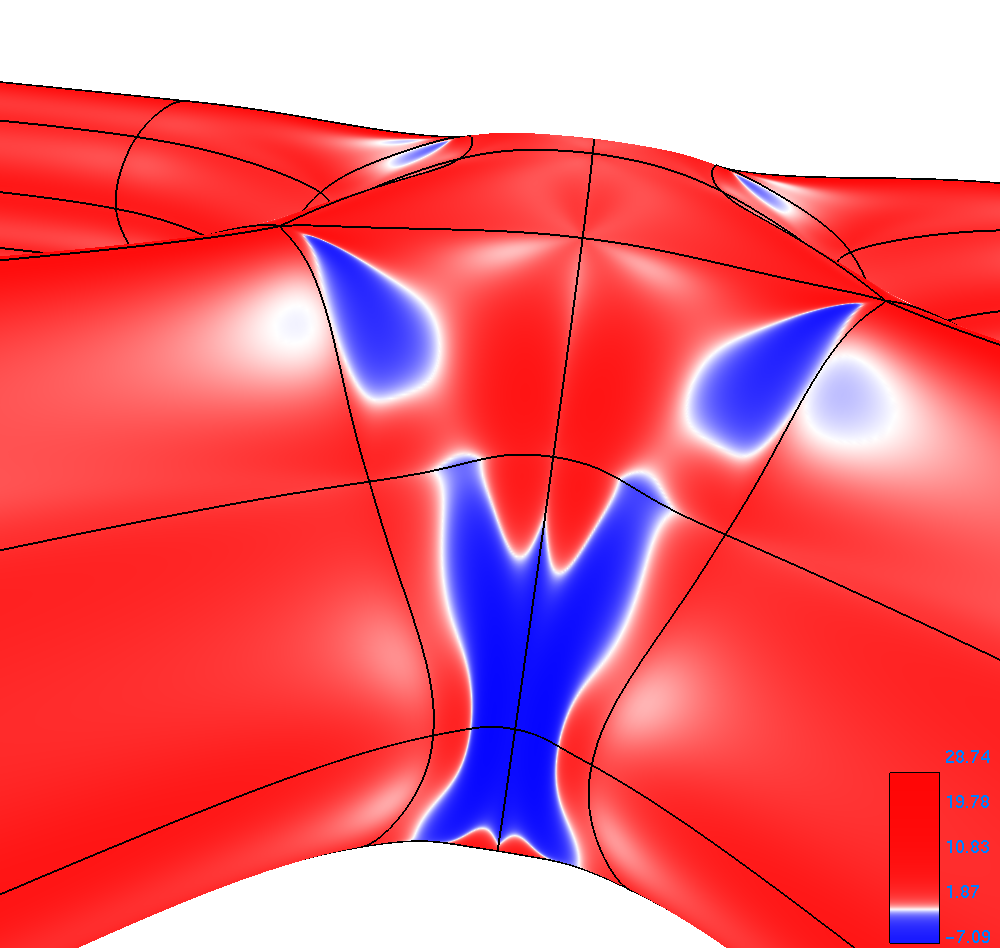}\label{fig:ex7_4}}}
\hfill
\subfigure[Mean curvature of \ref{fig:ex7_5}]
{\includegraphics[width=0.95\textwidth/4]{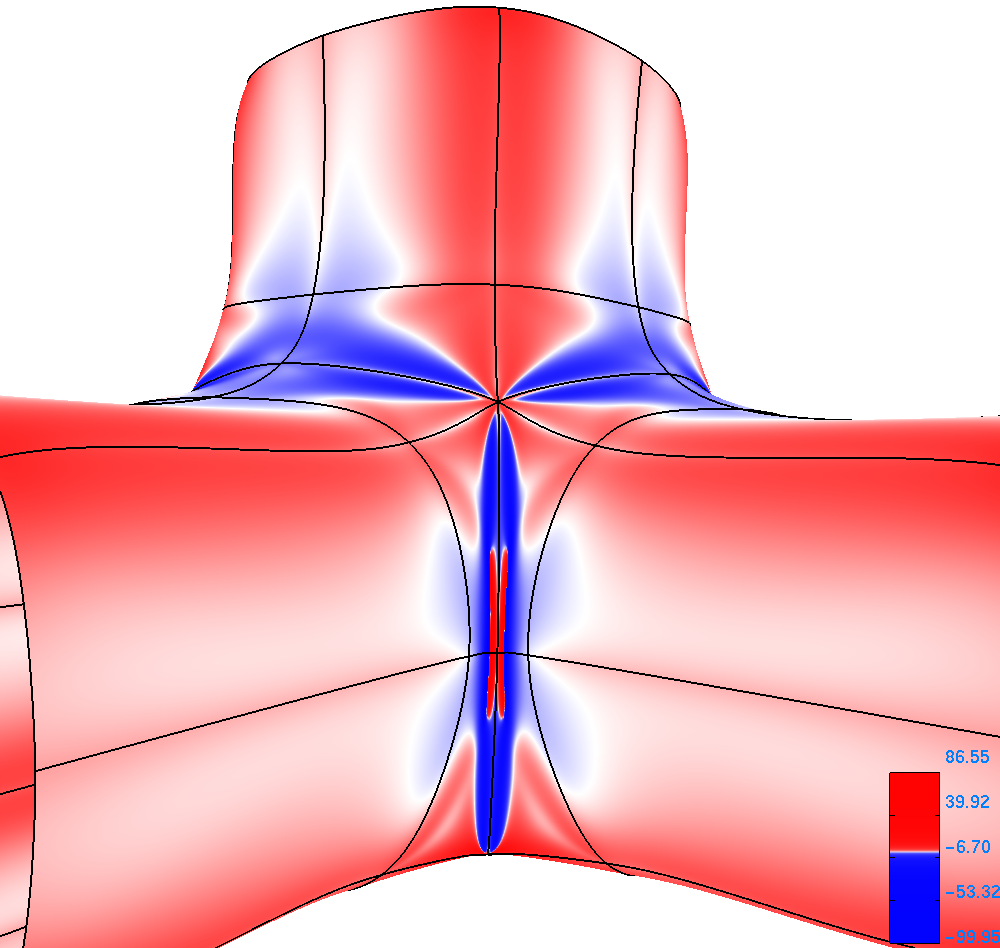}\label{fig:ex7_7}}
\hfill
\subfigure[Mean curvature of \ref{fig:ex7_6}]
{\includegraphics[width=0.95\textwidth/4]{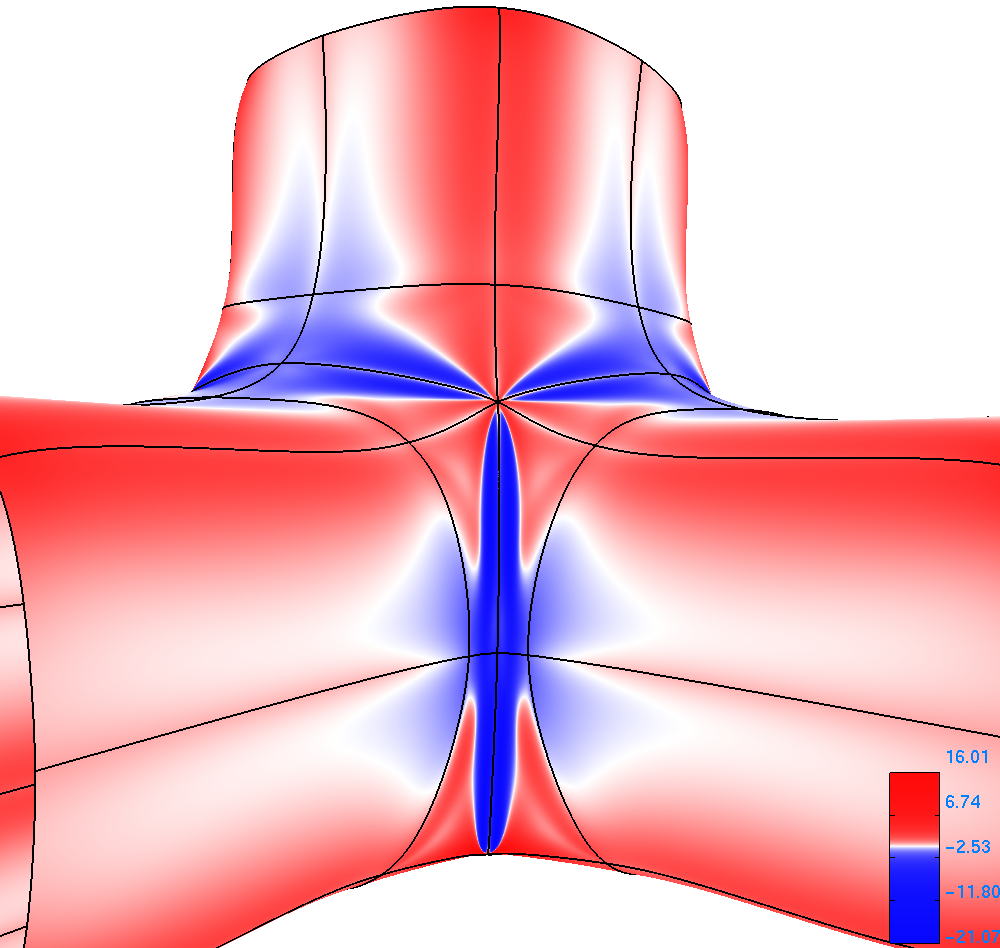}\label{fig:ex7_8}}
\caption{Zoom of the surfaces obtained from the meshes in Figure \ref{fig:ex5} with augmented and mean parametrization.}\label{fig:ex7}
\end{figure}

Finally, Figure \ref{fig:ex6} depicts some challenging meshes for interpolation and the corresponding augmented interpolatory surfaces.
The augmented surfaces are globally $G^2$, free of unwanted artifacts and overall approximate in a reasonable way the input mesh.
This is a highly nontrivial result for a local interpolation method.

\begin{figure}[t]
\centering
\subfigure[Input mesh]
{\includegraphics[width=0.85\textwidth/4]{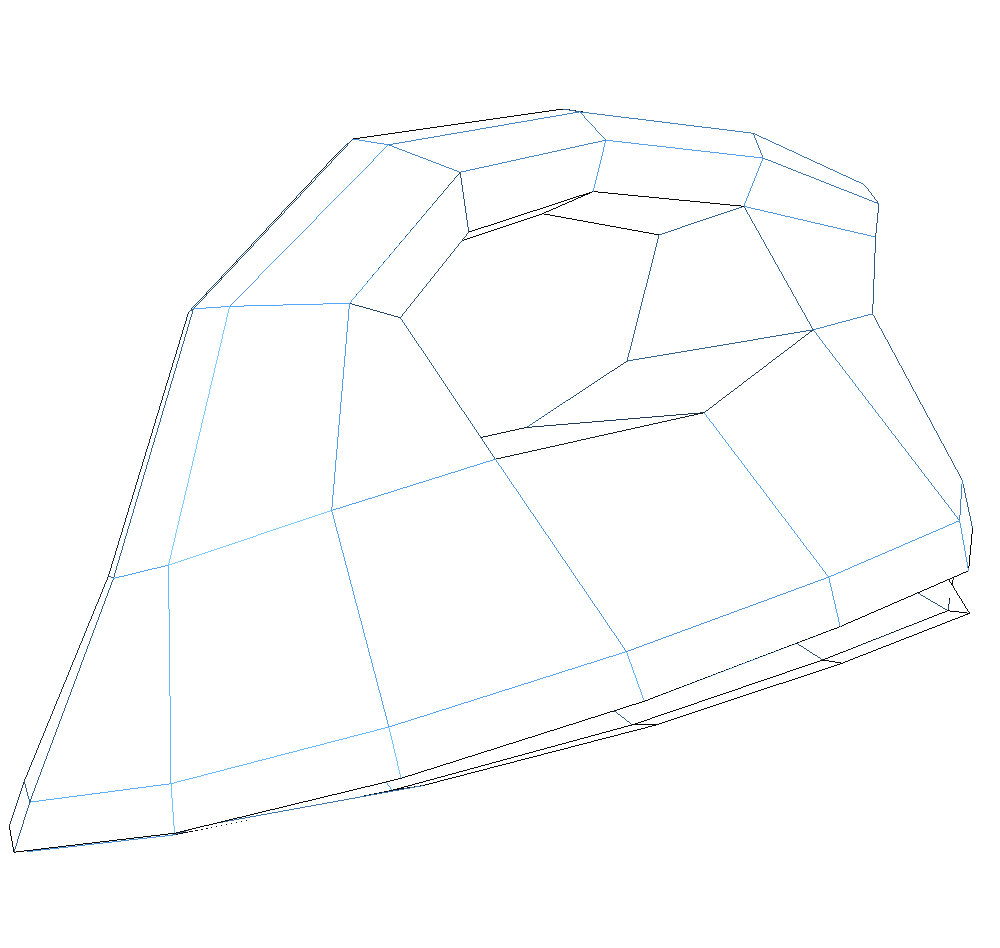}\label{fig:ex6_1}}
\hfill
\subfigure[Local interpolatory surface]
{\includegraphics[width=0.85\textwidth/4]{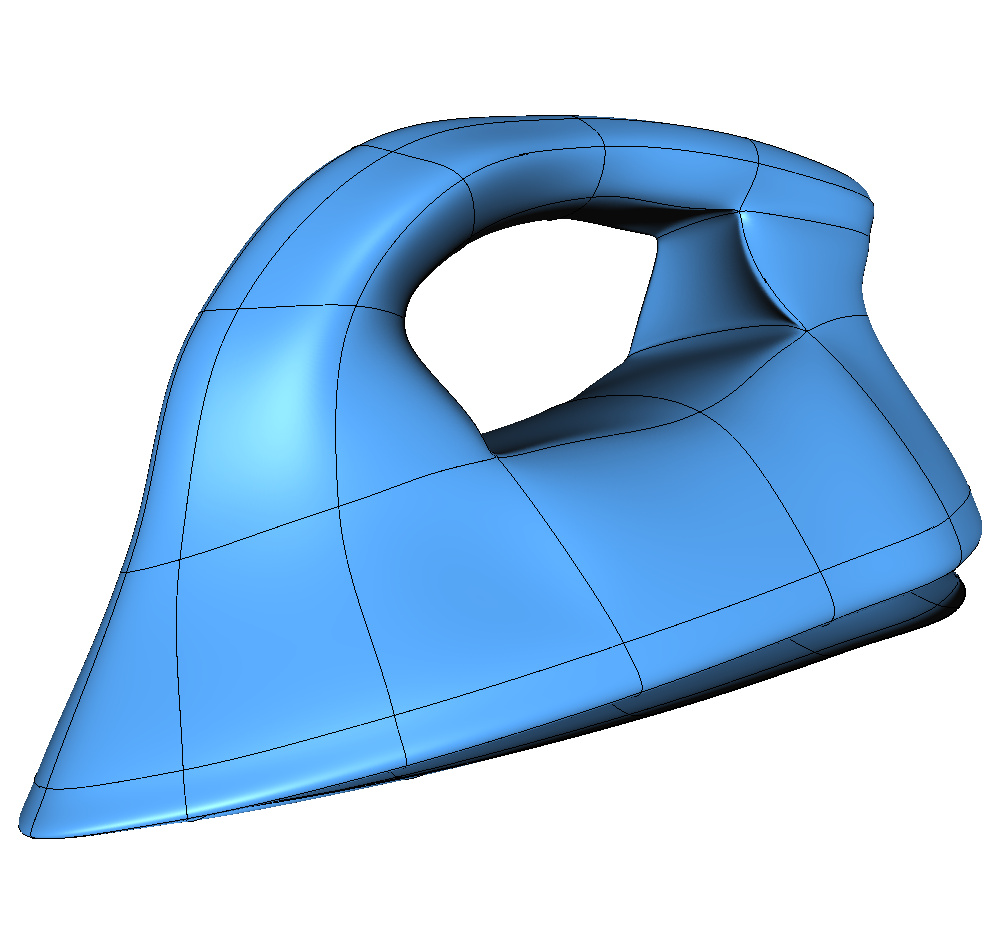}\label{fig:ex6_2}}
\hfill
\subfigure[Mean curvature]
{\includegraphics[width=0.85\textwidth/4]{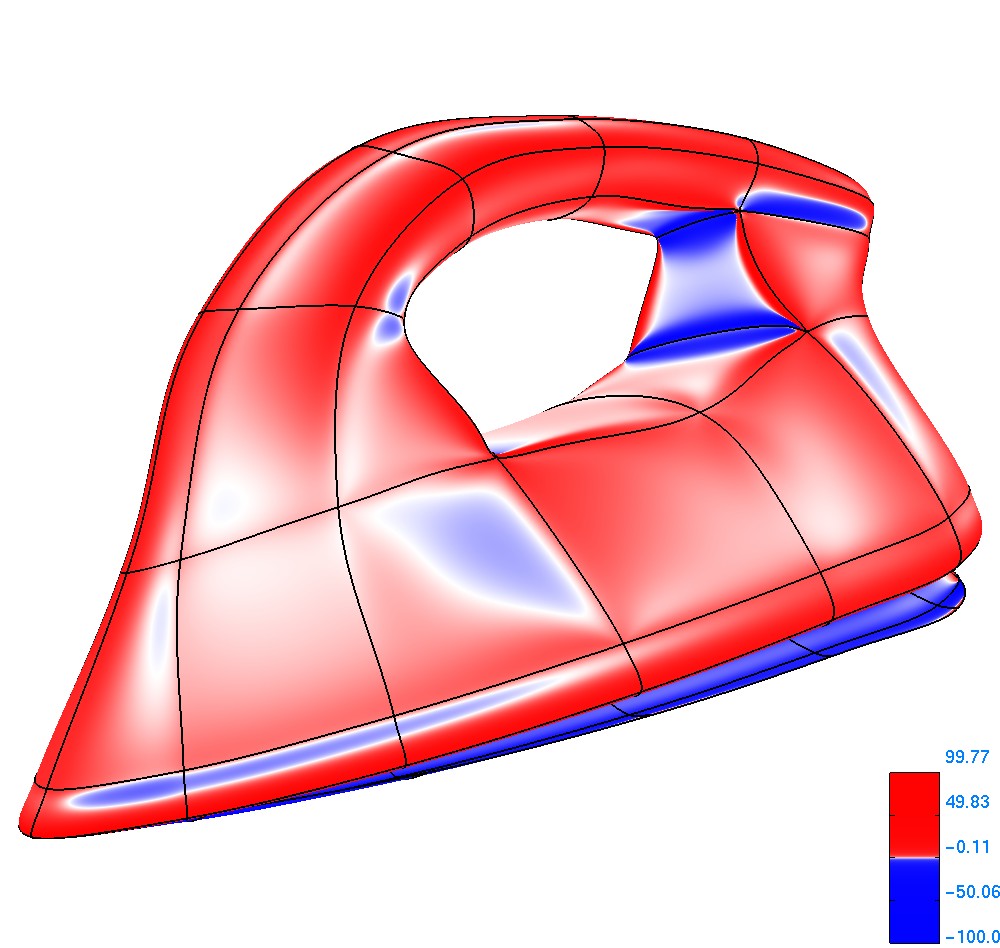}\label{fig:ex6_3}}
\hfill
\subfigure[Isophotes]
{\raisebox{-0mm}{\includegraphics[width=0.85\textwidth/4]{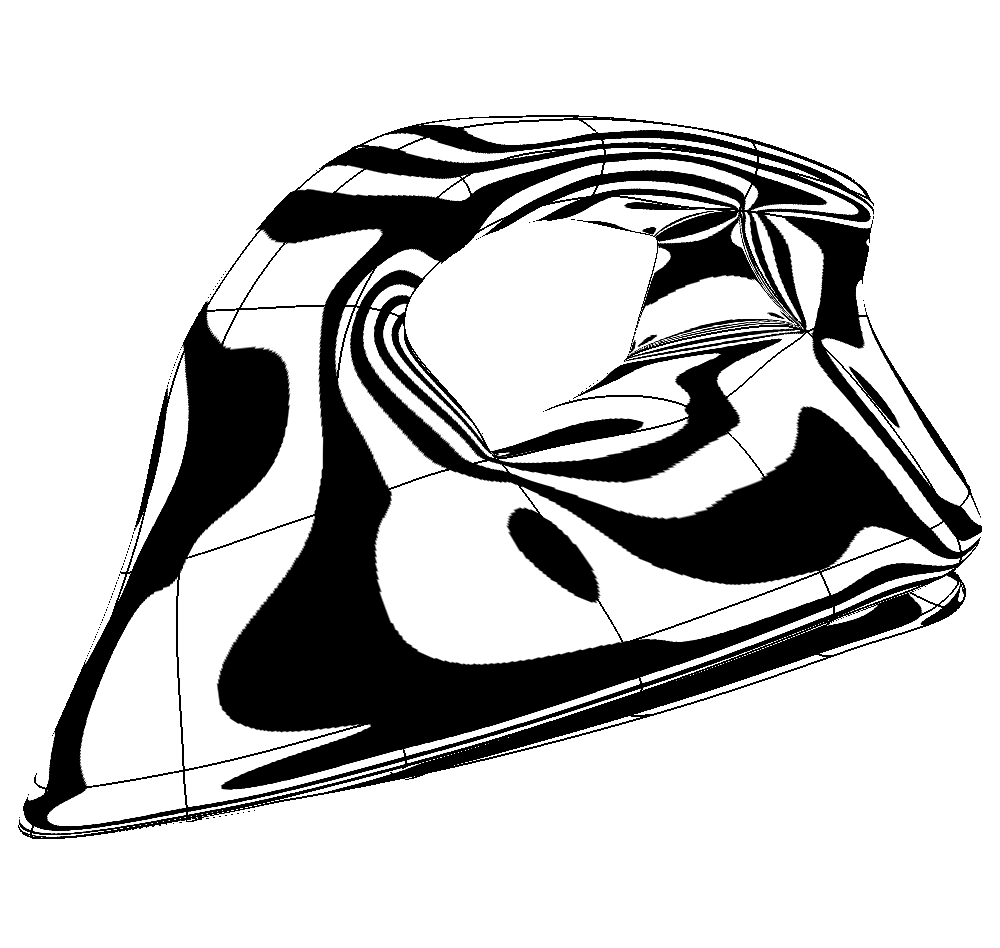}\label{fig:ex6_4}}}
\\
\subfigure[Input mesh]
{\includegraphics[width=0.95\textwidth/4]{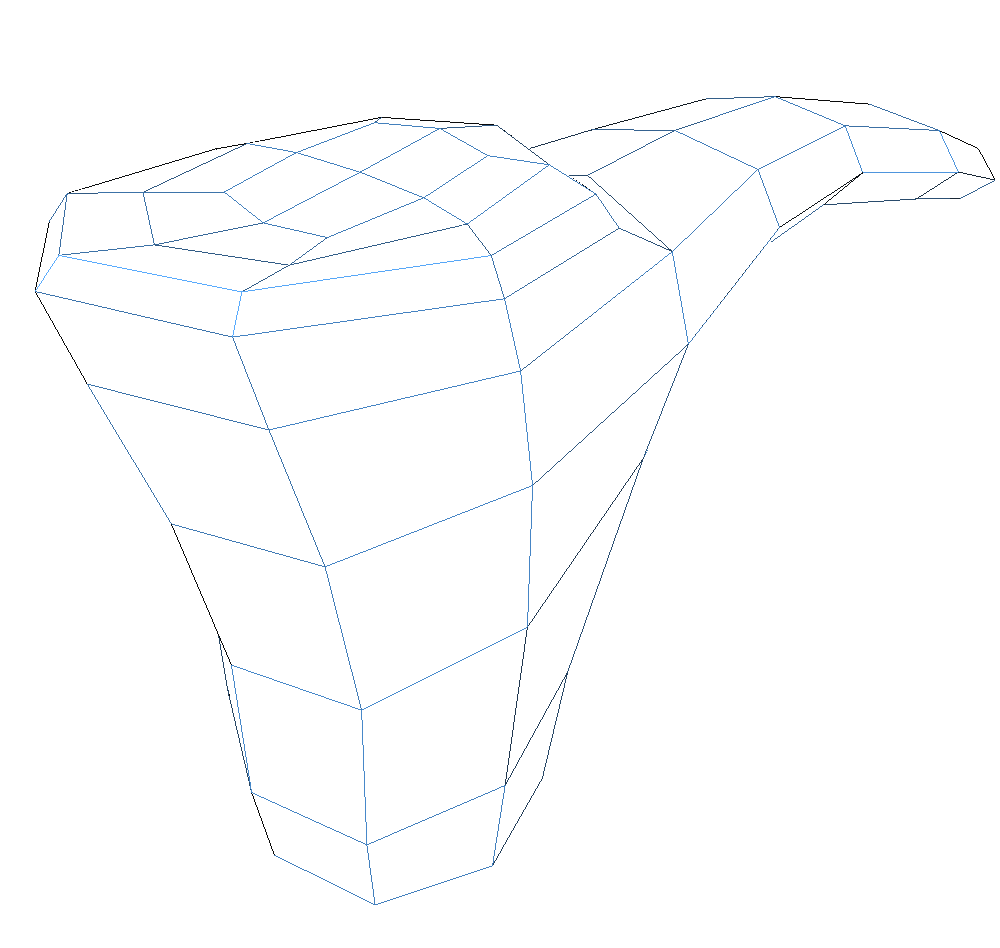}\label{fig:ex6_5}}
\hfill
\subfigure[Local interpolatory surface]
{\includegraphics[width=0.95\textwidth/4]{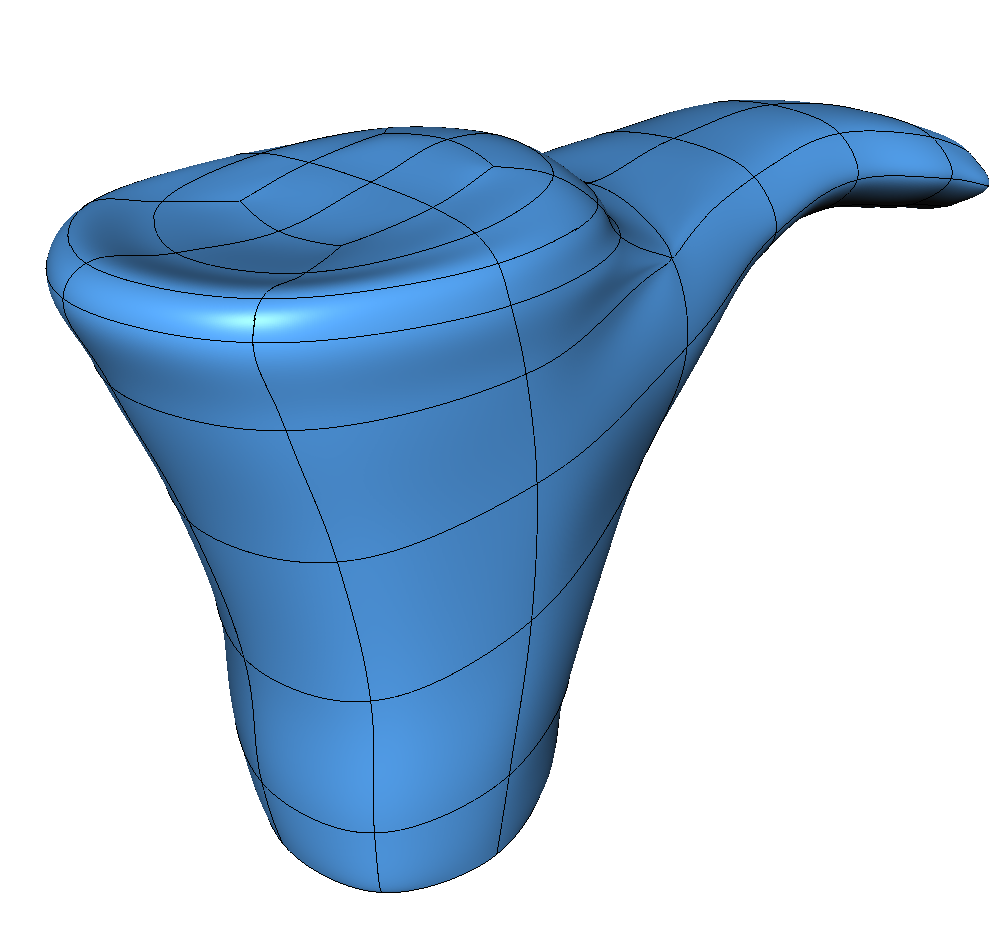}\label{fig:ex6_6}}
\hfill
\subfigure[Mean curvature]
{\includegraphics[width=0.95\textwidth/4]{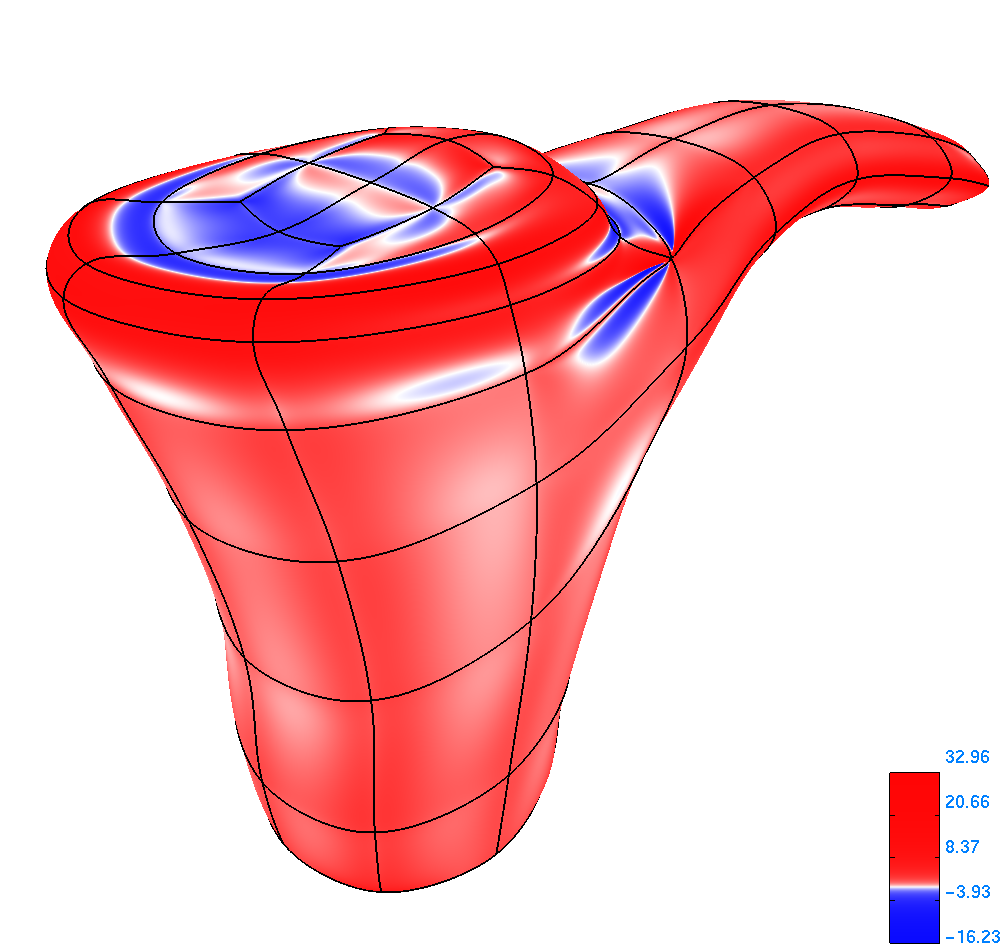}\label{fig:ex6_7}}
\hfill
\subfigure[Isophotes]
{\includegraphics[width=0.95\textwidth/4]{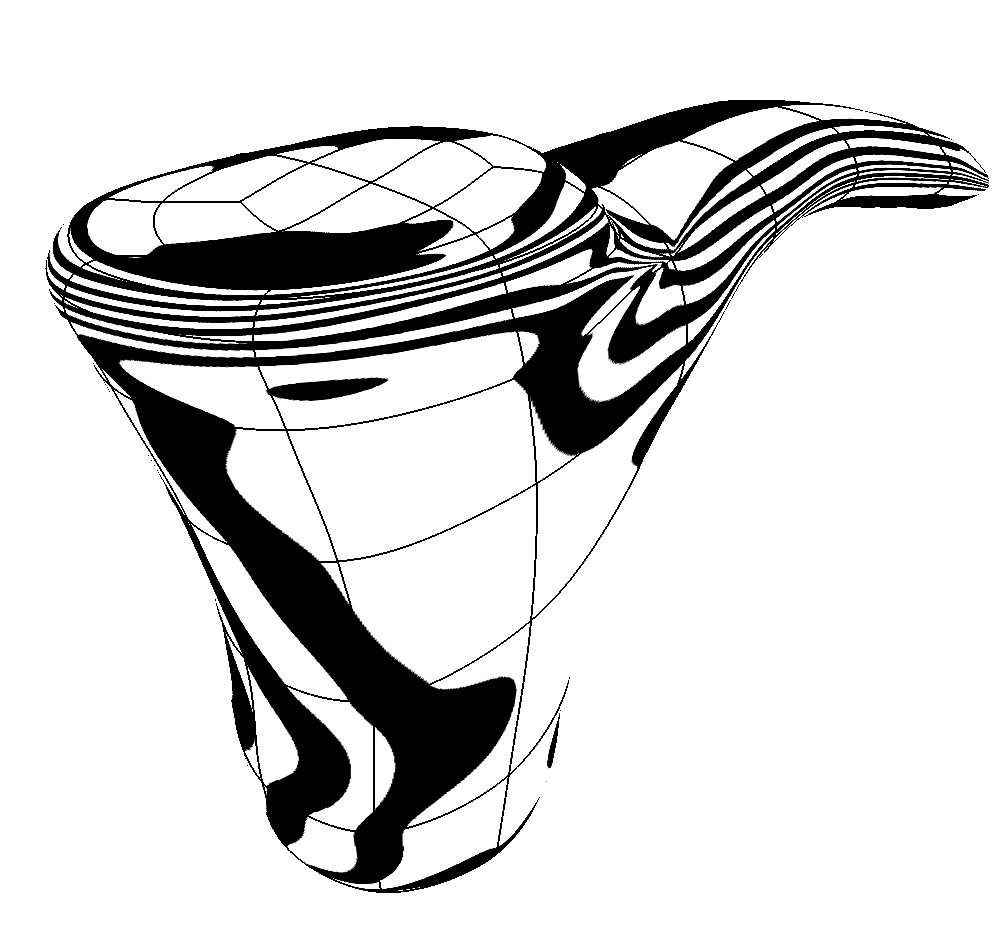}\label{fig:ex6_8}}
\caption{Surfaces from the class $D^5C^2P^2S^4$ with augmented parameterization.}\label{fig:ex6}
\end{figure}
\section{Conclusion}
\label{sec:conclusion}
We have presented a local construction for interpolatory composite surfaces, which is based on the use of local univariate spline interpolants having degree $g$, continuity $C^k$, polynomial reproduction degree $m$ and support width $w$.
Given as input a regular mesh, the composite surface has the same smoothness of the underlying class of univariate splines. Moreover, thanks to the augmented parametrization, each section curve can be parameterized independently and in the most appropriate way.
The section curves drive the shape of the surface, which turns out to be a good interpolant of the input mesh.

If a mesh contains extraordinary vertices, then the above local approach applies in the regular mesh regions. To patch the extraordinary regions we have proposed a modified form of Coons-Gregory patches that join with $G^1$ or $G^2$ continuity the regular portion of surface and, with respect to the latter, hold similar properties in terms of avoiding artifacts and poor shape quality.

We have demonstrated the effectiveness of the proposed technique by several application examples concerning both regular and extraordinary meshes.

The surface construction takes advantage from the local nature of the method, which allows for a combined use of regular an extraordinary augmented patches. Nevertheless, patching the extraordinary regions of a mesh is an independent challenging problem and we leave to future research the study of other possible approaches that could be paired with the regular patches.

Moreover, in principle, the augmented parametrization is not restricted to regular meshes, nor it can only be associated with the local interpolatory splines considered in this work.
Therefore, another interesting subject for future studies is to consider how other methods of surface generation may benefit from the augmented parametrization.


\section*{Acknowledgements}
This research was supported by the European Eurostars project NIIT4CAD \url{http://eurostars.unibo.it/} and by the Italian INdAM National Group for Scientific Calculus (GNCS).


\bibliographystyle{model3-num-names} 
\bibliography{patching} 

\appendix
\section{Some classes of fundamental functions}
\label{app:fund_eqs}

\noindent
The following fundamental spline functions are obtained following the approach reported in \cite{BCR13a}.

\subsection{Fundamental functions of the class \texorpdfstring{$D^3C^1P^2S^4$}{D3C1P2S4}}
Recalling our notation $d_i = x_{i+1}-x_i$, in the interval $[-d_{i-2}-d_{i-1},d_i+d_{i+1}]$ the expression of the fundamental function $\psi_i$ is given by
\begin{equation}
\psi_i(x) =
\begin{cases}
\displaystyle{\frac{\left(d_{i-1}+x\right) \left(d_{i-2}+d_{i-1}+x\right)^2}{d_{i-2} d_{i-1} \left(d_{i-2}+d_{i-1}\right)},}
&
\displaystyle{-d_{i-2}-d_{i-1}\leq x<-d_{i-1},} \\[2ex]
\displaystyle{\frac{\left(d_{i-1}+x\right) \left(d_i \left(-x^2+d_{i-1}d_{i-2}+d_{i-1}^2\right)-x \left(d_{i-2}+d_{i-1}\right) \left(d_{i-1}+x\right)\right)}{d_{i-1}^2 d_i \left(d_{i-2}+d_{i-1}\right)},}
&
\displaystyle{-d_{i-1}\leq x<0,} \\[2ex]
\displaystyle{\frac{\left(d_i-x\right) \left(-x^2 (d_{i-1}+d_i+d_{i+1})+x d_i \left(d_i+d_{i+1}\right)+d_{i-1} d_i \left(d_i+d_{i+1}\right)\right)}{d_{i-1} d_i^2 \left(d_i+d_{i+1}\right)},}
&
\displaystyle{0\leq x<d_i,} \\[2ex]
\displaystyle{\frac{\left(d_i-x\right) \left(d_i+d_{i+1}-x\right)^2}{d_i d_{i+1} \left(d_i+d_{i+1}\right)},}
& \displaystyle{d_i\leq x\leq d_i+d_{i+1}.}
\end{cases}
\end{equation}
The local parameter vector associated with $[x_s,x_{s+1}]$ is $\bd=\left(d_{s-1},d_s,d_{s+1}\right)$
and for any $x\in[0,d_s]$ the four nonzero fundamental functions $\psi_{i}$, $i=s-1,\dots,s+2$ have the expression
\begin{equation}\label{eq:CatmullRom}
\begin{aligned}
\psi_{s-1}(x;\bd) &= \displaystyle{-\frac{x \left(x-d_{s}\right)^2}{d_{s-1} d_{s} \left(d_{s-1}+d_{s}\right)}}, \\
\psi_{s}(x;\bd) &= \displaystyle{\frac{1}{d_{s}^2}\left(x-d_{s}\right) \left(\frac{x^2}{d_{s}+d_{s+1}}+\frac{x \left(x-d_{s}\right)}{d_{s-1}}-d_{s}\right)}, \\
\psi_{s+1}(x;\bd) &= \displaystyle{\frac{1}{d_{s}^2}x \left(\frac{d_{s} \left(d_{s-1}+2 x\right)-x^2}{d_{s-1}+d_{s}}-\frac{x \left(x-d_{s}\right)}{d_{s+1}}\right)}, \\
\psi_{s+2}(x;\bd) &= \displaystyle{\frac{x^2 \left(x-d_{s}\right)}{d_{s} d_{s+1} \left(d_{s}+d_{s+1}\right)}}.
\end{aligned}
\end{equation}

\subsection{Fundamental functions of the class \texorpdfstring{$D^5C^2P^2S^4$}{D5C2P2S4}}
Recalling our notation $d_i = x_{i+1}-x_i$, in the interval $[-d_{i-2}-d_{i-1},d_i+d_{i+1}]$ the expression of the fundamental function $\psi_i$ is given by
\begin{equation}
\psi_i(x) =
\begin{cases}
\text{\Large$
-\frac{\left(d_{i-1}+x\right) \left(d_{i-2}+d_{i-1}+x\right)^3 \left(-d_{i-2}+2 d_{i-1}+2 x\right)}{d_{i-2}^3 d_{i-1} \left(d_{i-2}+d_{i-1}\right)},$}
&
\hspace{-0.2cm} \displaystyle{-d_{i-2}-d_{i-1}\leq x<-d_{i-1},} \\[2ex]
\text{\Large$\frac{d_i \left(d_{i-1}+x\right) \left(3 x^3 d_{i-1}+d_{i-1}^3 \left(d_{i-2}+d_{i-1}\right)+2 x^4\right)-x \left(d_{i-2}+d_{i-1}\right) \left(d_{i-1}-2 x\right) \left(d_{i-1}+x\right)^3}{d_{i-1}^4 d_i \left(d_{i-2}+d_{i-1}\right)},$}
& \displaystyle{-d_{i-1}\leq x<0,} \\[2ex]
\text{\Large$\frac{\left(d_i-x\right) \left(\left(d_{i-1}+d_i+d_{i+1}\right)(2 x^4 -3 x^3 d_i)+x d_i^3 \left(d_i+d_{i+1}\right)+d_{i-1} d_i^3 \left(d_i+d_{i+1}\right)\right)}{d_{i-1} d_i^4 \left(d_i+d_{i+1}\right)},$}
& \displaystyle{0\leq x<d_i,} \\[2ex]
\text{\Large$-\frac{\left(x-d_i\right) \left(-2 d_i+d_{i+1}+2 x\right) \left(d_i+d_{i+1}-x\right)^3}{d_i d_{i+1}^3 \left(d_i+d_{i+1}\right)},$}
& \displaystyle{d_i\leq x\leq d_i+d_{i+1}.}\\[2ex]
\end{cases}
\end{equation}
The local parameter vector associated with $[x_s,x_{s+1}]$ is $\bd=\left(d_{s-1},d_s,d_{s+1}\right)$
and for any $x\in[0,d_s]$ the four nonzero fundamental functions $\psi_{i}$, $i=s-1,\dots,s+2$ have the expression
\begin{equation}\label{eq:D5C2P2S4}
\begin{aligned}
\psi_{s-1}(x;\bd) &= \displaystyle{\frac{x \left(x-d_{s}\right)^3 \left(d_{s}+2 x\right)}{d_{s-1} d_{s}^3 \left(d_{s-1}+d_{s}\right)}}, \\
\psi_{s}(x;\bd) &= \displaystyle{\frac{\left(d_{s}-x\right) \left(d_{s-1} \left(-3 x^3 d_{s}+d_{s}^4+d_{s}^3 d_{s+1}+2 x^4\right)+x \left(d_{s}+d_{s+1}\right) \left(d_{s}+2 x\right) \left(x-d_{s}\right)^2\right)}{d_{s-1} d_{s}^4 \left(d_{s}+d_{s+1}\right)},} \\
\psi_{s+1}(x;\bd) &= \displaystyle{\frac{1}{d_{s}^4}x \left(\frac{x^2 \left(2 x-3 d_{s}\right) \left(x-d_{s}\right)}{d_{s+1}}+\frac{-5 x^3 d_{s}+3 x^2 d_{s}^2+d_{s}^3 \left(d_{s-1}+x\right)+2 x^4}{d_{s-1}+d_{s}}\right)}, \\
\psi_{s+2}(x;\bd) &= \displaystyle{-\frac{x^3 \left(2 x-3 d_{s}\right) \left(x-d_{s}\right)}{d_{s}^3 d_{s+1} \left(d_{s}+d_{s+1}\right)}}.
\end{aligned}
\end{equation}

\end{document}